\newtheorem{Theorem}[equation]{Theorem}
\newtheorem{Corollary}[equation]{Corollary}
\newtheorem{Lemma}[equation]{Lemma}
\theoremstyle{definition}
\newtheorem{Definition}[equation]{Definition}
\theoremstyle{remark}
\newtheorem{Remark}[equation]{Remark}
\numberwithin{equation}{section}
\DeclareMathOperator{\ev}{ev}
\DeclareMathOperator{\id}{id}
\DeclareMathOperator{\ad}{ad}
\newcommand{\ve}{\varepsilon}
\begin{document}
\title{Two Homomorphisms from the affine Yangian associated with $\widehat{\mathfrak{sl}}(n)$ to the affine Yangian associated with $\widehat{\mathfrak{sl}}(n+1)$}
\author{Mamoru Ueda\thanks{mueda@ualberta.ca}}
\affil{Department of Mathematical and Statistical Sciences, University of Alberta, 11324 89 Ave NW, Edmonton, AB T6G 2J5, Canada}
\date{}
\maketitle
\begin{abstract}
We construct a homomorphism from the affine Yangian $Y_{\hbar,\ve+\hbar}(\widehat{\mathfrak{sl}}(n))$ to the affine Yangian $Y_{\hbar,\ve}(\widehat{\mathfrak{sl}}(n+1))$ which is different from the one in \cite{U8}. By using this homomorphism, we give a homomorphism from $Y_{\hbar,\ve}(\widehat{\mathfrak{sl}}(n))\otimes Y_{\hbar,\ve+n\hbar}(\widehat{\mathfrak{sl}}(m))$ to $Y_{\hbar,\ve}(\widehat{\mathfrak{sl}}(m+n))$. As an application, we construct a homomorphism from the affine Yangian $Y_{\hbar,\ve+n\hbar}(\widehat{\mathfrak{sl}}(m))$ to the centralizer algebra of the pair of affine Lie algebras $(\widehat{\mathfrak{gl}}(m+n),\widehat{\mathfrak{sl}}(n))$ and the coset vertex algebra of the pair of rectangular $W$-algebras $\mathcal{W}^k(\mathfrak{gl}(2m+2n),(2^{m+n}))$ and $\mathcal{W}^{k+m}(\mathfrak{sl}(2n),(2^{n}))$.
\end{abstract}

\textbf{keyword}: Yangian, evaluation map, $W$-algebra, coset

\section{Introduction}
The Yangian $Y_\hbar(\mathfrak{g})$ associated with a finite dimensional simple Lie algebra $\mathfrak{g}$ was introduced by Drinfeld (\cite{D1}, \cite{D2}). The Yangian $Y_\hbar(\mathfrak{g})$ is a quantum group which is a deformation of the current algebra $\mathfrak{g}\otimes\mathbb{C}[z]$. The Yangian has several presentations: the RTT presentation, the current presentation, the Drinfeld $J$ presentation and so on. 

By using the current presentation of the Yangian, we can extend the definition of the Yangian $Y_\hbar(\mathfrak{g})$ to a symmetrizable Kac-Moody Lie algebra $\mathfrak{g}$. The affine Yangian $Y_{\hbar,\ve}(\widehat{\mathfrak{sl}}(n))$ was first introduced by Guay (\cite{Gu2} and \cite{Gu1}). The affine Yangian $Y_{\hbar,\ve}(\widehat{\mathfrak{sl}}(n))$ is a 2-parameter affine Yangian associated with $\widehat{\mathfrak{sl}}(n)$ and a quantum group which is a deformation of the universal enveloping algebra of the universal central extension of $\mathfrak{sl}(n)[u^{\pm1},v]$. Recently, the affine Yangian is applied to the study of a $W$-algebra. The $W$-algebra $\mathcal{W}^k(\mathfrak{g},f)$ is a vertex algebra associated with a finite dimensional reductive Lie algebra $\mathfrak{g}$, a nilpotent element $f\in\mathfrak{g}$ and a complex number $k$. In \cite{U4}, we gave a relationship between the affine Yangian and a rectangular $W$-algebra. The rectangular $W$-algebra $\mathcal{W}^k(\mathfrak{gl}(ln),(l^n))$ is a $W$-algebra associated with $\mathfrak{gl}(ln)$ and a nilpotent element of type $(l^n)$.
The author \cite{U4} gave a surjective homomorphism 
\begin{equation*}
\Phi^n\colon Y_{\hbar,\ve}(\widehat{\mathfrak{sl}}(n))\to\mathcal{U}(\mathcal{W}^k(\mathfrak{gl}(ln),(l^n))),
\end{equation*}
where $\mathcal{U}(\mathcal{W}^k(\mathfrak{gl}(ln),(l^n)))$ is the universal enveloping algebra of $\mathcal{W}^k(\mathfrak{gl}(ln),(l^n))$.

One of the difference between finite Yangians and affine Yangians is the existence of the RTT presentation.
By using the RTT presentation, we find that there exists a natural embedding $\Psi_1^f$ from the finite Yangian associated with $\mathfrak{gl}(n)$ to the finite Yangian associated with $\mathfrak{gl}(n+m)$. In the affine setting, the author \cite{U8} constructed a homomorphism corresponding to $\Psi_1^f$:
\begin{equation*}
\Psi_1\colon Y_{\hbar,\ve}(\widehat{\mathfrak{sl}}(n))\to \widetilde{Y}_{\hbar,\ve}(\widehat{\mathfrak{sl}}(m+n)),
\end{equation*}
where $\widetilde{Y}_{\hbar,\ve}(\widehat{\mathfrak{sl}}(m+n))$ is the degreewise completion of $Y_{\hbar,\ve}(\widehat{\mathfrak{sl}}(m+n))$. 

In this article, we construct a homomorphism 
\begin{equation*}
\Psi_2\colon Y_{\hbar,\ve+n\hbar}(\widehat{\mathfrak{sl}}(m))\to \widetilde{Y}_{\hbar,\ve}(\widehat{\mathfrak{sl}}(m+n)),
\end{equation*}
where $\Psi_2(Y_{\hbar,\ve+n\hbar}(\widehat{\mathfrak{sl}}(m)))$ and $\Psi_1(Y_{\hbar,\ve}(\widehat{\mathfrak{sl}}(n)))$ are commutative with each other. 

One of the applications of homomorphisms $\Psi_1$ and $\Psi_2$ is a relatioship between affine Yangians and $W$-algebras. Brundan-Kleshchev \cite{BK0} gave the parabolic presentation of the finite Yangian of type $A$. By using the parabolic presentation, Brundan-Kleshchev \cite{BK} wrote down  a finite $W$-algebra of type $A$ as a quotient algebra of the shifted Yangian. In affine setting, Crutzig-Diaconescu-Ma \cite{CE} conjectured that an action of an iterated $W$-algebra of type $A$ on the equivariant homology space of the affine Laumon space will be given through an action of an shifted affine Yangian constructed in \cite{FT}. There exists another version of this conjecture which notes the existence of a surjective homomorphism from the shifted affine Yangian to the universal enveloping algebra of a $W$-algebra of type $A$ if we change the definition of the shifted affine Yangian properly. The image of the homomorphism $\Psi_1\otimes\Psi_2$ is corresponding to the Levi subalgebra of the finite Yangian, which is defined by using the parabolic presentation. Moreover, by using $\Psi_1$ and $\Psi_2$, we have shown that there exist a homomorphism from the Levi subalgebras of the affine Yangian to the universal enveloping algebra of a $W$-algebra of type $A$ in the sequence of this article (see \cite{U11} and \cite{U12}). We expect that the homomorphisms $\Psi_1$ and $\Psi_2$ will lead the new presentation of the affine Yangian and be helpful for the resolution of the another version of the Crutzig-Diaconescu-Ma's conjecture.

Another application of  $\Psi_1$ and $\Psi_2$ is a centralizer algebra of $\widehat{\mathfrak{gl}}(n)$ and a rectangular $W$-algebra. For associative algebras $A$ and $B$, we set
\begin{equation*}
C(A,B)=\{x\in A\mid xy=yx\text{ for }y\in B\}.
\end{equation*}
In the finite setting, Olshanskii \cite{Ol} gave a homomorphism from the finite Yangian associated with $\mathfrak{gl}(m)$ to $C(U(\mathfrak{gl}(m+n)),U(\mathfrak{gl}(n)))$. By using the parabolic presentation of Brundan-Kleshchev \cite{BK0}, $C(U(\mathfrak{gl}(m+n)),U(\mathfrak{gl}(n)))$ can be decomposed into the tensor product of the center of $U(\mathfrak{gl}(n))$ and the image of the evaluation map of the Yangian associated with $\mathfrak{gl}(m)$. Moreover, the finite Yangian associated with $\mathfrak{gl}(m)$ can be embedded into the projective limit of this centralizer algebra.

The affine Yangian has a surjective homomorphism called the evaluation map (Guay \cite{Gu1} and Kodera \cite{K1}, \cite{K2}):
\begin{equation*}
\ev^n_{\hbar,\ve}\colon Y_{\hbar,\ve}(\widehat{\mathfrak{sl}}(n))\to U(\widehat{\mathfrak{gl}}(n)).
\end{equation*}
By combining $\ev^{m+n}_{\hbar,\ve}$ and $\Psi_2$, we obtain a homomorphism
\begin{equation*}
\ev^{m+n}_{\hbar,\ve}\circ\Psi_2\colon Y_{\hbar,\ve+n\hbar}(\widehat{\mathfrak{sl}}(m))\to C(U(\widehat{\mathfrak{gl}}(m+n)),U(\widehat{\mathfrak{gl}}(n))).
\end{equation*}
Similarly to finite setting, we expect that the affine Yangian can be embedded into the projective limit of the centralizer algebra $C(U(\widehat{\mathfrak{gl}}(m+n)),U(\widehat{\mathfrak{gl}}(n)))$ through this homomorphism. We also conjecture that $C(U(\widehat{\mathfrak{gl}}(m+n)),U(\widehat{\mathfrak{gl}}(n)))$ is isomorphic to the tensor product of the center of $U(\widehat{\mathfrak{gl}}(n))$ and the image of $\ev^{m+n}_{\hbar,\ve}\circ\Psi_2$.

The similar result holds for rectangular $W$-algebras. There exists a natural embedding from the rectangular $W$-algebra $\mathcal{W}^{k+m}(\mathfrak{gl}(2n),(2^{n}))$ to $\mathcal{W}^k(\mathfrak{gl}(2(m+n),(2^{m+n}))$. 
By combining $\Phi^{m+n}$ and $\Psi_2$, we can obtain a homomorphism
\begin{equation*}
\Phi^{m+n}\circ\Psi_2\colon Y_{\hbar,\ve+n\hbar}(\widehat{\mathfrak{sl}}(m))\to C(\mathcal{U}(\mathcal{W}^k(\mathfrak{gl}(2m+2n),(2^{m+n}))),\mathcal{U}(\mathcal{W}^{k+m}(\mathfrak{gl}(2n),(2^{n})))).
\end{equation*}
Similarly to the evaluation map, we expect that $C(U(\widehat{\mathfrak{gl}}(m+n)),U(\widehat{\mathfrak{gl}}(n)))$ is isomorphic to the tensor product of the center of the universal envelpoping algebra of the rectangular $W$-algebra and the image of $\Phi^{m+n}\circ\Psi_2$.
We also conjecture that we can obtain the similar homomorhism for any $l\geq 3$. For extending to the case that $l\geq3$, we only need to show that we can embed the rectangular $W$-algebra $\mathcal{W}^{k+(l-1)m}(\mathfrak{gl}(ln),(l^{n}))$ to $\mathcal{W}^k(\mathfrak{gl}(lm+ln),(l^{m+n}))$ naturally. 

Kodera-Ueda \cite{KU} gave the meaning to $\Phi^n$ from the perspective of $l$. The meaning is that the coproduct for the affine Yangian corresponds to the parabolic induction for a rectangular $W$-algebra via the homomorphism $\Phi^n$. The result of this article attaches the meaning to $\Phi^n$ from the perspective of $n$. 

We expect that this result can be applicable to the generalization of the Gaiotto-Rapcak's triality.
Gaiotto and Rapcak \cite{GR} introduced a kind of vertex algebras called $Y$-algebras. The $Y$-algebras are related to twisted $N=4$ supersymmetric gauge theories. Gaiotto-Rapcak \cite{GR} conjectured a triality of the isomorphism of $Y$-algebras. Let us consider a $W$-algebra associated with $\mathfrak{sl}(m+n)$ and its nipotent element $f_{n,m}$ of type $(n^1,1^m)$. The nilpotent element $f_{n,m}$ can be decomposed into two nilpotent elments: one is a principal nilpotent element of $\mathfrak{gl}(n)$ and another is a nilpotent element of type $(1^n)$, that is, zero. The $W$-algebra $\mathcal{W}^k(\mathfrak{sl}(m+n),f_{n,m})$ naturally contains the universal affine vertex algebra $V^{k-m-1}(\mathfrak{gl}(m))$, which is corresponding to the latter nilpotent element. It is known that some kinds of $Y$-algebras can be realized as a coset of the pair of  $\mathcal{W}^k(\mathfrak{sl}(m+n),f_{n,m})$ and $V^{k-m-1}(\mathfrak{gl}(m))$ up to Heisenberg algebras. In this case, Creutzig-Linshaw \cite{CR} have proved the triality conjecture. This result is the generalization of the Feigin-Frenkel duality and the coset realization of principal $W$-algebra.

The $Y$-algebras can be interpreted as a truncation of $\mathcal{W}_{1+\infty}$-algebra (\cite{GG}), whose universal enveloping algebra is isomorphic to the affine Yangian of $\widehat{\mathfrak{gl}}(1)$ up to suitable completions (see \cite{AS}, \cite{T} and \cite{MO}). For a vertex algebra $A$ and its vertex subalgebra $B$, let us set the coset vertex algebra of the pair $A$ and $B$
\begin{equation*}
Com(A,B)=\{a\in A\mid |b_{(r)}a=0\text{ for }r\geq0,b\in B\}.
\end{equation*}
The homomorphism $\Phi^{m+n}\circ\Psi_2$ induces the one from the affine Yangian $Y_{\hbar,\ve+n\hbar}(\widehat{\mathfrak{sl}}(m))$ to the universal enveloping algebra of $Com(\mathcal{W}^k(\mathfrak{gl}(2m+2n),(2^{m+n})),\mathcal{W}^k(\mathfrak{sl}(2n),(2^{n})))$. For non-negative integers $n_1$ and $n_2$, we expect that this homomorphism becomes surjective and induces the isomorphism 
\begin{align*}
Com(\mathcal{W}^k(\mathfrak{gl}(2m+2n_1),(2^{m+n_1})),&\mathcal{W}^{k+m}
(\mathfrak{sl}(2n_1),(2^{n_1})))\\
&\simeq Com(\mathcal{W}^k(\mathfrak{gl}(2m+2n_2),(2^{m+n_2})),\mathcal{W}^{k+m}(\mathfrak{sl}(2n_2),(2^{n_2}))),
\end{align*}
which is one of the generalizations of the Gaiotto-Rapcak's triality.

\section{Affine Yangian}
Let us recall the definition of the affine Yangian of type $A$ (Definition~3.2 in \cite{Gu2} and Definition~2.3 in \cite{Gu1}). Hereafter, we sometimes identify $\{0,1,2,\cdots,n-1\}$ with $\mathbb{Z}/n\mathbb{Z}$. Let us set$\{X,Y\}=XY+YX$ and
\begin{equation*}
a_{i,j} =\begin{cases}
2&\text{if } i=j, \\
-1&\text{if }j=i\pm 1,\\
0&\text{otherwise}
	\end{cases}
\end{equation*}
for $i\in\mathbb{Z}/n\mathbb{Z}$.
\begin{Definition}\label{Prop32}
Suppose that $n\geq3$. The affine Yangian $Y_{\hbar,\ve}(\widehat{\mathfrak{sl}}(n))$ is the associative algebra  generated by $X_{i,r}^{+}, X_{i,r}^{-}, H_{i,r}$ $(i \in \{0,1,\cdots, n-1\}, r = 0,1)$ subject to the following defining relations:
\begin{gather}
[H_{i,r}, H_{j,s}] = 0,\label{Eq2.1}\\
[X_{i,0}^{+}, X_{j,0}^{-}] = \delta_{i,j} H_{i, 0},\label{Eq2.2}\\
[X_{i,1}^{+}, X_{j,0}^{-}] = \delta_{i,j} H_{i, 1} = [X_{i,0}^{+}, X_{j,1}^{-}],\label{Eq2.3}\\
[H_{i,0}, X_{j,r}^{\pm}] = \pm a_{i,j} X_{j,r}^{\pm},\label{Eq2.4}\\
[\tilde{H}_{i,1}, X_{j,0}^{\pm}] = \pm a_{i,j}\left(X_{j,1}^{\pm}\right),\text{ if }(i,j)\neq(0,n-1),(n-1,0),\label{Eq2.5}\\
[\tilde{H}_{0,1}, X_{n-1,0}^{\pm}] = \mp \left(X_{n-1,1}^{\pm}+(\ve+\dfrac{n}{2}\hbar) X_{n-1, 0}^{\pm}\right),\label{Eq2.6}\\
[\tilde{H}_{n-1,1}, X_{0,0}^{\pm}] = \mp \left(X_{0,1}^{\pm}-(\ve+\dfrac{n}{2}\hbar) X_{0, 0}^{\pm}\right),\label{Eq2.7}\\
[X_{i, 1}^{\pm}, X_{j, 0}^{\pm}] - [X_{i, 0}^{\pm}, X_{j, 1}^{\pm}] = \pm a_{ij}\dfrac{\hbar}{2} \{X_{i, 0}^{\pm}, X_{j, 0}^{\pm}\}\text{ if }(i,j)\neq(0,n-1),(n-1,0),\label{Eq2.8}\\
[X_{0, 1}^{\pm}, X_{n-1, 0}^{\pm}] - [X_{0, 0}^{\pm}, X_{n-1, 1}^{\pm}]= \mp\dfrac{\hbar}{2} \{X_{0, 0}^{\pm}, X_{n-1, 0}^{\pm}\} + (\ve+\dfrac{n}{2}\hbar) [X_{0, 0}^{\pm}, X_{n-1, 0}^{\pm}],\label{Eq2.9}\\
(\ad X_{i,0}^{\pm})^{1+|a_{i,j}|} (X_{j,0}^{\pm})= 0 \ \text{ if }i \neq j, \label{Eq2.10}
\end{gather}
where $\widetilde{H}_{i,1}=H_{i,1}-\dfrac{\hbar}{2}H_{i,0}^2$.
\end{Definition}
\begin{Remark}
Definition~\ref{Prop32} is different from Definition~3.2 in \cite{Gu2} and Definition~2.3 in \cite{Gu1}. Guay-Nakajima-Wendlandt \cite{GNW} gave the minimalistic presentation of the affine Yangian. Definition~\ref{Prop32} can be derived from the minimalistic presentation (see Section 2 in \cite{U8}).
\end{Remark}
By using the defining relations of the affine Yangian, we find the following relations (see Section 2 in \cite{U8}):
\begin{gather}
[X^\pm_{i,r},X^\pm_{j,s}]=0\text{ if }|i-j|>1,\label{gather1}\\
[X^\pm_{i,1},[X^\pm_{i,0},X^\pm_{j+1,r}]]+[X^\pm_{i,0},[X^\pm_{i,1},X^\pm_{j+1,r}]]=0.\label{gather2}
\end{gather}
By the definition of the affine Yangian $Y_{\hbar,\ve}(\widehat{\mathfrak{sl}}(n))$, we find that there exists a natural homomorphism from the universal enveloping algebra of $\widehat{\mathfrak{sl}}(n)$ to $Y_{\hbar,\ve}(\widehat{\mathfrak{sl}}(n))$. In order to simplify the notation, we denote the image of $x\in U(\widehat{\mathfrak{sl}}(n))$ via this homomorphism by $x$.

We take one completion of $Y_{\hbar,\ve}(\widehat{\mathfrak{sl}}(n))$. We set the degree of $Y_{\hbar,\ve}(\widehat{\mathfrak{sl}}(n))$ by
\begin{equation*}
\text{deg}(H_{i,r})=0,\ \text{deg}(X^\pm_{i,r})=\begin{cases}
\pm1&\text{ if }i=0,\\
0&\text{ if }i\neq0.
\end{cases}
\end{equation*}
We denote the standard degreewise completion of $Y_{\hbar,\ve}(\widehat{\mathfrak{sl}}(n))$ by $\widetilde{Y}_{\hbar,\ve}(\widehat{\mathfrak{sl}}(n))$ (see Section 1.3 in \cite{MNT} and Section A.2 in \cite{A1}). Let us set $A_i\in\widetilde{Y}_{\hbar,\ve}(\widehat{\mathfrak{sl}}(n))$ as
\begin{align*}
A_i&=\dfrac{\hbar}{2}\sum_{\substack{s\geq0\\u>v}}\limits E_{u,v}t^{-s}[E_{i,i},E_{v,u}t^s]+\dfrac{\hbar}{2}\sum_{\substack{s\geq0\\u<v}}\limits E_{u,v}t^{-s-1}[E_{i,i},E_{v,u}t^{s+1}]\\
&=\dfrac{\hbar}{2}\sum_{\substack{s\geq0\\u>i}}\limits E_{u,i}t^{-s}E_{i,u}t^s-\dfrac{\hbar}{2}\sum_{\substack{s\geq0\\i>v}}\limits E_{i,v}t^{-s}E_{v,i}t^s\\
&\quad+\dfrac{\hbar}{2}\sum_{\substack{s\geq0\\u<i}}\limits E_{u,i}t^{-s-1}E_{i,u}t^{s+1}-\dfrac{\hbar}{2}\sum_{\substack{s\geq0\\i<v}}\limits E_{i,v}t^{-s-1}E_{v,i}t^{s+1},
\end{align*}
where $E_{i,j}$ is a matrix unit whose $(a,b)$ component is $\delta_{a,i}\delta_{b,j}$.
Similarly to Section~3 in \cite{GNW}, we define
\begin{align*}
J(h_i)&=\widetilde{H}_{i,1}+A_i-A_{i+1}\in \widetilde{Y}_{\hbar,\ve}(\widehat{\mathfrak{sl}}(n)).
\end{align*}
We also set $J(x^\pm_i)=\pm\dfrac{1}{2}[J(h_i),x^\pm_i]$.

Guay-Nakajima-Wendlandt \cite{GNW} defined the automorphism of $Y_{\hbar,\ve}(\widehat{\mathfrak{sl}}(n))$ by
\begin{equation*}
\tau_i=\exp(\ad(x^+_{i,0}))\exp(-\ad(x^-_{i,0}))\exp(\ad(x^+_{i,0})).
\end{equation*}
Let $\alpha$ be a positive real root. There is an element $w$ of the Weyl group of $\widehat{\mathfrak{sl}}(n)$ and a simple root $\alpha_j$ such that $\alpha=w\alpha_j$. Then we define a corresponding root vector by
\begin{equation*}
x^\pm_\alpha=\tau_{i_1}\tau_{i_2}\cdots\tau_{i_{p-1}}(x^\pm_{j}),
\end{equation*}
where $w =s_{i_1}s_{i_2}\cdots s_{i_{p-1}}$ is a reduced expression of $w$.
We can define $J(x^\pm_\alpha)$ as
\begin{equation*}
J(x^\pm_\alpha)=\tau_{i_1}\tau_{i_2}\cdots\tau_{i_{p-1}}J(x^\pm_{j}).
\end{equation*} 
\begin{Lemma}[(3.14) and Proposition 3.21 in \cite{GNW}]\label{J}
\begin{enumerate}
\item The following relations hold:
\begin{gather}
[J(h_i),X^\pm_{j,0}]=\pm a_{ij}J(x^\pm_j)\text{ if }(i,j)\neq(0,n-1),(n-1,0),\\
[J(x^\pm_i), X_{j, 0}^{\pm}]=[X_{i, 0}^{\pm}, J(x^\pm_j)]\text{ if }(i,j)\neq(0,n-1),(n-1,0),\\
[J(x^\pm_i),X^\pm_{j,0}]=0\text{ if }|i-j|>1.
\end{gather}
\item
There exists $c_{\alpha,i}\in\mathbb{C}$ satisfying that
\begin{equation*}
[J(h_i),x^\pm_\alpha]=\pm(\alpha_i,\alpha)J(x_\alpha^\pm)\pm c_{\alpha,i}x_\alpha^\pm.
\end{equation*}
\end{enumerate}
\end{Lemma}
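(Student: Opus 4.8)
The plan is to establish part (1) by direct computation from the defining relations together with the explicit shape of $A_i$, and then to obtain part (2) by transporting the first relation of part (1) through the automorphisms $\tau_{i_1},\dots,\tau_{i_{p-1}}$. All computations take place in the completion $\widetilde{Y}_{\hbar,\ve}(\widehat{\mathfrak{sl}}(n))$, where $J(h_i)=\widetilde{H}_{i,1}+A_i-A_{i+1}$ and $J(x^\pm_i)=\pm\frac{1}{2}[J(h_i),x^\pm_i]$ live. For the first relation I would split $[J(h_i),X^\pm_{j,0}]=[\widetilde{H}_{i,1},X^\pm_{j,0}]+[A_i-A_{i+1},X^\pm_{j,0}]$; for $(i,j)\neq(0,n-1),(n-1,0)$ the first bracket is $\pm a_{ij}X^\pm_{j,1}$ by \eqref{Eq2.5}, while the second bracket, since $A_i$ is a sum of products of matrix units $E_{u,v}t^s$, is computed inside the image of $U(\widehat{\mathfrak{sl}}(n))$. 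The point is that the telescoping structure of $A_i-A_{i+1}$ collapses this a priori infinite sum to a finite expression that is exactly $\pm a_{ij}$ times the matrix-unit correction appearing in $J(x^\pm_j)=\pm\frac{1}{2}[J(h_j),x^\pm_j]$, yielding $[J(h_i),X^\pm_{j,0}]=\pm a_{ij}J(x^\pm_j)$.

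For the second relation I would use $J(x^\pm_i)=\pm X^\pm_{i,1}+(\text{correction built from }A_i,A_{i+1})$. Substituting into $[J(x^\pm_i),X^\pm_{j,0}]-[X^\pm_{i,0},J(x^\pm_j)]$, the leading parts contribute $[X^\pm_{i,1},X^\pm_{j,0}]-[X^\pm_{i,0},X^\pm_{j,1}]$, which by the deformed Serre relation \eqref{Eq2.8} equals $\pm a_{ij}\frac{\hbar}{2}\{X^\pm_{i,0},X^\pm_{j,0}\}$; the brackets of the matrix-unit corrections with $X^\pm_{j,0}$, again evaluated in $U(\widehat{\mathfrak{sl}}(n))$, cancel precisely this $\hbar$-anomaly, so the difference of the two sides vanishes. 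The third relation follows from locality: $J(x^\pm_i)$ is supported near node $i$ (its leading term $X^\pm_{i,1}$ commutes with $X^\pm_{j,0}$ for $|i-j|>1$ by \eqref{gather1}, and its matrix-unit corrections involve only rows and columns adjacent to $i$), so its bracket with a far-away $X^\pm_{j,0}$ is zero.

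For part (2) I would induct on the length $p-1$ of the reduced word $w=s_{i_1}\cdots s_{i_{p-1}}$, the base case $w=e$ being the first relation of part (1) with $c_{\alpha,i}=0$. For the inductive step, writing $\sigma=\tau_{i_1}\cdots\tau_{i_{p-1}}$ so that $x^\pm_\alpha=\sigma(x^\pm_j)$ and $J(x^\pm_\alpha)=\sigma(J(x^\pm_j))$, I would rewrite $[J(h_i),x^\pm_\alpha]=\sigma\big([\sigma^{-1}(J(h_i)),x^\pm_j]\big)$. The essential input is the single-reflection transformation rule $\tau_k(J(h_i))=J(s_kh_i)+(\text{scalar})\,x^\pm_\beta$, in which the principal term reproduces the Weyl action on coweights and the scalar anomaly is forced by the affine-node corrections \eqref{Eq2.6}--\eqref{Eq2.9}. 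Iterating this rule, the principal term reassembles (using the Weyl-invariance $(\alpha_i,\alpha)=(w^{-1}\alpha_i,\alpha_j)$ together with part (1)) into $\pm(\alpha_i,\alpha)J(x^\pm_\alpha)$, while the accumulated anomalies produce the constant $c_{\alpha,i}$ and the term $\pm c_{\alpha,i}x^\pm_\alpha$.

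The main obstacle is the bookkeeping at the affine node. The relations in part (1) must exclude $(i,j)=(0,n-1),(n-1,0)$ precisely because $[\widetilde{H}_{0,1},X^\pm_{n-1,0}]$ and its mirror carry the extra $(\ve+\frac{n}{2}\hbar)$-term of \eqref{Eq2.6}--\eqref{Eq2.7}, which destroys the clean identity; the same correction is what forces the undetermined scalar $c_{\alpha,i}$ in part (2). The delicate computational point is to check that the infinite sums defining $A_i$ are well defined in the completion and that their telescoping against the defining relations is exact, including the interaction with these affine-node anomalies; this is where I would concentrate the detailed verification, following the corresponding computations of (3.14) and Proposition~3.21 in \cite{GNW}.
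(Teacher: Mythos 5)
The first thing to note is that the paper contains no proof of this lemma: it is imported wholesale from \cite{GNW} (equation (3.14) and Proposition 3.21 there), so there is no in-paper argument to compare yours against, and what you are really attempting is a reconstruction of GNW's proof. Your overall architecture — direct computation from \eqref{Eq2.5}, \eqref{Eq2.8} and the $A_i$-corrections for part (1), braid-group induction along a reduced word for part (2) — is indeed the standard route. However, two of your key justifications fail as stated. First, the locality argument for $[J(x^\pm_i),X^\pm_{j,0}]=0$ when $|i-j|>1$ is invalid: the correction $\pm\frac{1}{2}[A_i-A_{i+1},X^\pm_{i,0}]$ is \emph{not} supported near node $i$. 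By definition $A_i$ is a sum over all matrix-unit indices $u$ and all loop degrees $s$, so the correction to $J(x^+_i)$ contains terms such as $\frac{\hbar}{2}\sum_{s\geq 0}E_{u,i+1}t^{-s}E_{i,u}t^{s}$ with $u$ arbitrary; bracketing with $E_{j,j+1}$ produces individually nonzero contributions (from $u=j$ and $u=j+1$) which vanish only after cancellation across the sum. This relation therefore requires the same kind of explicit computation as the other two; it does not follow from \eqref{gather1} plus a support consideration.

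Second, your single-reflection rule $\tau_k(J(h_i))=J(s_kh_i)+(\text{scalar})\,x^\pm_\beta$ cannot be correct on weight grounds: $\tau_k$ maps the weight-$\mu$ component (for the adjoint action of the $H_{j,0}$) to the weight-$s_k\mu$ component, and $J(h_i)=\widetilde{H}_{i,1}+A_i-A_{i+1}$ has weight zero, so $\tau_k(J(h_i))$ again has weight zero and its deviation from $J(s_kh_i)$ must be a weight-zero element (built from Cartan elements and scalars), never a multiple of a root vector. The correct mechanism is that these weight-zero corrections, once commuted with $x^\pm_j$ and pushed back through $\sigma$, produce the term $c_{\alpha,i}\,x^\pm_\alpha$. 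Relatedly, your induction base with $c_{\alpha,i}=0$ only covers $(i,j)\neq(0,n-1),(n-1,0)$; the excluded affine-node pairs, where \eqref{Eq2.6}--\eqref{Eq2.7} contribute the $(\ve+\frac{n}{2}\hbar)$-terms, need a separate computation showing that the outcome is still of the form $\pm(\alpha_i,\alpha_j)J(x^\pm_j)\pm c\,x^\pm_j$ — this is exactly the source of nonzero $c_{\alpha,i}$, and your sketch acknowledges it but never carries it out. As it stands, both the third relation of part (1) and part (2) have genuine gaps.
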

\section{A homomorphism from the affine Yangian $Y_{\hbar,\ve+\hbar}(\widehat{\mathfrak{sl}}(n))$ to the affine Yangian $Y_{\hbar,\ve}(\widehat{\mathfrak{sl}}(n+1))$}
In this section, we will construct a homomorphism from the affine Yangian $Y_{\hbar,\ve}(\widehat{\mathfrak{sl}}(n))$ to the degreewise completion of the affine Yangian $Y_{\hbar,\ve}(\widehat{\mathfrak{sl}}(n+1))$, which is different from the one given in \cite{U8}.
\begin{Theorem}\label{Main}
There exists an algebra homomorphism
\begin{equation*}
\Psi\colon Y_{\hbar,\ve+\hbar}(\widehat{\mathfrak{sl}}(n))\to \widetilde{Y}_{\hbar,\ve}(\widehat{\mathfrak{sl}}(n+1))
\end{equation*}
determined by
\begin{gather*}
\Psi(H_{i,0})=\begin{cases}
H_{0,0}+H_{1,0}&\text{ if }i=0,\\
H_{i+1,0}&\text{ if }i\neq 0,
\end{cases}\\
\Psi(X^+_{i,0})=\begin{cases}
E_{n+1,2}t&\text{ if }i=0,\\
E_{i+1,i+2}&\text{ if }i\neq 0,
\end{cases}\ 
\Psi(X^-_{i,0})=\begin{cases}
E_{2,n+1}t^{-1}&\text{ if }i=0,\\
E_{i+2,i+1}&\text{ if }i\neq 0,
\end{cases}
\end{gather*}
and
\begin{align*}
\Psi(H_{i,1})&= H_{i+1,1}+\hbar\displaystyle\sum_{s \geq 0} \limits E_{1,i+1}t^{-s-1}E_{i+1,1}t^{s+1} -\hbar\displaystyle\sum_{s \geq 0}\limits E_{1,i+2}t^{-s-1} E_{i+2,1}t^{s+1},\\
\Psi(X^+_{i,1})&=X^+_{i+1,1}+\hbar\displaystyle\sum_{s \geq 0}\limits E_{1,i+2}t^{-s-1} E_{i+1,1}t^{s+1},\\
\Psi(X^-_{i,1})&=X^-_{i+1,1}+\hbar\displaystyle\sum_{s \geq 0}\limits E_{1,i+1}t^{-s-1} E_{i+2,1}t^{s+1}
\end{align*}
for $i\neq 0$. In particular, we have
\begin{align*}
\Psi(\widetilde{H}_{i,1})&= \widetilde{H}_{i+1,1}+\hbar\displaystyle\sum_{s \geq 0} \limits E_{1,i+1}t^{-s-1}E_{i+1,1}t^{s+1} -\hbar\displaystyle\sum_{s \geq 0}\limits E_{1,i+2}t^{-s-1} E_{i+2,1}t^{s+1}\text{ for }i\neq 0.
\end{align*}
\end{Theorem}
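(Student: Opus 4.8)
The plan is to check that the prescribed images satisfy every defining relation \eqref{Eq2.1}--\eqref{Eq2.10} of the source $Y_{\hbar,\ve+\hbar}(\widehat{\mathfrak{sl}}(n))$ after transporting them into $\widetilde{Y}_{\hbar,\ve}(\widehat{\mathfrak{sl}}(n+1))$. Since $\Psi$ is prescribed only on the degree-$0$ generators at all nodes and on the degree-$1$ generators $X^\pm_{i,1},H_{i,1}$ at the nodes $i\neq0$, I would first record that these elements already generate the source (via \eqref{Eq2.7} one recovers $X^\pm_{0,1}$ as a bracket of $\widetilde{H}_{n-1,1}$ with $X^\pm_{0,0}$, and then $H_{0,1}$ from \eqref{Eq2.3}), which is exactly why $\Psi$ is \emph{determined} by the listed data. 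For existence I would then fix the missing images $\Psi(X^\pm_{0,1})$ by solving \eqref{Eq2.7} (whose right-hand side involves only $\Psi(\widetilde{H}_{n-1,1})$, given since $n-1\neq0$, and $\Psi(X^\pm_{0,0})$) and set $\Psi(H_{0,1}):=[\Psi(X^+_{0,1}),\Psi(X^-_{0,0})]$ using the source parameter $\ve+\hbar$. With these choices \eqref{Eq2.7} and one half of \eqref{Eq2.3} hold by construction, and everything reduces to verifying the remaining relations.

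The degree-$0$ relations \eqref{Eq2.1}, \eqref{Eq2.2}, \eqref{Eq2.4} for $r=0$, and \eqref{Eq2.10} are the cheapest: the degree-$0$ part of $\Psi$ is precisely the node-shift embedding $\widehat{\mathfrak{sl}}(n)\hookrightarrow\widehat{\mathfrak{sl}}(n+1)$ carrying the block of nodes $\{1,\dots,n-1\}$ onto $\{2,\dots,n\}$ and the affine node onto the real root vectors $E_{n+1,2}t,\,E_{2,n+1}t^{-1}$, so these relations already hold inside the image of $U(\widehat{\mathfrak{sl}}(n+1))$; in particular $[\Psi(X^+_{0,0}),\Psi(X^-_{0,0})]=E_{n+1,n+1}-E_{2,2}+c=\Psi(H_{0,0})$ reproduces the correct Cartan, where $c$ is the central element. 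The clause for $\Psi(\widetilde{H}_{i,1})$ is then immediate, since $\Psi(H_{i,0})=H_{i+1,0}$ carries no correction for $i\neq0$, whence $\Psi(\widetilde{H}_{i,1})=\Psi(H_{i,1})-\tfrac{\hbar}{2}H_{i+1,0}^2=\widetilde{H}_{i+1,1}+(\text{correction})$.

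For the ``bulk'' relations, i.e.\ \eqref{Eq2.3}, \eqref{Eq2.4} for $r=1$, \eqref{Eq2.5}, and \eqref{Eq2.8} at indices avoiding the affine node, I would compute directly. The key structural observation is that each correction term $\hbar\sum_{s\geq0}E_{1,\bullet}t^{-s-1}E_{\bullet,1}t^{s+1}$ lies in the completion of $U(\widehat{\mathfrak{gl}}(n+1))$ and is built only from row and column $1$; it is an $A_i$-type half-current of the kind used to define $J(h_i)$ in Section~2. Consequently its bracket with a degree-$0$ image is a Lie-algebra computation, while its bracket with a shifted generator $X^\pm_{i+1,1}$ or $\widetilde{H}_{i+1,1}$ is governed by the target relations \eqref{Eq2.3}--\eqref{Eq2.5}, \eqref{Eq2.8} together with \eqref{gather1} and \eqref{gather2}. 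Since the Cartan matrix entries match under the node shift away from the affine node, the leading terms pair off, and one is left checking that the correction-vs-correction and correction-vs-shifted brackets assemble into the prescribed corrections on the right-hand sides; these are finite identities among matrix units once the geometric sums in $t$ telescope.

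The main obstacle is the cluster \eqref{Eq2.6}, \eqref{Eq2.7}, \eqref{Eq2.9} attached to the affine node, for two compounding reasons. First, $\Psi(X^\pm_{0,0})=E_{n+1,2}t,\,E_{2,n+1}t^{-1}$ is a \emph{non-simple} real root vector of $\widehat{\mathfrak{sl}}(n+1)$, so these brackets cannot be read off from the target's simple-node relations; I expect to route them through the $\tau_i$-conjugated root vectors $x^\pm_\alpha$, the elements $J(x^\pm_\alpha)$, and Lemma~\ref{J}, which were introduced precisely to control degree-$1$ data at arbitrary real roots. Second, there is a parameter mismatch to absorb: the source carries $\ve+\hbar$ and rank $n$, so the constant in \eqref{Eq2.6}, \eqref{Eq2.7}, \eqref{Eq2.9} is $\ve+\tfrac{n+2}{2}\hbar$, whereas the target supplies $\ve+\tfrac{n+1}{2}\hbar$; the residual $\tfrac{\hbar}{2}$ must be produced exactly by the infinite correction sums, including the central term of the loop cocycle. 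Verifying that these sums converge in the degreewise completion and contribute precisely this coefficient — and that the two candidate values of $\Psi(H_{0,1})$ coming from the symmetric form of \eqref{Eq2.3} agree — is, I expect, the most delicate part, with \eqref{Eq2.9} giving the tightest coefficient match.
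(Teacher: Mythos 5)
Your overall architecture --- recover $\Psi(X^\pm_{0,1})$ and $\Psi(H_{0,1})$ from the listed data (the paper does this via \eqref{Eq2.5} with $\widetilde{H}_{1,1}$ rather than your \eqref{Eq2.7}, but both work), then verify every defining relation in the target --- is the paper's strategy, and your analysis of the affine-node relation \eqref{Eq2.9} is exactly right: the source constant $(\ve+\hbar)+\tfrac{n}{2}\hbar$ exceeds the target's $\ve+\tfrac{n+1}{2}\hbar$ by $\tfrac{\hbar}{2}$, and in the paper this residue is produced by the cross term $\hbar E_{1,2}E_{n,1}t$ between a correction sum and a degree-zero image; the paper gets this by writing $\Psi(X^+_{0,0})=[X^+_{0,0},X^+_{1,0}]$ and using the target's own relations \eqref{Eq2.8}, \eqref{Eq2.9}, \eqref{gather1} directly, rather than routing through $\tau_i$-conjugated root vectors as you propose.

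There is, however, a genuine gap: you file relation \eqref{Eq2.1} among ``the cheapest'' checks, justified by the observation that the degree-zero part of $\Psi$ lands in the image of $U(\widehat{\mathfrak{sl}}(n+1))$. That justification covers only $[\Psi(H_{i,0}),\Psi(H_{j,0})]$. For $r=s=1$ the relation $[\Psi(H_{i,1}),\Psi(H_{j,1})]=0$ is not a Lie-algebra statement at all: $\Psi(\widetilde{H}_{i,1})=\widetilde{H}_{i+1,1}+R_i-R_{i+1}$ contains the Yangian degree-one generator plus the infinite quadratic sums $R_i=\hbar\sum_{s\geq0}E_{1,1+i}t^{-s-1}E_{1+i,1}t^{s+1}$, and neither $[\widetilde{H}_{i+1,1},R_j]$ nor $[R_i,R_j]$ vanishes termwise. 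This is in fact one of only the two relations the paper bothers to verify in detail, and by far the most laborious: one must decompose $\widetilde{H}_{i+1,1}=J(h_{1+i})-A_{1+i}+A_{2+i}$, invoke Lemma~\ref{J} to dispose of the $[J(h_{1+i}),R_j-R_{j+1}]$ terms, and then prove the identity $-[A_{1+i},R_j]+[A_{1+j},R_i]+[R_i,R_j]=0$ by a long normal-ordering computation (the appendix formulas \eqref{AQ-1}, \eqref{AQ-2} and the subsequent multi-term cancellation). Your plan makes no provision for this step; the only place you contemplate correction-versus-correction brackets is under the bulk relations, where you describe them as ``finite identities among matrix units once the geometric sums telescope'' --- but here they are identities between infinite normally ordered sums, with coefficients such as $(s+1)$, whose mutual cancellation is precisely the content of the paper's longest computation. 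Without an argument for this case the verification is incomplete, and the Lie-algebra reasoning you offer for \eqref{Eq2.1} would fail.
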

\begin{Remark}
In \cite{U7}, we gave a homomorphism from the affine Yangian to the universal enveloping algebra of a non-rectangualar $W$-algebra of type $A$ by constructing one kind of the coproduct for the extended affine Yangian. However, the meaning of this coproduct is not clear. One of the motivation of the construction of $\Psi$ is to give the meaning to this coproduct. Based on $B_i^\pm$ defined in Theorem 3.17 of \cite{U7}, we can expect that there exists a homomorphism from the affine Yangian associated with $\widehat{\mathfrak{sl}}(n)$ to the one associated with $\widehat{\mathfrak{sl}}(n+1)$ whose form is
\begin{align*}
\Psi(X^+_{i,1})&=X^+_{i+1,1}+b\hbar\displaystyle\sum_{s \geq 0}\limits E_{1,i+2}t^{-s-a} E_{i+1,1}t^{s+a},\\
\Psi(X^-_{i,1})&=X^-_{i+1,1}+\hbar\displaystyle\sum_{s \geq 0}\limits E_{1,i+1}t^{-s-a} E_{i+2,1}t^{s+a}
\end{align*}
for $i\neq 0$ and some $a\in\mathbb{Z},b\in\{\pm1\}$. In \cite{U11}, by using the homomorphism $\Psi$, we gave the another proof to the main theorem of \cite{U7}. This result is one of the interpretations of the coproduct for the extended affine Yangian.
\end{Remark}
\begin{Corollary}
The following relations hold:
\begin{align*}
\Psi(H_{0,1})&= H_{0,1}+H_{1,1}+\hbar H_{0,0}H_{1,0}+\dfrac{\hbar}{2}H_{0,0}\\
&\quad-\hbar\displaystyle\sum_{s \geq 0} \limits E_{1,2}t^{-s-1}E_{2,1}t^{s+1}+\hbar\displaystyle\sum_{s \geq 0} \limits E_{1,n+1}t^{-s-1}E_{n+1,1}t^{s+1},\\
\Psi(X^+_{0,1})&=[X^+_{0,0},X^+_{1,1}]+\hbar\displaystyle\sum_{s \geq 0} \limits E_{1,2}t^{-s-1}E_{n+1,1}t^{s+2},\\
\Psi(X^-_{i,1})&=[X^-_{1,0},X^-_{0,1}]+\hbar\displaystyle\sum_{s \geq 0} \limits E_{1,n+1}t^{-s-2}E_{2,1}t^{s+1}.
\end{align*}
In particular, we obtain
\begin{align*}
\Psi(\widetilde{H}_{0,1})&=\widetilde{H}_{0,1}+\widetilde{H}_{1,1}+\dfrac{\hbar}{2}H_{0,0}\\
&\quad-\hbar\displaystyle\sum_{s \geq 0} \limits E_{1,2}t^{-s-1}E_{2,1}t^{s+1}+\hbar\displaystyle\sum_{s \geq 0} \limits E_{1,n+1}t^{-s-1}E_{n+1,1}t^{s+1}. 
\end{align*}
\end{Corollary}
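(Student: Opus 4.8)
The plan is to compute the images of the generators $X^{\pm}_{0,1}$ and $H_{0,1}$, which are not among those listed in Theorem~\ref{Main}, by expressing each of them through the defining relations in terms of generators whose images are already prescribed (the degree-$0$ generators and the degree-$1$ generators attached to the non-affine nodes). The entry point is relation \eqref{Eq2.5} in the source algebra $Y_{\hbar,\ve+\hbar}(\widehat{\mathfrak{sl}}(n))$ applied to the pair $(i,j)=(1,0)$: since $n\geq 3$ we have $(1,0)\neq(0,n-1),(n-1,0)$ and $a_{1,0}=-1$, so $X^{+}_{0,1}=-[\widetilde{H}_{1,1},X^{+}_{0,0}]$ and $X^{-}_{0,1}=[\widetilde{H}_{1,1},X^{-}_{0,0}]$. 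Applying $\Psi$ and using that it is an algebra homomorphism gives $\Psi(X^{\pm}_{0,1})=\mp[\Psi(\widetilde{H}_{1,1}),\Psi(X^{\pm}_{0,0})]$, after which I would substitute the images of $\widetilde{H}_{1,1}$ and $X^{\pm}_{0,0}$ provided by Theorem~\ref{Main}.

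To evaluate $\Psi(X^{+}_{0,1})=-[\Psi(\widetilde{H}_{1,1}),E_{n+1,2}t]$ I would split $\Psi(\widetilde{H}_{1,1})$ into its leading term $\widetilde{H}_{2,1}$ and the two matrix-unit sums over $E_{1,2}t^{-s-1}E_{2,1}t^{s+1}$ and $E_{1,3}t^{-s-1}E_{3,1}t^{s+1}$. The leading piece reassembles into the expected principal term: writing $E_{n+1,2}t=[X^{+}_{0,0},X^{+}_{1,0}]$ inside $\widehat{\mathfrak{sl}}(n+1)$, applying the Jacobi identity, and using both $[\widetilde{H}_{2,1},X^{+}_{0,0}]=0$ (nodes $0$ and $2$ are non-adjacent in $\widehat{\mathfrak{sl}}(n+1)$ for $n\geq 3$) and the target relation $[\widetilde{H}_{2,1},X^{+}_{1,0}]=-X^{+}_{1,1}$, one finds $-[\widetilde{H}_{2,1},E_{n+1,2}t]=[X^{+}_{0,0},X^{+}_{1,1}]$. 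The two remaining sums are handled termwise by the Leibniz rule together with the loop commutator $[E_{a,b}t^{p},E_{c,d}t^{q}]=\delta_{b,c}E_{a,d}t^{p+q}-\delta_{d,a}E_{c,b}t^{p+q}$; a short check shows that the $E_{1,3}$-sum contributes nothing, while the $E_{1,2}$-sum produces exactly $\hbar\sum_{s\geq0}E_{1,2}t^{-s-1}E_{n+1,1}t^{s+2}$. The computation of $\Psi(X^{-}_{0,1})$ is entirely parallel.

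For $\Psi(H_{0,1})$ I would use relation \eqref{Eq2.3} in the form $H_{0,1}=[X^{+}_{0,1},X^{-}_{0,0}]$, so that $\Psi(H_{0,1})=[\Psi(X^{+}_{0,1}),E_{2,n+1}t^{-1}]$, and insert the formula for $\Psi(X^{+}_{0,1})$ just obtained. Expanding the bracket by the Leibniz rule and collecting the matrix-unit commutators yields the two infinite sums over $E_{1,2}t^{-s-1}E_{2,1}t^{s+1}$ and $E_{1,n+1}t^{-s-1}E_{n+1,1}t^{s+1}$, while the bracket of the principal term $[X^{+}_{0,0},X^{+}_{1,1}]$ with $E_{2,n+1}t^{-1}$, once the products of matrix units are reordered, produces the Cartan part $H_{0,1}+H_{1,1}+\hbar H_{0,0}H_{1,0}+\tfrac{\hbar}{2}H_{0,0}$.

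Finally, the ``in particular'' statement follows from $\widetilde{H}_{0,1}=H_{0,1}-\tfrac{\hbar}{2}H_{0,0}^2$: applying $\Psi$, using $\Psi(H_{0,0})=H_{0,0}+H_{1,0}$ and the fact that $H_{0,0}$ and $H_{1,0}$ commute by \eqref{Eq2.1}, the subtraction of $\tfrac{\hbar}{2}(H_{0,0}+H_{1,0})^2$ cancels the quadratic term $\hbar H_{0,0}H_{1,0}$ and converts $H_{0,1}+H_{1,1}$ into $\widetilde{H}_{0,1}+\widetilde{H}_{1,1}$, leaving the stated expression with the linear correction $\tfrac{\hbar}{2}H_{0,0}$ intact. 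The step I expect to be the main obstacle is the $H_{0,1}$ computation: one must track, inside the degreewise completion, the reordering of the products of matrix units so as to produce precisely the quadratic correction $\hbar H_{0,0}H_{1,0}$ and the linear correction $\tfrac{\hbar}{2}H_{0,0}$, and verify that the two infinite sums recombine correctly without spurious central contributions arising from the brackets with total loop degree zero.
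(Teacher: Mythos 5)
Your route is the paper's route: you extract $X^{\pm}_{0,1}$ from \eqref{Eq2.5} with $(i,j)=(1,0)$ in the source algebra, split $\Psi(\widetilde{H}_{1,1})$ into $\widetilde{H}_{2,1}$ plus the two matrix-unit sums, recover the principal term $[X^+_{0,0},X^+_{1,1}]$ by writing $E_{n+1,2}t=[X^+_{0,0},X^+_{1,0}]$ and using the target relations, and then obtain $H_{0,1}$ from \eqref{Eq2.3}; the ``in particular'' step via expanding $\tfrac{\hbar}{2}(H_{0,0}+H_{1,0})^2$ is also exactly what is needed. Up to one point, this coincides with the proof in the paper.

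The one place where your plan, as written, would not go through is the evaluation of the principal bracket $[[X^+_{0,0},X^+_{1,1}],E_{2,n+1}t^{-1}]$. You propose to compute it by ``reordering the products of matrix units,'' but $X^+_{1,1}$ is a degree-one Yangian generator of $\widetilde{Y}_{\hbar,\ve}(\widehat{\mathfrak{sl}}(n+1))$ with no expression in terms of matrix units, so no loop-algebra reordering can evaluate this bracket. The paper instead writes $E_{2,n+1}t^{-1}=[X^-_{1,0},X^-_{0,0}]$ and evaluates $[[X^+_{0,0},X^+_{1,1}],[X^-_{1,0},X^-_{0,0}]]$ purely from the Yangian relations \eqref{Eq2.1}--\eqref{Eq2.5} (Jacobi identity, $[X^+_{1,1},X^-_{1,0}]=H_{1,1}$, $[X^+_{0,0},X^-_{0,0}]=H_{0,0}$, $[\widetilde{H}_{1,1},X^+_{0,0}]=-X^+_{0,1}$, and so on), arriving at \eqref{911-1}. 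Relatedly, your bookkeeping of where the terms land is off: this principal bracket produces not only the Cartan part but also the extra term $\hbar X^-_{0,0}X^+_{0,0}=\hbar E_{1,n+1}t^{-1}E_{n+1,1}t$, and that term is indispensable, because the matrix-unit bracket $[\hbar\sum_{s\geq0}E_{1,2}t^{-s-1}E_{n+1,1}t^{s+2},E_{2,n+1}t^{-1}]$ yields only $-\hbar\sum_{s\geq0}E_{1,2}t^{-s-1}E_{2,1}t^{s+1}+\hbar\sum_{s\geq0}E_{1,n+1}t^{-s-2}E_{n+1,1}t^{s+2}$, whose second sum is missing precisely the $s=0$ term of the sum $\hbar\sum_{s\geq0}E_{1,n+1}t^{-s-1}E_{n+1,1}t^{s+1}$ appearing in the statement. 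You correctly flag the recombination of the sums as the main obstacle, but its resolution requires the Yangian-relation computation just described, not a manipulation of matrix units inside the completion.
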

\begin{proof}
First, we show the relation for $\Psi(X^+_{0,1})$. By \eqref{Eq2.5} and the definition of $\Psi(\widetilde{H}_{1,1})$, we have
\begin{align}
\Psi(X^+_{0,1})&=-[\widetilde{H}_{2,1},E_{n+1,2}t]\nonumber\\
&\quad-[\hbar\displaystyle\sum_{s \geq 0} \limits E_{1,2}t^{-s-1}E_{2,1}t^{s+1},E_{n+1,2}t]+[\hbar\displaystyle\sum_{s \geq 0}\limits E_{1,3}t^{-s-1} E_{3,1}t^{s+1},E_{n+1,2}t]\nonumber\\
&=-[\widetilde{H}_{2,1},E_{n+1,2}t]+\hbar\displaystyle\sum_{s \geq 0} \limits E_{1,2}t^{-s-1}E_{n+1,1}t^{s+2}.\label{910}
\end{align}
By \eqref{Eq2.5}, we obtain
\begin{align*}
-[\widetilde{H}_{2,1},E_{n+1,2}t]&=-[\widetilde{H}_{2,1},[X^+_{0,0},X^+_{1,0}]]=[X^+_{0,0},X^+_{1,1}].
\end{align*}
Thus, we have proven the relation for $\Psi(X^+_{0,1})$.
Similarly, we can obtain the relation for $\Psi(X^-_{0,1})$.

Next, we show the relation for $\Psi(H_{0,1})$. By \eqref{Eq2.3} and the relation for $\Psi(X^+_{0,1})$, we obtain
\begin{align}
\Psi(H_{0,1})&=[[X^+_{0,0},X^+_{1,1}],[X^-_{1,0},X^-_{0,0}]]+[\hbar\displaystyle\sum_{s \geq 0} \limits E_{1,2}t^{-s-1}E_{n+1,1}t^{s+2},E_{2,n+1}t^{-1}]\nonumber\\
&=[[X^+_{0,0},X^+_{1,1}],[X^-_{1,0},X^-_{0,0}]]\nonumber\\
&\quad-\hbar\displaystyle\sum_{s \geq 0} \limits E_{1,2}t^{-s-1}E_{2,1}t^{s+1}+\hbar\displaystyle\sum_{s \geq 0} \limits E_{1,n+1}t^{-s-2}E_{n+1,1}t^{s+2}.\label{911}
\end{align}
By \eqref{Eq2.1}-\eqref{Eq2.5}, we can rewrite the first term of \eqref{911} as follows:
\begin{align}
[[X^+_{0,0},X^+_{1,1}],[X^-_{1,0},X^-_{0,0}]]&=[[X^+_{0,0},H_{1,1}],X^-_{0,0}]+[X^-_{1,0},[H_{0,0},X^+_{1,1}]]\nonumber\\
&=-[[\widetilde{H}_{1,1}+\dfrac{\hbar}{2}H_{1,0}^2,X^+_{0,0}],X^-_{0,0}]+H_{1,1}\nonumber\\
&=H_{0,1}+H_{1,1}+\dfrac{\hbar}{2}[\{H_{1,0},X^+_{0,0}\},X^-_{0,0}]\nonumber\\
&=H_{0,1}+H_{1,1}+\hbar H_{0,0}H_{1,0}+\dfrac{\hbar}{2}\{X^-_{0,0},X^+_{0,0}\}\nonumber\\
&=H_{0,1}+H_{1,1}+\hbar H_{0,0}H_{1,0}+\hbar X^-_{0,0}X^+_{0,0}+\dfrac{\hbar}{2}H_{0,0}.\label{911-1}
\end{align}
By applying \eqref{911-1} to \eqref{911}, we obtain the relation for $\Psi(H_{0,1})$.
\end{proof}
\begin{proof}[The proof of Theorem~\ref{Main}]
The proof of Theorem~\ref{Main} is similar to Theorem~3.1 in \cite{U8}. In this article, we will show the compatibility with \eqref{Eq2.1} and \eqref{Eq2.9}. The other cases can be proven in a similar way to \cite{U8}.
\subsection{Compatibility of \eqref{Eq2.9}}
We only show the $+$ case. The $-$ case can be proven in the same way. By the definition of $\Psi$, we have
\begin{align*}
[\Psi(X_{n-1, 1}^{+}), \Psi(X_{0, 0}^{+})]
&=[X^+_{n,1},[X^+_{0,0},X^+_{1,0}]]+[\hbar\displaystyle\sum_{s \geq 0}\limits E_{1,n+1}t^{-s-1} E_{n,1}t^{s+1},E_{n+1,2}t]\\
&=[X^+_{n,1},[X^+_{0,0},X^+_{1,0}]]+\hbar\displaystyle\sum_{s \geq 0}\limits E_{1,2}t^{-s} E_{n,1}t^{s+1}
\end{align*}
and
\begin{align*}
[\Psi(X_{0, 1}^{+}), \Psi(X_{n-1, 0}^{+})]
&=[[X^+_{0,0},X^+_{1,1}],X^+_{n,0}]+[\hbar\displaystyle\sum_{s \geq 0} \limits E_{1,2}t^{-s-1}E_{n+1,1}t^{s+2},E_{n,n+1}]\\
&=[[X^+_{0,0},X^+_{1,1}],X^+_{n,0}]-\hbar\displaystyle\sum_{s \geq 0} \limits E_{1,2}t^{-s-1}E_{n,1}t^{s+2}.
\end{align*}
Then, by a direct computation, we obtain
\begin{align}
&\quad[\Psi(X_{0, 1}^{\pm}), \Psi(X_{n-1, 0}^{\pm})]-[\Psi(X_{n-1, 1}^{+}), \Psi(X_{0, 0}^{+})]\nonumber\\
&=[[X^+_{0,0},X^+_{1,1}],X^+_{n,0}]+[X^+_{n,1},[X^+_{0,0},X^+_{1,0}]]+\hbar E_{1,2}E_{n,1}t.\label{912}
\end{align}
By \eqref{Eq2.8}, \eqref{gather1} and \eqref{Eq2.9}, we obtain
\begin{align}
&\quad[X^+_{n,1},[X^+_{0,0},X^+_{1,0}]]+[[X^+_{0,0},X^+_{1,1}],X^+_{n,0}]\nonumber\\
&=[X^+_{n,1},[X^+_{0,0},X^+_{1,0}]]+[[X^+_{0,1},X^+_{1,0}],X^+_{n,0}]+[\dfrac{\hbar}{2}\{X^+_{0,0},X^+_{1,0}\},X^+_{n,0}]\nonumber\\
&=[[X^+_{n,1},X^+_{0,0}]+[X^+_{0,1},X^+_{n,0}],X^+_{1,0}]+\dfrac{\hbar}{2}\{[X^+_{0,0},X^+_{n,0}],X^+_{1,0}\}\nonumber\\
&=[-\dfrac{\hbar}{2} \{X_{0, 0}^{+}, X_{n, 0}^{+}\} + (\ve+\dfrac{n+1}{2}\hbar) [X_{0, 0}^{+}, X_{n, 0}^{+}],X^+_{1,0}]+\dfrac{\hbar}{2}\{[X^+_{0,0},X^+_{n,0}],X^+_{1,0}\}\nonumber\\
&=-\dfrac{\hbar}{2} \{E_{n+1,2}t, E_{n,n+1}\}+ (\ve+\dfrac{n+1}{2}\hbar) [E_{n+1,2}t, E_{n,n+1}]-\dfrac{\hbar}{2}\{E_{n,1}t,E_{1,2}\}.\label{912-1}
\end{align}
By applying \eqref{912-1} to \eqref{912}, we obtain
\begin{align*}
&\quad[\Psi(X_{0, 1}^{\pm}), \Psi(X_{n-1, 0}^{\pm})]-[\Psi(X_{n-1, 1}^{+}), \Psi(X_{0, 0}^{+})]\nonumber\\
&=-\dfrac{\hbar}{2} \{E_{n+1,2}t, E_{n,n+1}\}+ (\ve+\dfrac{n+1}{2}\hbar+\dfrac{\hbar}{2}) [E_{n+1,2}t, E_{n,n+1}].
\end{align*}
Thus, we have proven the compatibility with \eqref{Eq2.9}.
\subsection{The compatibility with \eqref{Eq2.1}}
By the definition of $\Psi$, it is enough to show the relation $[\Psi(\widetilde{H}_{i,1}),\Psi(\widetilde{H}_{j,1})]=0$. We only show the case that $i,j\neq 0$. The other cases can be proven in a similar way. Let us set
\begin{equation*}
R_i=\hbar\displaystyle\sum_{s \geq 0} \limits E_{1,1+i}t^{-s-1}E_{1+i,1}t^{s+1}.
\end{equation*}
By the definition of $J(h_i)$ and $\Psi$, we have
\begin{align}
&\quad[\Psi(\widetilde{H}_{i,1}),\Psi(\widetilde{H}_{j,1})]=[\widetilde{H}_{i,1}+R_i-R_{i+1},\widetilde{H}_{j,1}+R_j-R_{j+1}]\nonumber\\
&=0+[\widetilde{H}_{1+i,1},R_j-R_{j+1}]+[R_i-R_{i+1},\widetilde{H}_{1+j,1}]+[R_i-R_{i+1},R_j,R_{j+1}]\nonumber\\
&=[J(h_{1+i})-A_{1+i}+A_{2+i},R_j-R_{j+1}]\nonumber\\
&\quad+[R_i-R_{i+1},J(h_{1+j})-A_{1+j}+A_{2+j}]+[R_i-R_{i+1},R_j-R_{j+1}]\nonumber\\
&=[-A_{1+i}+A_{2+i},R_j-R_{j+1}]+[R_i-R_{i+1},-A_{1+j}+A_{2+j}]+[R_i-R_{i+1},R_j-R_{j+1}],\label{9133}
\end{align}
where the last equality is due to Lemma~\ref{J}. Thus, it is enough to show the relation
\begin{equation*}
-[A_{1+i},R_j]+[A_{1+j},R_i]+[R_i,R_j]=0.\label{913}
\end{equation*}
We will compute all terms of the right hand side of \eqref{913}.
By a direct computation, we obtain
\begin{align}
&\quad[R_i,R_j]\nonumber\\
&=\hbar^2\displaystyle\sum_{s,v \geq 0}\limits E_{1,1+i}t^{-s-1}(E_{1+i,1+j}t^{s-v})E_{1+j,1}t^{v+1}-\hbar^2\displaystyle\sum_{s,v \geq 0}\limits E_{1,1+j}t^{-v-1}(E_{1+j,1+i}t^{v-s})E_{1+i,1}t^{s+1}.\label{550-0}
\end{align}
By the definition of $A_i$, we obtain
\begin{align}
[A_{1+i},R_j]
&=[\dfrac{\hbar}{2}\sum_{\substack{s\geq0\\u>1+i}}\limits E_{u,1+i}t^{-s}E_{1+i,u}t^s,R_j]-[\dfrac{\hbar}{2}\sum_{\substack{s\geq0\\1+i>u}}\limits E_{1+i,u}t^{-s}E_{u,1+i}t^s,R_j]\nonumber\\
&\quad+[\dfrac{\hbar}{2}\sum_{\substack{s\geq0\\u<1+i}}\limits E_{u,1+i}t^{-s-1}E_{1+i,u}t^{s+1},R_j]-[\dfrac{\hbar}{2}\sum_{\substack{s\geq0\\1+i<u}}\limits E_{1+i,u}t^{-s-1}E_{u,1+i}t^{s+1},R_j].\label{9114}
\end{align}
We compute the right hand  side of \eqref{9114}. By \eqref{AQ-1} and \eqref{AQ-2}, we obtain
\begin{align}
\eqref{9114}_1
&=-\dfrac{\hbar^2}{2}\delta_{i,j}\sum_{\substack{s,v\geq0\\u>1+i}}\limits E_{u,1+i}t^{-s}E_{1,u}t^{s-v-1}E_{1+j,1}t^{v+1}\nonumber\\
&\quad-\dfrac{\hbar^2}{2}\delta(j>i)\sum_{\substack{s,v\geq0}}\limits E_{1,1+i}t^{-s-v-1}E_{1+i,1+j}t^sE_{1+j,1}t^{v+1}\nonumber\\
&\quad+\dfrac{\hbar^2}{2}\delta(j>i)\sum_{\substack{s,v\geq0}}\limits E_{1,1+j}t^{-v-1}E_{1+j,1+i}t^{-s}E_{1+i,1}t^{s+v+1}\nonumber\\
&\quad+\dfrac{\hbar^2}{2}\delta_{i,j}\sum_{\substack{s,v\geq0\\u>1+i}}\limits E_{1,1+j}t^{-v-1}E_{u,1}t^{v-s+1}E_{1+i,u}t^s,\label{550-1}\\
\eqref{9114}_2&=\dfrac{\hbar^2}{2}\delta(i>j)\sum_{\substack{s,v\geq0}}\limits E_{1+i,1+j}t^{-s}E_{1,1+i}t^{s-v-1}E_{1+j,1}t^{v+1}\nonumber\\
&\quad-\dfrac{\hbar^2}{2}\sum_{\substack{s,v\geq0}}\limits E_{1+i,1+j}t^{-s-v-1}E_{1,1+i}t^sE_{1+j,1}t^{v+1}\nonumber\\
&\quad+\dfrac{\hbar^2}{2}\delta_{i,j}\sum_{\substack{s,v\geq0\\1+i>u}}\limits E_{1,u}t^{-s-v-1}E_{u,1+i}t^sE_{1+j,1}t^{v+1}\nonumber\\
&\quad-\dfrac{\hbar^2}{2}\delta_{i,j}\sum_{\substack{s,v\geq0\\1+i>u}}\limits E_{1,1+j}t^{-v-1}E_{1+i,u}t^{-s}E_{u,1}t^{s+v+1}\nonumber\\
&\quad+\dfrac{\hbar^2}{2}\sum_{\substack{s,v\geq0}}\limits E_{1,1+j}t^{-v-1}E_{1+i,1}t^{-s}E_{1+j,1+i}t^{s+v+1}\nonumber\\
&\quad-\dfrac{\hbar^2}{2}\delta(i>j)\sum_{\substack{s,v\geq0}}\limits E_{1,1+j}t^{-v-1}E_{1+i,1}t^{v-s+1}E_{1+j,1+i}t^s,\label{550-2}\\
\eqref{9114}_3&=\dfrac{\hbar^2}{2}\sum_{\substack{s,v\geq0}}\limits E_{1,1+i}t^{-s-1}E_{1+i,1+j}t^{s-v}E_{1+j,1}t^{v+1}\nonumber\\
&\quad-\dfrac{\hbar}{2}\delta_{i,j}\sum_{\substack{s,v\geq0}}\limits E_{1,1+i}t^{-s-1}E_{1,1}t^{s-v}E_{1+j,1}t^{v+1}\nonumber\\
&\quad-\dfrac{\hbar^2}{2}\delta(j<i)\sum_{\substack{s,v\geq0}}\limits E_{1,1+i}t^{-s-v-2}E_{1+i,1+j}t^{s+1}E_{1+j,1}t^{v+1}\nonumber\\
&\quad+\dfrac{\hbar^2}{2}\delta(j<i)\sum_{\substack{s,v\geq0}}\limits E_{1,1+j}t^{-v-1}E_{1+j,1+i}t^{-s-1}E_{1+i,1}t^{s+v+2}\nonumber\\
&\quad+\dfrac{\hbar^2}{2}\sum_{\substack{s,v\geq0\\u<1+i}}\limits E_{1,1+j}t^{-v-1}E_{u,1}t^{v-s}E_{1+i,u}t^{s+1}\nonumber\\
&\quad-\dfrac{\hbar^2}{2}\delta_{i,j}\sum_{\substack{s,v\geq0}}\limits E_{1,1+j}t^{-v-1}E_{1+j,1+i}t^{v-s}E_{1+i,1}t^{s+1},\label{550-3}\\
\eqref{9114}_4&=\dfrac{\hbar^2}{2}\delta(i<j)\sum_{\substack{s,v\geq0}}\limits E_{1+i,1+j}t^{-s-1}E_{1,1+i}t^{s-v}E_{1+j,1}t^{v+1}\nonumber\\
&\quad+\dfrac{\hbar^2}{2}\delta_{i,j}\sum_{\substack{s,v\geq0\\1+i<u}}\limits E_{1,u}t^{-s-v-1}E_{u,1+i}t^sE_{1+j,1}t^{v+1}\nonumber\\
&\quad-\dfrac{\hbar^2}{2}\delta_{i,j}\sum_{\substack{s,v\geq0\\1+i<u}}\limits E_{1,1+j}t^{-v-1}E_{1+i,u}t^{-s-1}E_{u,1}t^{s+v+2}\nonumber\\
&\quad-\dfrac{\hbar^2}{2}\delta(i<j)\sum_{\substack{s,v\geq0}}\limits E_{1,1+j}t^{-v-1}E_{1+i,1}t^{v-s}E_{1+j,1+i}t^{s+1}.\label{550-4}
\end{align}
Considering the sum $-[A_{1+i},R_j]+[A_{1+j},R_i]$, the terms containg $\delta_{i,j}$ in \eqref{550-1}-\eqref{550-4} vanish each other. Hereafter, in order to simplify the computation, we will denote the $i$-th term of the right hand side of the equation $(\cdot)$ by $(\cdot)_i$.
We divide the following terms into two piecies:
\begin{align}
\eqref{550-2}_1&=\dfrac{\hbar^2}{2}\delta(i>j)\sum_{\substack{s,v\geq0}}\limits E_{1+i,1+j}t^{-s}E_{1,1+i}t^{-v-1}E_{1+j,1}t^{s+v+1}\nonumber\\
&\quad+\dfrac{\hbar^2}{2}\delta(i>j)\sum_{\substack{s,v\geq0}}\limits E_{1+i,1+j}t^{-s-v-1}E_{1,1+i}t^{s}E_{1+j,1}t^{v+1},\label{5502-1}\\
\eqref{550-2}_4&=-\dfrac{\hbar^2}{2}\delta(i>j)\sum_{\substack{s,v\geq0}}\limits E_{1,1+j}t^{-v-1}E_{1+i,1}t^{-s}E_{1+j,1+i}t^{s+v+1}\nonumber\\
&\quad-\dfrac{\hbar^2}{2}\delta(i>j)\sum_{\substack{s,v\geq0}}\limits E_{1,1+j}t^{-v-s-1}E_{1+i,1}t^{v+1}E_{1+j,1+i}t^s,\label{5502-4}\\
\eqref{550-4}_1&=\dfrac{\hbar^2}{2}\delta(i<j)\sum_{\substack{s,v\geq0}}\limits E_{1+i,1+j}t^{-s-1}E_{1,1+i}t^{-v-1}E_{1+j,1}t^{s+2+v}\nonumber\\
&\quad+\dfrac{\hbar^2}{2}\delta(i<j)\sum_{\substack{s,v\geq0}}\limits E_{1+i,1+j}t^{-s-1-v}E_{1,1+i}t^{s}E_{1+j,1}t^{v+1},\label{5504-1}\\
\eqref{550-4}_4&=-\dfrac{\hbar^2}{2}\delta(i<j)\sum_{\substack{s,v\geq0}}\limits E_{1,1+j}t^{-v-s-2}E_{1+i,1}t^{v+1}E_{1+j,i}t^{s+1}\nonumber\\
&\quad-\dfrac{\hbar^2}{2}\delta(i<j)\sum_{\substack{s,v\geq0}}\limits E_{1,1+j}t^{-v-1}E_{1+i,1}t^{-s}E_{1+j,1+i}t^{v+s+1}.\label{5504-4}
\end{align}
In this proof, we denote, we denote the result of substituting $i=k$ and $j=l$ into the equation $(\cdot)$ as $(\cdot)_{k,l}$. By a direct computation, we obtain
\begin{align}
\eqref{550-1}_{2,i,j}-\eqref{5502-4}_{2,j,i}&=-\dfrac{\hbar^2}{2}\delta(j>i)\sum_{\substack{s\geq0}}\limits(s+1)E_{1,1+i}t^{-s-1}E_{1+i,1}t^{s+1},\label{55-1}\\
\eqref{550-1}_{3,i,j}-\eqref{5502-1}_{1,j,i}&=\dfrac{\hbar^2}{2}\delta(j>i)\sum_{\substack{s\geq0}}\limits (s+1)E_{1,1+i}t^{-s-1}E_{1+i,1}t^{s+1},\label{55-2}\\
\eqref{5502-1}_{2,i,j}+\eqref{5504-1}_{2,i,j}&=\dfrac{\hbar^2}{2}\delta(i\neq j)\sum_{\substack{s,v\geq0}}\limits E_{1+i,1+j}t^{-s-v-1}E_{1,1+i}t^{s}E_{1+j,1}t^{v+1},\label{55-3}\\
\eqref{5502-4}_{1,i,j}+\eqref{5504-4}_{2,i,j}&=-\dfrac{\hbar^2}{2}\delta(i\neq j)\sum_{\substack{s,v\geq0}}\limits E_{1,1+j}t^{-v-1}E_{1+i,1}t^{-s}E_{1+j,1+i}t^{s+v+1},\label{55-4}\\
\eqref{550-3}_{3,i,j}-\eqref{5504-4}_{1,j,i}&=-\dfrac{\hbar^2}{2}\delta(j<i)\sum_{\substack{s\geq0}}\limits(s+1)E_{1,1+i}t^{-s-2}E_{1+i,1}t^{s+2},\label{55-5}\\
\eqref{550-3}_{4,i,j}-\eqref{5504-1}_{1,j,i}&=\dfrac{\hbar^2}{2}\delta(j<i)\sum_{\substack{s\geq0}}\limits E_{1,1+i}t^{-s-2}E_{1+i,1}t^{s+2}.\label{55-6}
\end{align}
Since $\eqref{55-1}+\eqref{55-2}=0$, $\eqref{55-5}+\eqref{55-6}=0$ and
\begin{align*}
\eqref{550-2}_2+\eqref{55-3}&=\dfrac{\hbar^2}{2}\delta_{i,j}\sum_{\substack{s,v\geq0}}\limits E_{1+i,1+j}t^{-s-v-1}E_{1,1+i}t^{s}E_{1+j,1}t^{v+1},\\
\eqref{550-2}_5+\eqref{55-4}&=-\dfrac{\hbar^2}{2}\delta_{i,j}\sum_{\substack{s,v\geq0}}\limits E_{1,1+j}t^{-v-1}E_{1+i,1}t^{-s}E_{1+j,1+i}t^{s+v+1}
\end{align*}
hold by a direct computation, we have
\begin{align}
-[A_{1+i},R_j]+[A_{1+j},R_i]&=-\eqref{550-3}_{1,i,j}+\eqref{550-3}_{6,j,i}+\eqref{550-3}_{1,j,i}-\eqref{550-3}_{6,i,j}.
\end{align}
Since we obtain
\begin{align*}
\eqref{550-0}_1-\eqref{550-3}_{1,i,j}+\eqref{550-3}_{6,j,i}&=0,\\
\eqref{550-0}_2+\eqref{550-3}_{1,j,i}-\eqref{550-3}_{6,i,j}&=0
\end{align*}
by a direct computation, we find that the right hand side of \eqref{913} is equal to zero.
\end{proof}
\section{Two homomorphisms from the affine Yangian associated with $\widehat{\mathfrak{sl}}(n)$ to the affine Yangian associated with $\widehat{\mathfrak{sl}}(n+1)$}
In \cite{U8}, the author constructed a homomorphism from $Y_{\hbar,\ve}(\widehat{\mathfrak{sl}}(n))$ to $\widetilde{Y}_{\hbar,\ve}(\widehat{\mathfrak{sl}}(n+1))$, which is different from the one in Theorem~\ref{Main}.
\begin{Theorem}[Theorem 3.1 in \cite{U8}]\label{Pre}
There exists a homomorphism
\begin{equation*}
\widetilde{\Psi}\colon Y_{\hbar,\ve}(\widehat{\mathfrak{sl}}(n))\to \widetilde{Y}_{\hbar,\ve}(\widehat{\mathfrak{sl}}(n+1))
\end{equation*}
defined by
\begin{gather*}
\widetilde{\Psi}(X^+_{i,0})=\begin{cases}
E_{n,1}t&\text{ if }i=0,\\
E_{i,i+1}&\text{ if }i\neq 0,
\end{cases}\ 
\widetilde{\Psi}(X^-_{i,0})=\begin{cases}
E_{1,n}t^{-1}&\text{ if }i=0,\\
E_{i+1,i}&\text{ if }i\neq 0,
\end{cases}
\end{gather*}
and
\begin{align*}
\widetilde{\Psi}(H_{i,1})&= H_{i,1}-\hbar\displaystyle\sum_{s \geq 0} \limits E_{i,n+1}t^{-s-1} E_{n+1,i}t^{s+1}+\hbar\displaystyle\sum_{s \geq 0}\limits  E_{i+1,n+1}t^{-s-1} E_{n+1,i+1}t^{s+1}
\end{align*}
for $i\neq0$.
\end{Theorem}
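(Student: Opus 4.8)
The plan is to check that $\widetilde{\Psi}$ is compatible with every defining relation \eqref{Eq2.1}--\eqref{Eq2.10}, in parallel with the proof of Theorem~\ref{Main}. The images of the degree-zero generators are matrix units (and their brackets) supported on rows and columns $1,\dots,n$ of $\mathfrak{gl}(n+1)$, with the affine node $0$ realized through the loop factor in $E_{n,1}t$ and $E_{1,n}t^{-1}$; they span a copy of the loop algebra of $\mathfrak{sl}(n)$ inside $\widehat{\mathfrak{gl}}(n+1)\subset\widetilde{Y}_{\hbar,\ve}(\widehat{\mathfrak{sl}}(n+1))$. Consequently the purely degree-zero relations \eqref{Eq2.2}, \eqref{Eq2.4} and \eqref{Eq2.10}, as well as the $r=s=0$ part of \eqref{Eq2.1}, follow at once from the commutation relations of matrix units, and all the genuine content is concentrated in the relations carrying a degree-one generator.

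For the degree-one relations I would first record that, for $i\neq0$,
\[
\widetilde{\Psi}(\widetilde{H}_{i,1}) = \widetilde{H}_{i,1} + S_{i+1} - S_i, \qquad S_i = \hbar\sum_{s\geq0}E_{i,n+1}t^{-s-1}E_{n+1,i}t^{s+1},
\]
and compute the boundary image $\widetilde{\Psi}(\widetilde{H}_{0,1})$ separately from \eqref{Eq2.2}--\eqref{Eq2.5}, exactly as was done for $\Psi$ in the Corollary above. To verify \eqref{Eq2.3} and \eqref{Eq2.5}--\eqref{Eq2.7} I would expand $[\widetilde{\Psi}(\widetilde{H}_{i,1}),\widetilde{\Psi}(X^\pm_{j,0})]$: the $\widetilde{H}_{i,1}$ part reproduces $\pm a_{ij}\,\widetilde{\Psi}(X^\pm_{j,1})$ through the relation \eqref{Eq2.5} in the target, while the bracket of $S_{i+1}-S_i$ against the matrix-unit image of $X^\pm_{j,0}$ produces exactly the extra $E_{\bullet,n+1}$ and $E_{n+1,\bullet}$ terms occurring in $\widetilde{\Psi}(X^\pm_{j,1})$. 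At the affine node the loop factor $t$ and the scalar $\ve$ must be tracked with care: the source has parameter $\ve$ for $\widehat{\mathfrak{sl}}(n)$ and the target has parameter $\ve$ for $\widehat{\mathfrak{sl}}(n+1)$, so the two boundary shifts $\ve+\tfrac{n}{2}\hbar$ and $\ve+\tfrac{n+1}{2}\hbar$ differ by $\tfrac{\hbar}{2}$, and one must check that the correction $S_{i+1}-S_i$ absorbs precisely this discrepancy. The Serre-type relations \eqref{Eq2.8} and \eqref{Eq2.9} then reduce, after substitution of the images, to commutator identities among matrix units combined with \eqref{Eq2.8}, \eqref{gather1} and \eqref{Eq2.9} in the target, exactly in the manner of \eqref{912}--\eqref{912-1}.

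The most laborious step is \eqref{Eq2.1} in the form $[\widetilde{\Psi}(\widetilde{H}_{i,1}),\widetilde{\Psi}(\widetilde{H}_{j,1})]=0$. Writing $\widetilde{H}_{i,1}=J(h_i)-A_i+A_{i+1}$ and expanding, the bracket $[\widetilde{H}_{i,1},\widetilde{H}_{j,1}]$ vanishes by \eqref{Eq2.1} in the target, while Lemma~\ref{J} kills every remaining bracket containing a $J(h)$; what survives is an identity of the shape $-[A_i,S_j]+[A_j,S_i]+[S_i,S_j]=0$ (with the index combinations dictated by $S_{i+1}-S_i$) in the completed current algebra of $\mathfrak{gl}(n+1)$. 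I would expand each term as a doubly indexed sum over $s,v\geq0$, split according to the relative order of $i$ and $j$ (the $\delta(i<j)$, $\delta(i>j)$ and $\delta_{i,j}$ cases), and verify the cancellation term by term, in the same way as the computation \eqref{550-0}--\eqref{55-6}.

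The main obstacle is precisely this last bookkeeping: arranging the many infinite matrix-unit sums so that their pairwise cancellation becomes visible, while confirming at each stage that the brackets of infinite sums are well defined in the degreewise completion $\widetilde{Y}_{\hbar,\ve}(\widehat{\mathfrak{sl}}(n+1))$ (each contributes only finitely many terms in a given degree). The computation is long but mechanical once the sums are grouped by the order of $i$ and $j$; the only genuinely subtle point is the affine-node bookkeeping in \eqref{Eq2.6}, \eqref{Eq2.7} and \eqref{Eq2.9}, where the $\tfrac{\hbar}{2}$ mismatch between the $\widehat{\mathfrak{sl}}(n)$ and $\widehat{\mathfrak{sl}}(n+1)$ boundary shifts has to be matched exactly by the correction terms $S_i$.
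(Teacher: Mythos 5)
Your proposal is correct and coincides with the approach the paper takes: Theorem~\ref{Pre} is quoted from \cite{U8} without reproof, and its verification there --- like the paper's own proof of the analogous Theorem~\ref{Main} --- proceeds exactly as you outline, namely by checking the defining relations one by one, writing the image of each degree-one Cartan generator as $\widetilde{H}_{i,1}$ plus correction sums, invoking Lemma~\ref{J} to eliminate the $J(h_i)$ brackets, and reducing \eqref{Eq2.1} to a term-by-term cancellation of matrix-unit sums in the degreewise completion. Your bookkeeping of the $\tfrac{\hbar}{2}$ discrepancy at the affine node (source shift $\ve+\tfrac{n}{2}\hbar$ versus target shift $\ve+\tfrac{n+1}{2}\hbar$) is also the right mechanism, matching the paper's computation \eqref{912}--\eqref{912-1} for $\Psi$.
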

Let us take integers $m,n\geq3$.
Combining the homomorphisms $\widetilde{\Psi}$ in Theorem~\ref{Pre} for $n,\cdots,m+n-1$, we obtain a homomorphism
\begin{gather*}
\Psi_1\colon Y_{\hbar,\ve}(\widehat{\mathfrak{sl}}(n))\to \widetilde{Y}_{\hbar,\ve}(\widehat{\mathfrak{sl}}(m+n))
\end{gather*}
given by
\begin{gather*}
\Psi_1(X^+_{i,0})=\begin{cases}
E_{n,1}t&\text{ if }i=0,\\
E_{i,i+1}&\text{ if }i\neq 0,
\end{cases}\ 
\Psi_1(X^-_{i,0})=\begin{cases}
E_{1,n}t^{-1}&\text{ if }i=0,\\
E_{i+1,i}&\text{ if }i\neq 0,
\end{cases}
\end{gather*}
and
\begin{align*}
\Psi_1(H_{i,1})&= H_{i,1}-\hbar\displaystyle\sum_{s \geq 0} \limits\sum_{k=n+1}^{m+n}\limits E_{i,k}t^{-s-1} E_{k,i}t^{s+1}+\hbar\displaystyle\sum_{s \geq 0}\limits \sum_{k=n+1}^{m+n}\limits E_{i+1,k}t^{-s-1} E_{k,i+1}t^{s+1}
\end{align*}
for $i\neq0$. Similarly, by combining the homomorphisms given in Theorem~\ref{Main} for $m,\cdots,m+n-1$, we obtain a homomorphism
\begin{gather*}
\Psi_2\colon Y_{\hbar,\ve+n\hbar}(\widehat{\mathfrak{sl}}(m))\to \widetilde{Y}_{\hbar,\ve}(\widehat{\mathfrak{sl}}(m+n))
\end{gather*}
determined by
\begin{gather*}
\Psi_2(X^+_{i,0})=\begin{cases}
E_{m+n,n+1}t&\text{ if }i=0,\\
E_{n+i,n+i+1}&\text{ if }i\neq 0,
\end{cases}\ 
\Psi_2(X^-_{i,0})=\begin{cases}
E_{n+1,m+n}t^{-1}&\text{ if }i=0,\\
E_{n+i+1,n+i}&\text{ if }i\neq 0,
\end{cases}
\end{gather*}
and
\begin{align*}
\Psi_2(H_{i,1})&= H_{i+n,1}+\hbar\displaystyle\sum_{s \geq 0}\limits\sum_{k=1}^n E_{k,n+i}t^{-s-1}E_{n+i,k}t^{s+1} -\hbar\displaystyle\sum_{s \geq 0}\limits\sum_{k=1}^n E_{k,n+i+1}t^{-s-1} E_{n+i+1,k}t^{s+1}
\end{align*}
for $i\neq0$.
\begin{Theorem}\label{Main-theorem}
The homomomorphisms $\Psi_1$ and $\Psi_2$ induce a homomorphism
\begin{equation*}
Y_{\hbar,\ve}(\widehat{\mathfrak{sl}}(n))\otimes Y_{\hbar,\ve+n\hbar}(\widehat{\mathfrak{sl}}(m))\to \widetilde{Y}_{\hbar,\ve}(\widehat{\mathfrak{sl}}(m+n)).
\end{equation*}
\end{Theorem}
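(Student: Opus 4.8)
The plan is to use the universal property of the tensor product: an algebra homomorphism $Y_{\hbar,\ve}(\widehat{\mathfrak{sl}}(n))\otimes Y_{\hbar,\ve+n\hbar}(\widehat{\mathfrak{sl}}(m))\to \widetilde{Y}_{\hbar,\ve}(\widehat{\mathfrak{sl}}(m+n))$ restricting to $\Psi_1$ and $\Psi_2$ on the two factors exists if and only if the images of $\Psi_1$ and $\Psi_2$ commute, i.e. $[\Psi_1(x),\Psi_2(y)]=0$ for all $x,y$. Since the centralizer of a fixed element is a subalgebra and $\Psi_1,\Psi_2$ are algebra homomorphisms, it suffices to verify this on generating sets of the two factors. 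By the defining relations we have $H_{i,0}=[X^+_{i,0},X^-_{i,0}]$, $H_{i,1}=\widetilde{H}_{i,1}+\frac{\hbar}{2}H_{i,0}^2$, and each $X^\pm_{i,1}$ is recovered from $\widetilde{H}_{i,1}$ and $X^\pm_{i,0}$ through \eqref{Eq2.5}--\eqref{Eq2.7}; hence $Y_{\hbar,\ve}(\widehat{\mathfrak{sl}}(n))$ is generated by $\{X^\pm_{i,0},\widetilde{H}_{i,1}\}$, and likewise for the other factor. Thus the statement reduces to the four families of commutators $[\Psi_1(X^\pm_{i,0}),\Psi_2(X^\pm_{j,0})]$, $[\Psi_1(\widetilde{H}_{i,1}),\Psi_2(X^\pm_{j,0})]$, $[\Psi_1(X^\pm_{i,0}),\Psi_2(\widetilde{H}_{j,1})]$ and $[\Psi_1(\widetilde{H}_{i,1}),\Psi_2(\widetilde{H}_{j,1})]$.

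First I would dispose of the degree-zero family. By the explicit formulas every $\Psi_1(X^\pm_{i,0})$ is a matrix unit times a power of $t$ with row and column indices in the block $\{1,\dots,n\}$, while every $\Psi_2(X^\pm_{j,0})$ is supported on indices in the block $\{n+1,\dots,m+n\}$; this remains true for the affine generators, since $\Psi_1(X^+_{0,0})=E_{n,1}t$, $\Psi_1(X^-_{0,0})=E_{1,n}t^{-1}$ stay within block $1$ and $\Psi_2(X^+_{0,0})=E_{m+n,n+1}t$, $\Psi_2(X^-_{0,0})=E_{n+1,m+n}t^{-1}$ within block $2$. As the two index blocks are disjoint, all products of these matrix units vanish and no central term can arise, so $[\Psi_1(X^\pm_{i,0}),\Psi_2(X^\pm_{j,0})]=0$ at once; the same disjointness shows the diagonal elements $\Psi_1(H_{i,0})$, $\Psi_2(H_{j,0})$ commute with everything living in the opposite block.

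Next come the two mixed families, which by the block symmetry between $\Psi_1$ and $\Psi_2$ are entirely parallel, so I treat $[\Psi_1(\widetilde{H}_{i,1}),\Psi_2(X^\pm_{j,0})]$. Writing $\Psi_1(\widetilde{H}_{i,1})=\widetilde{H}_{i,1}-P_i+P_{i+1}$ with $P_i=\hbar\sum_{s\geq0}\sum_{k=n+1}^{m+n}E_{i,k}t^{-s-1}E_{k,i}t^{s+1}$, the abstract part contributes nothing because $a_{i,n+j}=0$ in the Cartan matrix of $\widehat{\mathfrak{sl}}(m+n)$, so $[\widetilde{H}_{i,1},X^+_{n+j,0}]=0$ by \eqref{Eq2.5}. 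For the correction, bracketing $P_i$ with $E_{n+j,n+j+1}$ leaves exactly the two summands $k=n+j$ and $k=n+j+1$, which produce $E_{i,n+j+1}t^{-s-1}E_{n+j,i}t^{s+1}$ with opposite signs and therefore cancel after summing over $s$; the same telescoping disposes of $P_{i+1}$. The affine cases $i=0$ or $j=0$ follow in the same manner, using the explicit formulas for $\Psi_1(\widetilde{H}_{0,1})$ and $\Psi_2(X^\pm_{0,0})$ and, where the abstract part no longer vanishes by a Cartan-matrix argument, the decomposition of $\widetilde{H}_{i,1}$ described below. Hence this family vanishes, and $[\Psi_1(X^\pm_{i,0}),\Psi_2(\widetilde{H}_{j,1})]=0$ follows symmetrically.

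The remaining family $[\Psi_1(\widetilde{H}_{i,1}),\Psi_2(\widetilde{H}_{j,1})]=0$ is the main obstacle and I expect it to absorb the bulk of the work. Expanding both factors as $\widetilde{H}_{i,1}-P_i+P_{i+1}$ and $\widetilde{H}_{n+j,1}+Q_j-Q_{j+1}$, where $Q_l=\hbar\sum_{s\geq0}\sum_{k=1}^{n}E_{k,n+l}t^{-s-1}E_{n+l,k}t^{s+1}$ is the block-$1$ correction of $\Psi_2$, the term $[\widetilde{H}_{i,1},\widetilde{H}_{n+j,1}]$ vanishes by \eqref{Eq2.1} in the target, leaving the cross terms $[\widetilde{H}_{i,1},Q_l]$, $[P_k,\widetilde{H}_{n+j,1}]$ and $[P_k,Q_l]$. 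To treat the first two I would substitute $\widetilde{H}_{i,1}=J(h_i)-A_i+A_{i+1}$ and use Lemma~\ref{J} to transfer the action of $J(h_i)$ onto the matrix-unit factors of the $Q_l$: the $J(x^\pm_\alpha)$-contributions cancel against their symmetric counterparts exactly as in the passage to \eqref{913} in the proof of compatibility with \eqref{Eq2.1}, reducing everything to brackets of the correction terms $A_{1+i}, P_k, Q_l$. The problem then becomes the purely combinatorial one of showing that the resulting triple sums of matrix units, organized by the relative order of the block indices as in the displays \eqref{550-0}--\eqref{550-4} of that proof, cancel in pairs after summing over $s$ and $v$. I expect this bookkeeping to be long but to follow the same telescoping pattern already established there, the spectral shift $\ve\mapsto\ve+n\hbar$ entering only through diagonal and central contributions and not disturbing the cancellations.
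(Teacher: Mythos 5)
Your proposal follows essentially the same route as the paper's own proof: reduce to commutativity of the two images on the generating sets $\{X^\pm_{k,0},\widetilde{H}_{i,1}\}$, dispose of the degree-zero brackets by block disjointness, and treat the brackets involving $\widetilde{H}$ via the decompositions $\Psi_1(\widetilde{H}_{i,1})=\widetilde{H}_{i,1}-P_i+P_{i+1}$, $\Psi_2(\widetilde{H}_{j,1})=\widetilde{H}_{j+n,1}+Q_j-Q_{j+1}$ together with $\widetilde{H}_{i,1}=J(h_i)-A_i+A_{i+1}$ and Lemma~\ref{J}, which is exactly how the paper reduces the hardest case to the identity $[P_i,Q_j]+[A_i,Q_j]+[A_{j+n},P_i]=0$, i.e.\ its equation \eqref{9}. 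The only difference is one of completeness, not of method: the paper executes that final matrix-unit cancellation explicitly (using \eqref{AP-1}--\eqref{AQ-2} and the groupings of \eqref{551-0}--\eqref{551-4}), whereas you describe the bookkeeping and assert, correctly, that it telescopes in the same pattern as in the proof of Theorem~\ref{Main}.
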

\begin{proof}
Let us set
\begin{align*}
P_i&=\hbar\displaystyle\sum_{s \geq 0} \limits\sum_{k=n+1}^{m+n}\limits E_{i,k}t^{-s-1} E_{k,i}t^{s+1},\ 
Q_i=\hbar\displaystyle\sum_{s \geq 0}\limits\sum_{k=1}^n E_{k,n+i}t^{-s}E_{n+i,k}t^{s}
\end{align*}
We fix integers $1\leq i\leq n$ and $1\leq j\leq m-1$. The affine Yangian $Y_{\hbar,\ve}(\widehat{\mathfrak{sl}}(n))$ (resp. $Y_{\hbar,\ve+n\hbar}(\widehat{\mathfrak{sl}}(m))$) can be generated by $\widetilde{H}_{i,1}$ (resp. $\widetilde{H}_{j,1}$) and $\{X^\pm_{k,0}\mid 0\leq k\leq n-1\}$ (resp. $\{X^\pm_{l,0}\mid 0\leq l\leq m-1\}$). Thus, it is enough to show the commutativity between $\Psi_1(\widetilde{H}_{i,1}),\Psi_1(X^\pm_{k,0})$ and $\Psi_2(\widetilde{H}_{j,1}),\Psi_2(X^\pm_{l,0})$. The commutativity between $\Psi_1(X^\pm_{k,0})$ and $\Psi_2(X^\pm_{l,0})$ is obvious. Thus, we will show the other cases in the following three subsections.

\subsection{Commutativity between $\Psi_1(\widetilde{H}_{i,1})$ and $\Psi_2(X^\pm_{l,0})$}
We only show the $+$ case. The $-$ case can be proven in a similar way. The case that $l\neq 0$ comes from the definition of $\Psi_1(\widetilde{H}_{i,1})$. We will consider the case that $l=0$.
By a direct computation, we obtain
\begin{align}
[\Psi_1(\widetilde{H}_{i,1}),E_{m+n,n+1}t]&=[J(h_i),E_{m+n,n+1}t]-[A_i-A_{i+1},E_{m+n,n+1}t]-[P_i-P_{i+1},E_{m+n,n+1}t].\label{9115}
\end{align}
Since $E_{m+n,n+1}t=[\cdots[X^+_{0,0},X^+_{1,0}],X^+_{2,0}],\cdots,X^+_{n,0}]=0$ holds, we find that $[J(h_i),E_{m+n,n+1}t]=0$ by Lemma~\ref{J}. By the definition of $A_i$, we obtain
\begin{align}
[A_i,E_{m+n,n+1}t]&=[\dfrac{\hbar}{2}\sum_{\substack{s\geq0\\u>i}}\limits E_{u,i}t^{-s}E_{i,u}t^s,E_{m+n,n+1}t]-[\dfrac{\hbar}{2}\sum_{\substack{s\geq0\\i<v}}\limits E_{i,v}t^{-s-1}E_{v,i}t^{s+1},E_{m+n,n+1}t]\nonumber\\
&=-\dfrac{\hbar}{2}E_{m+n,i}tE_{i,n+1}-\dfrac{\hbar}{2}E_{i,n+1}E_{m+n,i}t.\label{9115-1}
\end{align}
By the definition of $P_i$, we have
\begin{align}
[P_i,E_{m+n,n+1}t]&=[\hbar\displaystyle\sum_{s \geq 0} \limits\sum_{k=n+1}^{m+n}\limits E_{i,k}t^{-s-1} E_{k,i}t^{s+1},E_{m+n,n+1}t]=\hbar E_{i,n+1} E_{m+n,i}t.\label{9115-2}
\end{align}
By applying \eqref{9115-1} and \eqref{9115-2} to \eqref{9115}, we obtain
\begin{align*}
[\Psi(\widetilde{H}_{i,1}),E_{m+n,n+1}t]&=0-\dfrac{\hbar}{2}[E_{i,n+1}, E_{m+n,i}t]+\dfrac{\hbar}{2}[E_{i+1,n+1}, E_{m+n,i+1}t]=0.
\end{align*}
\subsection{Commutativity between $\Psi_2(\widetilde{H}_{j,1})$ and $\Psi_1(X^+_{k,0})$}
We only show the $+$ case. The $-$ case can be proven in a similar way. The case that $k\neq 0$ comes from the definition of $\Psi_2(\widetilde{H}_{j,1})$. We will consider the case that $k=0$.

By a direct computation, we obtain
\begin{align}
[\Psi_2(\widetilde{H}_{j,1}),E_{n,1}t]&=[J(h_{j+n}),E_{n,1}t]-[A_{j+n}-A_{j+n+1},E_{n,1}t]+[Q_i-Q_{i+1},E_{n,1}t].\label{9116}
\end{align}
Since $E_{n,1}t=[X^+_{n,0},[\cdots,[X^+_{n-1,0},X^+_{0,0}]\cdots]$ holds, we obtain $[J(h_{j+n}),E_{n,1}t]=0$ by Lemma~\ref{J}. By the definition of $A_i$, we have
\begin{align}
[A_{j+n},E_{n,1}t]&=-[\dfrac{\hbar}{2}\sum_{\substack{s\geq0\\j+n>v}}\limits E_{j+n,v}t^{-s}E_{v,j+n}t^s,E_{n,1}t]+[\dfrac{\hbar}{2}\sum_{\substack{s\geq0\\u<j+n}}\limits E_{u,j+n}t^{-s-1}E_{j+n,u}t^{s+1},E_{n,1}t]\nonumber\\
&=-\dfrac{\hbar}{2}E_{j+n,1}tE_{n,j+n}-\dfrac{\hbar}{2} E_{n,j+n}E_{j+n,1}t.\label{9116-1}
\end{align}
By the definition of $Q_i$, we obtain
\begin{align}
[Q_j,E_{n,1}t]&=[\hbar\displaystyle\sum_{s \geq 0}\limits\sum_{k=1}^n E_{k,j+n}t^{-s-1}E_{j+n,k}t^{s+1},E_{n,1}t]=-\hbar E_{n,j+n}E_{j+n,1}t.\label{9116-2}
\end{align}
Applying \eqref{9116-1} and \eqref{9116-2} to \eqref{9116}, we obtain
\begin{align*}
[\Psi_2(\widetilde{H}_{i,1}),E_{n,1}t]&=0+\dfrac{\hbar}{2}[E_{n,j+n},E_{j+n,1}t]-\dfrac{\hbar}{2}[E_{n,j+n+1},E_{j+n+1,1}t]=0.
\end{align*}
\subsection{Commutativity between $\Psi_1(\widetilde{H}_{i,1})$ and $\Psi_2(\widetilde{H}_{j,1})$}
By \eqref{Eq2.1} and the definition of $J(h_i)$, $\Psi_1$ and $\Psi_2$, we have
\begin{align*}
&\quad[\Psi_1(\widetilde{H}_{i,1}),\Psi_2(\widetilde{H}_{j,1})]\\
&=[\widetilde{H}_{i,1},\widetilde{H}_{j+n,1}]-[P_i,\widetilde{H}_{j+n,1}]+[\widetilde{H}_{i,1},Q_j]-[P_i-P_{i+1},Q_j-Q_{j+1}]\\
&=0-[P_i-P_{i+1},J(h_{j+n})]+[P_i-P_{i+1},A_{j+n}-A_{j+n+1}]\\
&\quad+[J(h_i),Q_j-Q_{i+1}]-[A_i-A_{i+1},Q_j-Q_{j+1}]-[P_i-P_{i+1},Q_j-Q_{j+1}].
\end{align*}
By Lemma~\ref{J}, we obtain $-[P_i-P_{i+1},J(h_{j+n})]+[J(h_i),Q_j-Q_{j+1}]=0$. Thus, it is enough to show the relation
\begin{equation}
[P_i,Q_j]+[A_i,Q_j]+[A_{j+n},P_i]=0.\label{9}
\end{equation}
We will compute each terms of the left hand side of \eqref{9}. By a direct computation, we obtain
\begin{align}
&\quad[P_i,Q_j]\nonumber\\
&=\hbar^2\displaystyle\sum_{s,u \geq 0} \limits\sum_{k=1}^{m+n}\limits E_{i,k}t^{-s-1} E_{k,j+n}t^{-u-1}E_{j+n,i}t^{s+u+2}\nonumber\\
&\quad-\hbar^2\displaystyle\sum_{s,u \geq 0} \limits\sum_{k=1}^{m+n}\limits E_{i,j+n}t^{-u-s-2}E_{j+n,k}t^{u+1}E_{k,i}t^{s+1}\nonumber\\
&\quad+\hbar^2\displaystyle\sum_{s,u \geq 0} \limits\sum_{k=n+1}^{m+n}\limits E_{i,k}t^{-s-u-1} E_{k,j+n}t^{s}E_{j+n,i}t^{u+1}-\hbar^2\displaystyle\sum_{s,u \geq 0} \limits\sum_{l=1}^nE_{i,j+n}t^{-s-1} E_{l,i}t^{-u}E_{j+n,l}t^{s+u+1}\nonumber\\
&\quad+\hbar^2\displaystyle\sum_{s,u \geq 0} \limits\sum_{l=1}^nE_{l,j+n}t^{-u-s-1}E_{i,l}t^{u}E_{j+n,i}t^{s+1}\nonumber\\
&\quad-\hbar^2\displaystyle\sum_{s,u \geq 0} \limits\sum_{k=n+1}^{m+n}\limits E_{i,j+n}t^{-u-1}E_{j+n,k}t^{-s}E_{k,i}t^{u+s+1}.\label{551-0}
\end{align}
By the definition of $A_i$, we can divide $[A_i,P_j]$ into four pieces:
\begin{align}
[A_{j+n},P_i]
&=[\dfrac{\hbar}{2}\sum_{\substack{s\geq0\\u>j+n}}\limits E_{u,j+n}t^{-s}E_{j+n,u}t^s,P_i]-[\dfrac{\hbar}{2}\sum_{\substack{s\geq0\\j+n>u}}\limits E_{j+n,u}t^{-s}E_{u,j+n}t^s,P_i]\nonumber\\
&\quad+[\dfrac{\hbar}{2}\sum_{\substack{s\geq0\\u<j+n}}\limits E_{u,j+n}t^{-s-1}E_{j+n,u}t^{s+1},P_i]-[\dfrac{\hbar}{2}\sum_{\substack{s\geq0\\j+n<u}}\limits E_{j+n,u}t^{-s-1}E_{u,j+n}t^{s+1},P_i].\label{9117}
\end{align}
We compute the right hand  side of \eqref{9117}. By \eqref{AP-1} and \eqref{AP-2}, we obtain
\begin{align}
&\quad[\dfrac{\hbar}{2}\sum_{\substack{s\geq0\\u>j+n}}\limits E_{u,j+n}t^{-s}E_{j+n,u}t^s,P_i]\nonumber\\
&=-\dfrac{\hbar^2}{2}\sum_{\substack{s,v\geq0\\u>j+n}}\limits E_{u,j+n}t^{-s-v-1}E_{i,u}t^{s}E_{j+n,i}t^{v+1}+\dfrac{\hbar^2}{2}\sum_{\substack{s,v\geq0\\u>j+n}}\limits E_{i,j+n}t^{-v-1} E_{u,i}t^{-s}E_{j+n,u}t^{s+v+1},\label{1551-1}\\
&\quad-[\dfrac{\hbar}{2}\sum_{\substack{s\geq0\\j+n>u}}\limits E_{j+n,u}t^{-s}E_{u,j+n}t^s,P_i]\nonumber\\
&=-\dfrac{\hbar^2}{2}\sum_{\substack{s,v\geq0}}\limits\sum_{u=1}^n
E_{i,j+n}t^{-v-1} E_{j+n,u}t^{-s}E_{u,i}t^{s+v+1}\nonumber\\
&\quad-\dfrac{\hbar^2}{2}\sum_{\substack{s,v\geq0}}\limits\sum_{k=j+n}^{m+n}E_{j+n,k}t^{-s-v-1}E_{i,j+n}t^sE_{k,i}t^{v+1}\nonumber\\
&\quad+\dfrac{\hbar^2}{2}\sum_{\substack{s,v\geq0}}\limits\sum_{u=1}^n 
E_{i,u}t^{-s-v-1}E_{u,j+n}t^sE_{j+n,i}t^{v+1}\nonumber\\
&\quad+\dfrac{\hbar^2}{2}\sum_{\substack{s,v\geq0}}\limits\sum_{k=j+n}^{m+n}E_{i,k}t^{-v-1} E_{j+n,i}t^{-s}E_{k,j+n}t^{s+v+1},\label{1551-2}\\
&\quad[\dfrac{\hbar}{2}\sum_{\substack{s\geq0\\u<j+n}}\limits E_{u,j+n}t^{-s-1}E_{j+n,u}t^{s+1},P_i]\nonumber\\
&=\dfrac{\hbar^2}{2}\sum_{\substack{s,v\geq0}}\limits\sum_{k=1}^{m+n} E_{i,j+n}t^{-s-2-v}E_{j+n,k}t^{s+1}E_{k,i}t^{v+1}\nonumber\\
&\quad-\dfrac{\hbar^2}{2}\sum_{\substack{s,v\geq0}}\limits\sum_{k=1}^{m+n} E_{i,k}t^{-v-1}E_{k,j+n}t^{-s-1}E_{j+n,i}t^{s+v+2}\nonumber\\
&\quad+\dfrac{\hbar^2}{2}\sum_{\substack{s,v\geq0}}\limits\sum_{k=n+1}^{m+n} E_{i,j+n}t^{-s-1}E_{j+n,k}t^{-v}E_{k,i}t^{s+v+1}-\dfrac{\hbar^2}{2}\sum_{\substack{s,v\geq0\\u<j+n}}\limits E_{u,j+n}t^{-s-v-1}E_{i,u}t^{s}E_{j+n,i}t^{v+1}\nonumber\\
&\quad+\dfrac{\hbar^2}{2}\sum_{\substack{s,v\geq0\\u<j+n}}\limits E_{i,j+n}t^{-v-1} E_{u,i}t^{-s}E_{j+n,u}t^{s+v+1}-\dfrac{\hbar^2}{2}\sum_{\substack{s,v\geq0}}\limits\sum_{k=n+1}^{m+n} E_{i,k}t^{-s-v-1}E_{k,j+n}t^{v}E_{j+n,i}t^{s+1}\label{1551-3}\\
&\quad-[\dfrac{\hbar}{2}\sum_{\substack{s,v\geq0\\j+n<u}}\limits E_{j+n,u}t^{-s-1}E_{u,j+n}t^{s+1},P_i]\nonumber\\
&=\dfrac{\hbar^2}{2}\sum_{\substack{s,v\geq0\\j+n<u}}\limits E_{j+n,u}t^{-s-v-1}E_{i,j+n}t^{s}E_{u,i}t^{v+1}-\dfrac{\hbar^2}{2}\sum_{\substack{s,v\geq0\\j+n<u}}\limits E_{i,u}t^{-v-1}E_{j+n,i}t^{-s}E_{u,j+n}t^{s+v+1}.\label{1551-4}
\end{align}
By the definition of $Q_i$, we have
\begin{align}
[A_i,Q_j]
&=[\dfrac{\hbar}{2}\sum_{\substack{s\geq0\\u>i}}\limits E_{u,i}t^{-s}E_{i,u}t^s,Q_j]-[\dfrac{\hbar}{2}\sum_{\substack{s\geq0\\i>u}}\limits E_{i,u}t^{-s}E_{u,i}t^s,Q_j]\nonumber\\
&\quad+[\dfrac{\hbar}{2}\sum_{\substack{s\geq0\\u<i}}\limits E_{u,i}t^{-s-1}E_{i,u}t^{s+1},Q_j]-[\dfrac{\hbar}{2}\sum_{\substack{s\geq0\\i<u}}\limits E_{i,u}t^{-s-1}E_{u,i}t^{s+1},Q_j].\label{9112}
\end{align}
We compute the right hand side of \eqref{9112}. By a direct computation, we obtain
\begin{align}
&\quad[\dfrac{\hbar}{2}\sum_{\substack{s\geq0\\u>i}}\limits E_{u,i}t^{-s}E_{i,u}t^s,Q_j]\nonumber\\
&=-\dfrac{\hbar^2}{2}\sum_{\substack{s,v\geq0\\}}\limits\sum_{l=1}^{i} E_{l,i}t^{-s-v-1}E_{i,j+n}t^sE_{j+n,l}t^{v+1}\nonumber\\
&\quad-\dfrac{\hbar^2}{2}\sum_{\substack{s,v\geq0}}\limits\sum_{u=n+1}^{m+n} E_{i,j+n}t^{-v-1}E_{u,i}t^{-s}E_{j+n,u}t^{s+v+1}\nonumber\\
&\quad+\dfrac{\hbar^2}{2}\sum_{\substack{s,v\geq0}}\limits\sum_{u=n+1}^{m+n} E_{u,j+n}t^{-s-v-1}E_{i,u}t^sE_{j+n,i}t^{v+1}\nonumber\\
&\quad+\dfrac{\hbar^2}{2}\sum_{\substack{s,v\geq0}}\limits\sum_{l=1}^i E_{l,j+n}t^{-v-1}E_{j+n,i}t^{-s}E_{i,l}t^{s+v+1},\label{551-1}\\
&\quad-[\dfrac{\hbar}{2}\sum_{\substack{s\geq0\\i>u}}\limits E_{i,u}t^{-s}E_{u,i}t^s,Q_j]\nonumber\\
&=-\dfrac{\hbar^2}{2}\sum_{\substack{s,v\geq0\\i>u}}\limits E_{i,u}t^{-s-v-1}E_{u,j+n}t^{s}E_{j+n,i}t^{v+1}+\dfrac{\hbar^2}{2}\sum_{\substack{s,v\geq0\\i>u}}\limits E_{i,j+n}t^{-v-1}E_{j+n,u}t^{-s}E_{u,i}t^{s+v+1},\label{551-2}\\
&\quad[\dfrac{\hbar}{2}\sum_{\substack{s\geq0\\u<i}}\limits E_{u,i}t^{-s-1}E_{i,u}t^{s+1},Q_j]\nonumber\\
&=\dfrac{\hbar^2}{2}\sum_{\substack{s,v\geq0\\u<i}}\limits E_{u,i}t^{-s-1-v}E_{i,j+n}t^{s}E_{j+n,u}t^{v+1}-\dfrac{\hbar^2}{2}\sum_{\substack{s,v\geq0\\u<i}}\limits E_{u,j+n}t^{-v-1}E_{j+n,i}t^{-s}E_{i,u}t^{v+s+1},\label{551-3}\\
&\quad[\dfrac{\hbar}{2}\sum_{\substack{s\geq0\\u<i}}\limits E_{u,i}t^{-s-1}E_{i,u}t^{s+1},Q_j]\nonumber\\
&=-\dfrac{\hbar^2}{2}\sum_{\substack{s,v\geq0}}\limits\sum_{l=1}^{m+n}\limits E_{l,j+n}t^{-v-1}E_{i,l}t^{-s-1}E_{j+n,i}t^{v+s+2}\nonumber\\
&\quad+\dfrac{\hbar^2}{2}\sum_{\substack{s,v\geq0}}\limits\sum_{l=1}^{m+n} E_{i,j+n}t^{-s-v-2}E_{l,i}t^{s+1}E_{j+n,l}t^{v+1}\nonumber\\
&\quad-\dfrac{\hbar^2}{2}\sum_{\substack{s,v\geq0\\i<u}}\limits E_{i,u}t^{-s-v-1}E_{u,j+n}t^{s}E_{j+n,i}t^{v+1}+\dfrac{\hbar^2}{2}\sum_{\substack{s,v\geq0}}\limits\sum_{l=1}^n E_{i,j+n}t^{-s-1}E_{l,i}t^{-v}E_{j+n,l}t^{s+v+1}\nonumber\\
&\quad-\dfrac{\hbar^2}{2}\sum_{\substack{s,v\geq0}}\limits\sum_{l=1}^n E_{l,j+n}t^{-v-s-1}E_{i,l}t^{v}E_{j+n,i}t^{s+1}+\dfrac{\hbar^2}{2}\sum_{\substack{s,v\geq0\\i<u}}\limits E_{i,j+n}t^{-v-1}E_{j+n,u}t^{-s}E_{u,i}t^{v+s+1}.\label{551-4}
\end{align}
We compute the sum of \eqref{551-0}, \eqref{1551-1}-\eqref{1551-4} and \eqref{551-1}-\eqref{551-4} into eight picies as follows: 
\begin{align*}
&\quad\eqref{1551-2}_2+\eqref{1551-4}_1+\eqref{551-1}_1+\eqref{551-3}_1\\
&=-\dfrac{\hbar^2}{2}\sum_{\substack{s,v\geq0}}\limits E_{j+n,j+n}t^{-s-v-1}E_{i,j+n}t^sE_{j+n,i}t^{v+1}-\dfrac{\hbar^2}{2}\sum_{\substack{s,v\geq0\\}}\limits E_{i,i}t^{-s-v-1}E_{i,j+n}t^sE_{j+n,i}t^{v+1},\\
&\quad\eqref{551-0}_2+\eqref{1551-3}_1+\eqref{551-4}_2=-(m+n)\dfrac{\hbar^2}{2}\sum_{\substack{s\geq0}}\limits (s+1)E_{i,j+n}t^{-s-2}E_{j+n,i}t^{s+2},\\
&\quad\eqref{551-0}_4+\eqref{1551-1}_2+\eqref{1551-3}_5+\eqref{551-1}_2+\eqref{551-4}_4\\
&=-\dfrac{\hbar^2}{2}\sum_{\substack{s,v\geq0}}\limits E_{i,j+n}t^{-v-1}E_{j+n,i}t^{-s}E_{j+n,j+n}t^{s+v+1},\\
&\quad\eqref{551-0}_6+\eqref{1551-2}_1+\eqref{1551-3}_3+\eqref{551-2}_2+\eqref{551-4}_6=-\dfrac{\hbar^2}{2}\sum_{\substack{s,v\geq0}}\limits E_{i,j+n}t^{-v-1} E_{j+n,i}t^{-s}E_{i,i}t^{s+v+1},\\
&\quad\eqref{551-0}_1+\eqref{1551-3}_2+\eqref{551-4}_1=(m+n)\dfrac{\hbar^2}{2}\sum_{\substack{s\geq0}}\limits (s+1)E_{i,j+n}t^{-s-2}E_{j+n,i}t^{s+2},\\
&\quad\eqref{551-0}_3+\eqref{1551-2}_3+\eqref{1551-3}_6+\eqref{551-2}_1+\eqref{551-4}_3=\dfrac{\hbar^2}{2}\displaystyle\sum_{s,u \geq 0} \limits E_{i,i}t^{-s-u-1} E_{i,j+n}t^{s}E_{j+n,i}t^{u+1},\\
&\quad\eqref{551-0}_5+\eqref{1551-1}_1+\eqref{1551-3}_4+\eqref{551-1}_3+\eqref{551-4}_5=\dfrac{\hbar^2}{2}\displaystyle\sum_{s,u \geq 0} \limits E_{j+n,j+n}t^{-u-s-1}E_{i,j+n}t^{u}E_{j+n,i}t^{s+1},\\
&\quad\eqref{1551-2}_4+\eqref{1551-4}_2+\eqref{551-1}_4+\eqref{551-3}_1\\
&=\dfrac{\hbar^2}{2}\sum_{\substack{s\geq0}}\limits E_{i,j+n}t^{-v-1} E_{j+n,i}t^{-s}E_{j+n,j+n}t^{s+v+1}+\dfrac{\hbar^2}{2}\sum_{\substack{s,v\geq0}}\limits E_{i,j+n}t^{-v-1}E_{j+n,i}t^{-s}E_{i,i}t^{s+v+1}.
\end{align*}
Since the sum of the eight equations above is equal to zero, we have shown \eqref{9}.
\end{proof}

\section{Application to the evaluation map for the affine Yangian}
The evaluation map for the affine Yangian is a a non-trivial homomorphism from the affine Yangian $Y_{\hbar,\ve}(\widehat{\mathfrak{sl}}(n))$ to the completion of the universal enveloping algebra of the affinization of $\mathfrak{gl}(n)$. We set a Lie algebra 
\begin{equation*}
\widehat{\mathfrak{gl}}(n)=\mathfrak{gl}(n)\otimes\mathbb{C}[z^{\pm1}]\oplus\mathbb{C}\tilde{c}\oplus\mathbb{C}z
\end{equation*}
whose commutator relations are given by
\begin{gather*}
[x\otimes t^u, y\otimes t^v]=\begin{cases}
[x,y]\otimes t^{u+v}+\delta_{u+v,0}u\text{tr}(xy)\tilde{c}\ \text{ if }x,y\in\mathfrak{sl}(n),\\
[e_{a,b},e_{i,i}]\otimes t^{u+v}+\delta_{u+v,0}u\text{tr}(E_{a,b}E_{i,i})\tilde{c}+\delta_{u+v,0}\delta_{a,b}uz\\
\qquad\qquad\qquad\qquad\qquad\qquad\qquad\qquad\qquad\text{ if }x=e_{a,b},\ y=e_{i,i},
\end{cases}\\
\text{$z$ and $\tilde{c}$ are central elements of }\widehat{\mathfrak{gl}}(n),
\end{gather*}
where tr is a trace of $\mathfrak{gl}(n)$, that is, $\text{tr}(E_{i,j}E_{k,l})=\delta_{i,l}\delta_{j,k}$. 

We consider a completion of $U(\widehat{\mathfrak{gl}}(n))/U(\widehat{\mathfrak{gl}}(n))(z-1)$ following \cite{MNT} and \cite{GNW}. 
We take the grading of $U(\widehat{\mathfrak{gl}}(n))/U(\widehat{\mathfrak{gl}}(n))(z-1)$ as $\text{deg}(X(s))=s$ and $\text{deg}(\tilde{c})=0$. We denote the degreewise completion of $U(\widehat{\mathfrak{gl}}(n))/U(\widehat{\mathfrak{gl}}(n))(z-1)$ by $\mathcal{U}(\widehat{\mathfrak{gl}}(n))$.
\begin{Theorem}[Theorem 3.8 in \cite{K1} and Theorem 4.18 in \cite{K2}]\label{thm:main}
\begin{enumerate}
\item Suppose that $\tilde{c} =\dfrac{\ve}{\hbar}$.
Then, there exists an algebra homomorphism 
\begin{equation*}
\ev_{\hbar,\ve}^n \colon Y_{\hbar,\ve}(\widehat{\mathfrak{sl}}(n)) \to \mathcal{U}(\widehat{\mathfrak{gl}}(n))
\end{equation*}
uniquely determined by 
\begin{gather*}
	\ev_{\hbar,\ve}^n(X_{i,0}^{+}) = \begin{cases}
E_{n,1}t&\text{ if }i=0,\\
E_{i,i+1}&\text{ if }1\leq i\leq n-1,
\end{cases} \ev_{\hbar,\ve}^n(X_{i,0}^{-}) = \begin{cases}
E_{1,n}t^{-1}&\text{ if }i=0,\\
E_{i+1,i}&\text{ if }1\leq i\leq n-1,
\end{cases}\\ \ev_{\hbar,\ve}^n(H_{i,0}) =\begin{cases}
E_{n,n}-E_{1,1}+\tilde{c}&\text{ if }i=0,\\
E_{i,i}-E_{i+1,i+1}&\text{ if }1\leq i\leq n-1.
\end{cases}
\end{gather*}
and
\begin{align*}
\ev_{\hbar,\ve}^n(H_{i,1}) &=-\dfrac{i}{2}\hbar \ev_{\hbar,\ve}^n(H_{i,0}) -\hbar E_{i,i}E_{i+1,i+1} \\
&\quad+ \hbar \displaystyle\sum_{s \geq 0}  \limits\displaystyle\sum_{k=1}^{i}\limits  E_{i,k}t^{-s}E_{k,i}t^s+\hbar \displaystyle\sum_{s \geq 0} \limits\displaystyle\sum_{k=i+1}^{n}\limits  E_{i,k}t^{-s-1}E_{k,i}t^{s+1}\\
&\quad-\hbar\displaystyle\sum_{s \geq 0}\limits\displaystyle\sum_{k=1}^{i}\limits E_{i+1,k}t^{-s} E_{k,i+1}t^{s}-\hbar\displaystyle\sum_{s \geq 0}\limits\displaystyle\sum_{k=i+1}^{n} \limits E_{i+1,k}t^{-s-1} E_{k,i+1}t^{s+1}
\end{align*}
for $i\neq0$.
\item In the case that $\ve\neq 0$, the image of the evaluation map is dense in $\mathcal{U}(\widehat{\mathfrak{gl}}(n))$.
\end{enumerate}
\end{Theorem}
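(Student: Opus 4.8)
The plan is to establish the two parts separately: well-definedness of $\ev^n_{\hbar,\ve}$ as an algebra homomorphism, and density of its image. For part (1) I would verify that the prescribed images of the generators satisfy every defining relation of $Y_{\hbar,\ve}(\widehat{\mathfrak{sl}}(n))$ inside $\mathcal U(\widehat{\mathfrak{gl}}(n))$.

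First I would reduce the number of relations to check. Since \eqref{Eq2.5}--\eqref{Eq2.7} express $X^\pm_{j,1}$ through $\widetilde H_{i,1}$ and $X^\pm_{j,0}$, the algebra is generated by $\{H_{i,0},H_{i,1},X^\pm_{i,0}\}$; working with the minimalistic presentation of \cite{GNW} (from which Definition~\ref{Prop32} is derived) reduces the task to a finite list of relations among these generators. The degree-zero assignment is exactly the standard realization of $\widehat{\mathfrak{sl}}(n)$ by the loop generators $E_{i,i+1},E_{i+1,i}$ and $E_{n,1}t,E_{1,n}t^{-1}$ inside $\widehat{\mathfrak{gl}}(n)$, so relations \eqref{Eq2.2}, \eqref{Eq2.4} at $r=0$, and the Serre relations \eqref{Eq2.10} hold automatically.

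The content lies in the relations involving $H_{i,1}$. I would compute the bracket $[\ev^n_{\hbar,\ve}(H_{i,1}),\ev^n_{\hbar,\ve}(X^\pm_{j,0})]$ directly; this both confirms \eqref{Eq2.4}--\eqref{Eq2.7} at $r=1$ and exhibits $\ev^n_{\hbar,\ve}(X^\pm_{j,1})$ as an explicit quadratic element of $\mathcal U(\widehat{\mathfrak{gl}}(n))$, which then feeds into \eqref{Eq2.3} and \eqref{Eq2.8}. The genuinely hard relations are \eqref{Eq2.1} in the form $[\ev^n_{\hbar,\ve}(H_{i,1}),\ev^n_{\hbar,\ve}(H_{j,1})]=0$ together with the twisted relations \eqref{Eq2.6}, \eqref{Eq2.7}, \eqref{Eq2.9}. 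Here the commutators of the completed normally ordered sums $\sum_{s\geq0}E_{i,k}t^{-s}E_{k,i}t^{s}$ telescope, and the surviving finite boundary terms must be combined with the contributions of the central element $\tilde c$ and of $z$ coming from the anomaly $\delta_{u+v,0}u\,\tr(xy)\tilde c$ in the commutator of $\widehat{\mathfrak{gl}}(n)$. Under the specialization $\tilde c=\ve/\hbar$ these anomaly terms are precisely what reproduce the shift $\ve+\tfrac n2\hbar$ on the right-hand sides of \eqref{Eq2.6}, \eqref{Eq2.7}, \eqref{Eq2.9}, and they also pin down the linear coefficient $-\tfrac i2\hbar$ in the formula for $\ev^n_{\hbar,\ve}(H_{i,1})$. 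I expect this matching of boundary and central corrections, together with the convergence bookkeeping in the degreewise completion, to be the main obstacle.

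For part (2), I would show the image is dense when $\ve\neq0$. The degree-zero image already generates a copy of $\widehat{\mathfrak{sl}}(n)$, namely all of $\mathfrak{sl}(n)\otimes\mathbb C[t^{\pm1}]$ together with $\tilde c$ (the latter appearing in $[E_{n,1}t,E_{1,n}t^{-1}]$). Bracketing $\ev^n_{\hbar,\ve}(H_{i,1})$ against these loop generators and taking suitable combinations produces the remaining generators of $\mathfrak{gl}(n)\otimes\mathbb C[t^{\pm1}]$, in particular the trace direction $\sum_i E_{i,i}t^{s}$, modulo terms of strictly higher degree; here nonvanishing of $\ve$ makes the relevant coefficients invertible, so one can solve for each homogeneous generator and close up degree by degree in $\mathcal U(\widehat{\mathfrak{gl}}(n))$, yielding a dense subalgebra. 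Uniqueness is immediate, since the homomorphism is prescribed on a generating set.
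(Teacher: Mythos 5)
There is nothing in the paper to compare your argument against: Theorem~\ref{thm:main} is not proved in this article at all, but imported from the literature (it is Theorem 3.8 of \cite{K1} and Theorem 4.18 of \cite{K2}; the map goes back to Guay \cite{Gu1}), and the paper only uses it as input for the later corollaries. So your proposal can only be judged on its own terms. For part (1) your strategy is the correct and standard one: reduce to the minimalistic presentation of \cite{GNW}, note that the degree-zero assignment is just the loop realization of $\widehat{\mathfrak{sl}}(n)$ inside $\widehat{\mathfrak{gl}}(n)$, and then check the relations involving $H_{i,1}$. But as written it is a plan, not a proof. The entire content of the theorem is exactly what you defer: verifying $[\ev^n_{\hbar,\ve}(H_{i,1}),\ev^n_{\hbar,\ve}(H_{j,1})]=0$ and the twisted relations \eqref{Eq2.6}, \eqref{Eq2.7}, \eqref{Eq2.9}, which requires commuting the infinite normally ordered sums in the degreewise completion, collecting the telescoping boundary terms, and matching the central contributions against $\ve+\frac{n}{2}\hbar$ under $\tilde c=\ve/\hbar$. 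You correctly identify this as ``the main obstacle'' and then stop; for scale, the paper's own Theorem~\ref{Main}, an analogous statement, needs several pages to verify just two of the defining relations. Deferring these computations leaves the proof of part (1) essentially empty.

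Part (2) contains a more concrete gap. Your mechanism---commutators of the quadratic sums in $\ev^n_{\hbar,\ve}(H_{i,1})$ with loop elements produce central terms whose coefficients are proportional to $m\tilde c=m\ve/\hbar$, so for $\ve\neq0$ one can solve for the Heisenberg modes---does produce $\sum_i E_{i,i}t^m$ for $m\neq 0$. But it can never produce the zero mode $\sum_i E_{i,i}t^0$: the $2$-cocycle of $\widehat{\mathfrak{gl}}(n)$ is proportional to the loop degree, and $[\mathfrak{gl}(n),\mathfrak{gl}(n)]=\mathfrak{sl}(n)$, so no iterated bracket of image elements ever yields the trace direction in degree zero. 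Hence ``bracketing \ldots and taking suitable combinations produces the remaining generators \ldots in particular the trace direction $\sum_i E_{i,i}t^s$'' is false at $s=0$, and the plan to ``solve for each homogeneous generator and close up degree by degree'' breaks down precisely there. Reaching the degree-zero part of the target requires a genuinely different ingredient (non-Lie contractions of products such as $E_{n,1}t\cdot\ev^n_{\hbar,\ve}(H_{i,1})\cdot E_{1,n}t^{-1}$, where $\tilde c\neq 0$ again enters), and this is exactly the delicate point of the density statement; your sketch does not address it.
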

We note that $U(\widehat{\mathfrak{gl}}(n))$ can be embedded into $U(\widehat{\mathfrak{gl}}(m+n))$ by $\tilde{c}\mapsto\tilde{c}$ and $E_{i,j}t^s\mapsto E_{i,j}t^s$ for $i\neq j$. By the definition of the evaluation map and $\Psi_1$, we obtain the following theorem.
\begin{Theorem}
The following relation holds:
\begin{equation*}
\ev^{m+n}_{\hbar,\ve}\circ\Psi_1=\ev^n_{\hbar,\ve}.
\end{equation*}
\end{Theorem}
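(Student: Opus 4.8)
The plan is to exploit that both $\ev^{m+n}_{\hbar,\ve}\circ\Psi_1$ and $\ev^n_{\hbar,\ve}$ are algebra homomorphisms out of $Y_{\hbar,\ve}(\widehat{\mathfrak{sl}}(n))$ (here $\ev^{m+n}_{\hbar,\ve}$ is first extended to the degreewise completion $\widetilde{Y}_{\hbar,\ve}(\widehat{\mathfrak{sl}}(m+n))$; since it is degree-preserving it does so continuously, and the infinite $s$-sums occurring in $\Psi_1$ may be evaluated termwise). It therefore suffices to check the claimed identity on a generating set. I would take $\{X^\pm_{i,0}\mid 0\le i\le n-1\}\cup\{H_{i,1}\mid 1\le i\le n-1\}$: relation \eqref{Eq2.2} recovers $H_{i,0}$, relation \eqref{Eq2.5} together with $\widetilde{H}_{i,1}=H_{i,1}-\tfrac{\hbar}{2}H_{i,0}^2$ recovers $X^\pm_{i,1}$ for $i\neq0$ and, taking $(i,j)=(1,0)$ in \eqref{Eq2.5}, also $X^\pm_{0,1}$, whence \eqref{Eq2.3} yields $H_{0,1}$. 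In this way I never need to treat the affine node in degree one directly.

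For the degree-zero generators I would first record the elementary fact that the composite $U(\widehat{\mathfrak{sl}}(m+n))\xrightarrow{}Y_{\hbar,\ve}(\widehat{\mathfrak{sl}}(m+n))\xrightarrow{\ev^{m+n}_{\hbar,\ve}}\mathcal{U}(\widehat{\mathfrak{gl}}(m+n))$ is the natural map induced by $\widehat{\mathfrak{sl}}(m+n)\hookrightarrow\widehat{\mathfrak{gl}}(m+n)$; this is verified on the Chevalley generators, on which $\ev^{m+n}_{\hbar,\ve}$ returns exactly the matrix realizations of Theorem~\ref{thm:main}. Now each $\Psi_1(X^\pm_{i,0})$ is (the image of) a root vector of $\widehat{\mathfrak{sl}}(m+n)$ equal as a matrix to $\ev^n_{\hbar,\ve}(X^\pm_{i,0})$ (for $i=0$, the element $E_{n,1}t$; for $i\neq0$, the element $E_{i,i+1}$). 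Applying $\ev^{m+n}_{\hbar,\ve}$ therefore returns that same matrix, which is $\ev^n_{\hbar,\ve}(X^\pm_{i,0})$ under the embedding $\mathcal{U}(\widehat{\mathfrak{gl}}(n))\hookrightarrow\mathcal{U}(\widehat{\mathfrak{gl}}(m+n))$, so the identity holds on all degree-zero generators.

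The essential step is the degree-one generators $H_{i,1}$ with $i\neq0$. Here I would apply $\ev^{m+n}_{\hbar,\ve}$ to the explicit formula for $\Psi_1(H_{i,1})$. The leading term $\ev^{m+n}_{\hbar,\ve}(H_{i,1})$ is the formula of Theorem~\ref{thm:main} at rank $m+n$; its pieces $-\tfrac{i}{2}\hbar\,\ev^{m+n}_{\hbar,\ve}(H_{i,0})$, $-\hbar E_{i,i}E_{i+1,i+1}$ and the sums $\sum_{k=1}^{i}$ involve only indices $\le n$ and hence already coincide with the corresponding pieces of $\ev^n_{\hbar,\ve}(H_{i,1})$. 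The two correction sums $\mp\hbar\sum_{s\ge0}\sum_{k=n+1}^{m+n}E_{\bullet,k}t^{-s-1}E_{k,\bullet}t^{s+1}$ added by $\Psi_1(H_{i,1})$ lie in $U(\widehat{\mathfrak{sl}}(m+n))$ (each factor is off-diagonal), so by the previous paragraph they are sent to themselves and precisely cancel the $k\in\{n+1,\dots,m+n\}$ tails of the sums $\sum_{k=i+1}^{m+n}$ in $\ev^{m+n}_{\hbar,\ve}(H_{i,1})$. What remains is exactly the rank-$n$ formula, namely $\ev^n_{\hbar,\ve}(H_{i,1})$.

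The only genuine point of care I anticipate is the bookkeeping of summation ranges in the last step, together with confirming the continuity of $\ev^{m+n}_{\hbar,\ve}$ on the completion so that termwise evaluation of the infinite $s$-sums is legitimate. No new relation has to be checked: the map $\Psi_1$ was built precisely so as to absorb the extra $k\in\{n+1,\dots,m+n\}$ contributions of the larger evaluation map, and once naturality of $\ev^{m+n}_{\hbar,\ve}$ on $U(\widehat{\mathfrak{sl}}(m+n))$ is in hand the cancellation is forced.
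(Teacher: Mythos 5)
Your proposal is correct and follows essentially the same route as the paper, which simply asserts the theorem ``by the definition of the evaluation map and $\Psi_1$'': your verification on the generating set $\{X^{\pm}_{i,0}\}\cup\{H_{i,1}\mid i\neq 0\}$, using that $\ev^{m+n}_{\hbar,\ve}$ restricts to the natural map on the image of $U(\widehat{\mathfrak{sl}}(m+n))$ so that the correction sums of $\Psi_1(H_{i,1})$ cancel the $k\in\{n+1,\dots,m+n\}$ tails of $\ev^{m+n}_{\hbar,\ve}(H_{i,1})$, is exactly the computation the paper leaves implicit. Your attention to extending $\ev^{m+n}_{\hbar,\ve}$ continuously to the degreewise completion is a point the paper also glosses over, and it is handled correctly.
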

Let us set the centralizer algebra
\begin{align*}
C(\widehat{\mathfrak{gl}}(m+n),\widehat{\mathfrak{gl}}(n))&=\{x\in \mathcal{U}(\widehat{\mathfrak{gl}}(m+n))\mid[x,U(\widehat{\mathfrak{gl}}(n))]=0\}.
\end{align*}
Theorem~\ref{Main-theorem} and Theorem~\ref{thm:main} induces the following corollary.
\begin{Corollary}
In the case that $\ve\neq0$, we obtain a homomorphism
\begin{equation*}
\ev^{m+n}_{\hbar,\ve}\circ\Psi_2\colon Y_{\hbar,\ve+n\hbar}(\widehat{\mathfrak{sl}}(m))\to C(\widehat{\mathfrak{gl}}(m+n),\widehat{\mathfrak{gl}}(n)).
\end{equation*}
\end{Corollary}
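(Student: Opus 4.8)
The map $\ev^{m+n}_{\hbar,\ve}\circ\Psi_2$ is a composite of two algebra homomorphisms: $\Psi_2$ takes values in the degreewise completion $\widetilde{Y}_{\hbar,\ve}(\widehat{\mathfrak{sl}}(m+n))$, and $\ev^{m+n}_{\hbar,\ve}$ is degree-preserving and hence extends continuously to that completion, landing in $\mathcal{U}(\widehat{\mathfrak{gl}}(m+n))$. So the composite is itself an algebra homomorphism, and the only thing to verify is that its image lies in the centralizer $C(\widehat{\mathfrak{gl}}(m+n),\widehat{\mathfrak{gl}}(n))$; that is, for every $a\in Y_{\hbar,\ve+n\hbar}(\widehat{\mathfrak{sl}}(m))$ the element $\ev^{m+n}_{\hbar,\ve}(\Psi_2(a))$ commutes with all of $U(\widehat{\mathfrak{gl}}(n))$, regarded inside $\mathcal{U}(\widehat{\mathfrak{gl}}(m+n))$ via the block embedding $\widehat{\mathfrak{gl}}(n)\hookrightarrow\widehat{\mathfrak{gl}}(m+n)$.

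The plan is to transport the commutativity of $\Psi_1$ and $\Psi_2$ from Theorem~\ref{Main-theorem} through the evaluation map and then invoke the density statement of Theorem~\ref{thm:main}. First I would use Theorem~\ref{Main-theorem}: the subalgebras $\Psi_1(Y_{\hbar,\ve}(\widehat{\mathfrak{sl}}(n)))$ and $\Psi_2(Y_{\hbar,\ve+n\hbar}(\widehat{\mathfrak{sl}}(m)))$ commute inside $\widetilde{Y}_{\hbar,\ve}(\widehat{\mathfrak{sl}}(m+n))$. Applying the homomorphism $\ev^{m+n}_{\hbar,\ve}$ to the relation $[\Psi_1(b),\Psi_2(a)]=0$ shows that $\ev^{m+n}_{\hbar,\ve}(\Psi_2(a))$ commutes with every element of $\ev^{m+n}_{\hbar,\ve}(\Psi_1(Y_{\hbar,\ve}(\widehat{\mathfrak{sl}}(n))))$. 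By the theorem immediately preceding this corollary, $\ev^{m+n}_{\hbar,\ve}\circ\Psi_1=\ev^n_{\hbar,\ve}$, so this image is exactly $\mathrm{Im}(\ev^n_{\hbar,\ve})$. Hence $x:=\ev^{m+n}_{\hbar,\ve}(\Psi_2(a))$ commutes with the whole image of the evaluation map $\ev^n_{\hbar,\ve}$.

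It remains to upgrade ``commutes with $\mathrm{Im}(\ev^n_{\hbar,\ve})$'' to ``commutes with $U(\widehat{\mathfrak{gl}}(n))$''. Here I would use the second part of Theorem~\ref{thm:main}: since $\ve\neq0$, the image of $\ev^n_{\hbar,\ve}$ is dense in $\mathcal{U}(\widehat{\mathfrak{gl}}(n))$. In the degreewise completion multiplication is continuous, so for the fixed element $x$ the adjoint map $[x,-]$ is continuous; moreover the block embedding $\widehat{\mathfrak{gl}}(n)\hookrightarrow\widehat{\mathfrak{gl}}(m+n)$ is degree-preserving and so extends to a continuous embedding $\mathcal{U}(\widehat{\mathfrak{gl}}(n))\hookrightarrow\mathcal{U}(\widehat{\mathfrak{gl}}(m+n))$. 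Since $U(\widehat{\mathfrak{gl}}(n))$ sits inside its completion $\mathcal{U}(\widehat{\mathfrak{gl}}(n))=\overline{\mathrm{Im}(\ev^n_{\hbar,\ve})}$, each $y\in U(\widehat{\mathfrak{gl}}(n))$ is a limit of elements $y_k\in\mathrm{Im}(\ev^n_{\hbar,\ve})$, whence $[x,y]=\lim_k[x,y_k]=0$. Therefore $x\in C(\widehat{\mathfrak{gl}}(m+n),\widehat{\mathfrak{gl}}(n))$, as required.

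The algebraic core of the argument is immediate once Theorem~\ref{Main-theorem} and the identity $\ev^{m+n}_{\hbar,\ve}\circ\Psi_1=\ev^n_{\hbar,\ve}$ are in hand. The genuinely delicate point, and the step I expect to require the most care, is the closure argument of the last paragraph: one must make the continuity of the bracket in the degreewise topology precise and check that an approximation in $\mathcal{U}(\widehat{\mathfrak{gl}}(n))$ remains an approximation after the embedding into $\mathcal{U}(\widehat{\mathfrak{gl}}(m+n))$, so that the limit can legitimately be passed through the commutator. This topological passage is exactly what is needed to reach the full $\widehat{\mathfrak{gl}}(n)$ — in particular the Heisenberg/trace part, which is not literally contained in $\mathrm{Im}(\ev^n_{\hbar,\ve})$ but only in its closure.
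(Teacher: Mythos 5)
Your proposal is correct and is precisely the argument the paper intends: the paper states the corollary as an immediate consequence of Theorem~\ref{Main-theorem} and Theorem~\ref{thm:main}, i.e.\ the commutativity of $\Psi_1(Y_{\hbar,\ve}(\widehat{\mathfrak{sl}}(n)))$ with $\Psi_2(Y_{\hbar,\ve+n\hbar}(\widehat{\mathfrak{sl}}(m)))$, pushed through $\ev^{m+n}_{\hbar,\ve}$ via the identity $\ev^{m+n}_{\hbar,\ve}\circ\Psi_1=\ev^n_{\hbar,\ve}$, and then upgraded from the image of $\ev^n_{\hbar,\ve}$ to all of $U(\widehat{\mathfrak{gl}}(n))$ by the density statement (which is exactly where $\ve\neq0$ is used) together with continuity of the bracket in the degreewise completion. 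Your filled-in continuity/closure argument is the content the paper leaves implicit, and it is sound.
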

This result can be interpreted from the perspective of a vertex algebra. For a vertex algebra $V$, we denote the generating field associated with $v\in V$ by $v(z)=\displaystyle\sum_{n\in\mathbb{Z}}\limits v_{(n)}z^{-n-1}$. We also denote the OPE of $V$ by
\begin{equation*}
u(z)v(w)\sim\displaystyle\sum_{s\geq0}\limits \dfrac{(u_{(s)}v)(w)}{(z-w)^{s+1}}
\end{equation*}
for all $u, v\in V$. We denote the vacuum vector (resp.\ the translation operator) by $|0\rangle$ (resp.\ $\partial$).

Let us recall the definition of a universal enveloping algebra of a vertex algebra in the sense of \cite{FZ} and \cite{MNT}.
For any vertex algebra $V$, let $L(V)$ be the Borcherds Lie algebra, that is,
\begin{align}
 L(V)=V{\otimes}\mathbb{C}[t,t^{-1}]/\text{Im}(\partial\otimes\id +\id\otimes\frac{d}{d t})\label{844},
\end{align}
where the commutation relation is given by
\begin{align*}
 [ut^a,vt^b]=\sum_{r\geq 0}\begin{pmatrix} a\\r\end{pmatrix}(u_{(r)}v)t^{a+b-r}
\end{align*}
for all $u,v\in V$ and $a,b\in \mathbb{Z}$. 
\begin{Definition}[Section~6 in \cite{MNT}]\label{Defi}
We set $\mathcal{U}(V)$ as the quotient algebra of the standard degreewise completion of the universal enveloping algebra of $L(V)$ by the completion of the two-sided ideal generated by
\begin{gather}
(u_{(a)}v)t^b-\sum_{i\geq 0}
\begin{pmatrix}
 a\\i
\end{pmatrix}
(-1)^i(ut^{a-i}vt^{b+i}-(-1)^avt^{a+b-i}ut^{i}),\label{241}\\
|0\rangle t^{-1}-1.\label{242}
\end{gather}
We call $\mathcal{U}(V)$ the universal enveloping algebra of $V$.

We denote the universal affine vertex algebra associated with a finite dimensional Lie algebra $\mathfrak{g}$ and its inner product $\kappa$ by $V^\kappa(\mathfrak{g})$. By the PBW theorem, we can identify $V^\kappa(\mathfrak{g})$ with $U(t^{-1}\mathfrak{g}[t^{-1}])$. In order to simplify the notation, here after, we denote the generating field $(ut^{-1})(z)$ as $u(z)$. By the definition of $V^\kappa(\mathfrak{g})$, the generating fields $u(z)$ and $v(z)$ satisfy the OPE
\begin{gather}
u(z)v(w)\sim\dfrac{[u,v](w)}{z-w}+\dfrac{\kappa(u,v)}{(z-w)^2}\label{OPE1}
\end{gather}
for all $u,v\in\mathfrak{g}$. 
Let us set an inner product on $\mathfrak{gl}(m)\subset\mathfrak{gl}(m+n)$ by
\begin{equation*}
\kappa(E_{i,j},E_{p,q})=\delta_{i,q}\delta_{p,j}\tilde{c}+\delta_{i,j}\delta_{p,q}.
\end{equation*}
Then, we find that $\mathcal{U}(V^\kappa(\mathfrak{gl}(m+n)))$ and  $\mathcal{U}(V^\kappa(\mathfrak{gl}(n)))$ coincide with $\mathcal{U}(\widehat{\mathfrak{gl}}(m+n)))$ and $\mathcal{U}(\widehat{\mathfrak{gl}}(n)))$. We denote $E_{i,j}t^{-s}\in U(t^{-1}\mathfrak{gl}(n)[t^{-1}])=V^\kappa(\mathfrak{gl}(n))$ by $E_{i,j}[-s]$.

For a vertex algebra $A$ and its vertex subalgebra $B$, we set a coset vertex algebra of the pair $(A,B)$ as follows:
\begin{align*}
C(A,B)=\{v\in A\mid w_{(r)}v=0\text{ for }w\in B\text{ and }r\geq0\}.
\end{align*}
\begin{Theorem}
The homomorphism $\ev^{m+n}_{\hbar,\ve}\circ\Psi_2$ induces the homomorphism
\begin{equation*}
\ev^{m+n}_{\hbar,\ve}\circ\Psi_2\colon Y_{\hbar,\ve+n\hbar}(\widehat{\mathfrak{sl}}(m))\to \mathcal{U}(C(V^\kappa(\mathfrak{gl}(m+n)),V^\kappa(\mathfrak{sl}(n)))).
\end{equation*}
\end{Theorem}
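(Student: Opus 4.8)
The plan is to show that the homomorphism $\ev^{m+n}_{\hbar,\ve}\circ\Psi_2\colon Y_{\hbar,\ve+n\hbar}(\widehat{\mathfrak{sl}}(m))\to\mathcal{U}(\widehat{\mathfrak{gl}}(m+n))$ supplied by the preceding Corollary factors through the natural homomorphism $\iota\colon\mathcal{U}(C(A,B))\to\mathcal{U}(A)$ induced by the vertex subalgebra inclusion $C(A,B)\hookrightarrow A$, where I abbreviate $A=V^\kappa(\mathfrak{gl}(m+n))$ and $B=V^\kappa(\mathfrak{sl}(n))$ and use $\mathcal{U}(A)=\mathcal{U}(\widehat{\mathfrak{gl}}(m+n))$. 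Since the image of $\iota$ is a degreewise completed subalgebra and, as recalled in the proof of Theorem~\ref{Main-theorem}, $Y_{\hbar,\ve+n\hbar}(\widehat{\mathfrak{sl}}(m))$ is generated by the $X^\pm_{i,0}$ and $\widetilde{H}_{i,1}$, it suffices to exhibit for each such generator $g$ an element of $\mathcal{U}(C(A,B))$ whose image under $\iota$ is $\ev^{m+n}_{\hbar,\ve}\circ\Psi_2(g)$. The conceptual bridge is the mode commutator formula $[u_{(a)},v_{(b)}]=\sum_{r\geq0}\binom{a}{r}(u_{(r)}v)_{(a+b-r)}$ coming from \eqref{241}: if $v\in C(A,B)$ then $w_{(r)}v=0$ for all $w\in\mathfrak{sl}(n)$ and $r\geq0$, so every mode $v_{(b)}$ commutes with the image of $\widehat{\mathfrak{sl}}(n)$ in $\mathcal{U}(A)$, which is consistent with the commutativity already established in the Corollary and in Theorem~\ref{Main-theorem}.

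First I would dispose of the degree-zero generators. By the definitions of $\Psi_2$ and of $\ev^{m+n}_{\hbar,\ve}$ in Theorem~\ref{thm:main}, the elements $\ev^{m+n}_{\hbar,\ve}\circ\Psi_2(X^\pm_{i,0})$ are single modes of the matrix units $E_{a,b}$ with $n<a,b\leq m+n$ (for $i\neq0$ the level-zero modes $E_{n+i,n+i+1}$ and $E_{n+i+1,n+i}$; for $i=0$ the modes $E_{m+n,n+1}t$ and $E_{n+1,m+n}t^{-1}$). For any such $E_{a,b}$ and any $w=E_{c,d}$ with $c,d\leq n$ one has $[E_{c,d},E_{a,b}]=0$ and $\kappa(E_{c,d},E_{a,b})=0$, so by the OPE \eqref{OPE1} we get $w_{(r)}E_{a,b}=0$ for all $r\geq0$. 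Hence $E_{a,b}\in C(A,B)$ and its modes lie in $\mathcal{U}(C(A,B))$.

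The substantial step is $\widetilde{H}_{i,1}$. Here I would compute $\ev^{m+n}_{\hbar,\ve}\circ\Psi_2(\widetilde{H}_{i,1})$ explicitly from Theorem~\ref{thm:main} and from the formula for $\Psi_2(\widetilde{H}_{i,1})$; the result is a sum of bilinear current terms together with linear and constant corrections. I would then recognize this element as a fixed mode of an explicit quadratic vector $G_i\in A$ built from normally ordered products in which the $\mathfrak{sl}(n)$-indices are fully contracted, schematically $G_i=\sum_{k=1}^{n}{:}E_{n+i,k}E_{k,n+i}{:}-\big(i\mapsto i+1\big)$ plus purely $\mathfrak{gl}(m)$-block quadratic terms and the linear terms supplied by $\ev^{m+n}_{\hbar,\ve}(\widetilde{H}_{i+n,1})$. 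The index contraction makes the cross terms in $w_{(0)}G_i$ cancel for every $w\in\mathfrak{sl}(n)$, while the vanishing of $\kappa$ between the $\mathfrak{sl}(n)$-block and the complementary indices forces $w_{(r)}G_i=0$ for $r\geq1$; thus $G_i\in C(A,B)$. Comparing mode expansions in the completion $\mathcal{U}$, where the infinite sums $\sum_{s\geq0}$ converge degreewise, then identifies $\ev^{m+n}_{\hbar,\ve}\circ\Psi_2(\widetilde{H}_{i,1})$ with the appropriate mode of $G_i$ modulo modes of the degree-zero $\mathfrak{gl}(m)$-currents already placed in $\mathcal{U}(C(A,B))$.

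The hard part will be exactly this identification for $\widetilde{H}_{i,1}$: pinning down the correct quadratic vector $G_i$ together with its normal-ordering and $\kappa$-dependent constants, verifying the coset condition $w_{(r)}G_i=0$ for $r=0,1$ term by term, and matching its mode against the explicit expression for $\ev^{m+n}_{\hbar,\ve}\circ\Psi_2(\widetilde{H}_{i,1})$ inside the completion. The commutativity with $\widehat{\mathfrak{gl}}(n)\supseteq\widehat{\mathfrak{sl}}(n)$ proved in the Corollary serves as a strong consistency check and streamlines the $w_{(0)}$-cancellations, but the normal-ordering bookkeeping and the convergence of the completed sums are where the genuine care is needed. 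Once every generator is handled, the factorization through $\iota$ follows because its image is a subalgebra, and this yields the asserted homomorphism into $\mathcal{U}(C(V^\kappa(\mathfrak{gl}(m+n)),V^\kappa(\mathfrak{sl}(n))))$.
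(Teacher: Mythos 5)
Your proposal follows essentially the same route as the paper: the degree-zero generators are handled by noting that the matrix units $E_{a,b}$ with $a,b\geq n+1$ lie in the coset, and the images of the degree-one generators are reduced to checking that the $\mathfrak{sl}(n)$-contracted quadratic vector (your $G_i$, the paper's $\sum_{u=1}^n(E_{u,i}[-1])_{(-1)}E_{j,u}[-1]$ for $i,j\geq n+1$) satisfies $x_{(r)}(\cdot)=0$ for $x\in\mathfrak{sl}(n)$, $r\geq0$, which the paper likewise verifies by direct computation. The approach and the key intermediate element coincide, so the proposal is correct.
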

\begin{proof}
By the definition of the universal affine vertex algebra and coset, we find that $E_{i,j}[-1]$ is contained in $C(V^\kappa(\mathfrak{gl}(m+n)),V^\kappa(\mathfrak{sl}(n)))$ for $i,j\geq n+1$. Thus, the image of $\ev^{m+n}_{\hbar,\ve}\circ\Psi_2$ is contained in $\mathcal{U}(C(V^\kappa(\mathfrak{gl}(m+n)),V^\kappa(\mathfrak{sl}(n))))$ if $\sum_{u=1}^n(E_{u,i}[-1])_{(-1)}E_{j,u}[-1]$ is contained in $C(V^\kappa(\mathfrak{gl}(m+n)),V^\kappa(\mathfrak{sl}(n)))$ for $i,j\geq n+1$. By a direct compuation, we obtain
\begin{gather*}
x_{(r)}(\sum_{u=1}^n(E_{u,i}[-1])_{(-1)}E_{j,u}[-1])=0\text{ if }x\in\mathfrak{sl}(n)\text{ and }r\geq0.
\end{gather*}
Then, we find that $\sum_{u=1}^n(E_{u,i}[-1])_{(-1)}E_{j,u}[-1]$ is contained in $C(V^\kappa(\mathfrak{gl}(m+n)),V^\kappa(\mathfrak{sl}(n)))$ for $i,j\geq n+1$.
\end{proof}
\section{Application to the rectangular $W$-algebra}
The $W$-algebra $\mathcal{W}^k(\mathfrak{g},f)$ is a vertex algebra associated with a finite dimensional reductive Lie algebra $\mathfrak{g}$ and a nilpotent element $f$. We call the $W$-algebra associated with $\mathfrak{gl}(ln)$ and a nilpotent element of type $(l^n)$ the rectangular $W$-algebra and denote it by $\mathcal{W}^k(\mathfrak{gl}(ln),(l^n))$.
In this article, we only consider the case that $l=2$. The nilpotent element is
\begin{equation*}
f=\sum_{u=1}^n\limits E_{n+u,u}\in\mathfrak{gl}(2n).
\end{equation*}
We set the inner product on $\mathfrak{gl}(n)$ by
\begin{equation*}
\kappa(E_{i,j},E_{p,q})=\delta_{j,p}\delta_{i,q}\alpha+\delta_{i,j}\delta_{p,q},
\end{equation*}
where $\alpha=k+n$. 

By Theorem 3.1 and Corollary 3.2 in \cite{AM}, we obtain the following theorem.
\begin{Theorem}[Corollary 5.2 in \cite{Genra}, Theorem 3.1 and Corollary 3.2 in \cite{AM} and Section 4 in \cite{U8}]\label{AM}

\textup{(1)}\ 
The rectangular $W$-algebra $\mathcal{W}^k(\mathfrak{gl}(2n),(2^n))$ can be realized as a vertex subalgebra of $V^{\kappa}(\mathfrak{gl}(n))^{\otimes 2}$.

\textup{(2)}\ The $W$-algebra $\mathcal{W}^k(\mathfrak{g},f)$ has the following strong generators:
\begin{align*}
W^{(1)}_{i,j}&=E^{(1)}_{i,j}[-1]+E^{(2)}_{i,j}[-1],\\
W^{(2)}_{i,j}&=\sum_{1\leq u\leq n}\limits E^{(1)}_{u,j}[-1]E^{(2)}_{i,u}[-1]+\alpha E^{(2)}_{i,j}[-1]
\end{align*} 
for $1\leq i,j\leq n$, where $E^{(1)}_{i,j}[-1]=E_{i,j}[-1]\otimes 1\in V^{\kappa}(\mathfrak{gl}(n))^{\otimes 2}$ and $E^{(2)}_{i,j}[-1]=1\otimes E_{i,j}[-1]\in V^{\kappa}(\mathfrak{gl}(n))^{\otimes 2}$.

\textup{(3)}\ There exists the embedding determined by
\begin{equation*}
\iota\colon\mathcal{W}^{k+m}(\mathfrak{gl}(2n),(2^n))\to \mathcal{W}^{k}(\mathfrak{gl}(2m+2n),(2^{m+n})),\ W^{(u)}_{i,j}\mapsto W^{(u)}_{i,j}.
\end{equation*}

\textup{(4)}\ The $W$-algebra $\mathcal{W}^{k}(\mathfrak{gl}(2m+2n),(2^{m+n}))$ has a subalgebra isomorphic to $\mathcal{W}^{k}(\mathfrak{gl}(2n),(2^n))$.
\end{Theorem}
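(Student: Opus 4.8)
The plan is to establish the four assertions through the explicit Miura (free-field) realization of the rectangular $W$-algebra, proving (1) and (2) first and then reading off the embeddings (3) and (4) from the explicit shape of the strong generators. Since the statement is assembled from \cite{Genra}, \cite{AM} and Section~4 of \cite{U8}, I would organize the argument so that each part reduces to a finite OPE verification rather than to the full cohomological construction of the $W$-algebra.

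For (1) and (2) I would begin from the good even grading attached to $f=\sum_{u=1}^n E_{n+u,u}$ of type $(2^n)$ on $\mathfrak{gl}(2n)$, whose degree-zero component consists of the two diagonal blocks and is a copy of $\mathfrak{gl}(n)\oplus\mathfrak{gl}(n)$. The Miura map then realizes $\mathcal{W}^k(\mathfrak{gl}(2n),(2^n))$ as a vertex subalgebra of $V^{\kappa}(\mathfrak{gl}(n))^{\otimes 2}$, which is (1). For (2) I would check by direct computation that $W^{(1)}_{i,j}$ satisfies the affine $\mathfrak{gl}(n)$ OPE and that the quadratic fields $W^{(2)}_{i,j}$ close on $\{W^{(1)},W^{(2)}\}$ with structure constants governed only by $\alpha=k+n$, so that these $2n^2$ fields strongly generate the image; this matches the abstract presentation and is the content recalled from \cite{Genra} and \cite{AM}.

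The decisive point for (3) is the arithmetic identity $(k+m)+n=k+(m+n)$, which says that the parameter $\alpha$ of $\mathcal{W}^{k+m}(\mathfrak{gl}(2n),(2^n))$ coincides with the parameter $\alpha$ of $\mathcal{W}^{k}(\mathfrak{gl}(2m+2n),(2^{m+n}))$. Using the block inclusion $\mathfrak{gl}(n)\hookrightarrow\mathfrak{gl}(m+n)$ into the upper-left corner, I would define $\iota$ on strong generators by $W^{(u)}_{i,j}\mapsto W^{(u)}_{i,j}$ for $1\le i,j\le n$ and verify that it extends to a vertex algebra homomorphism. Because the OPE coefficients of $W^{(1)}$ and $W^{(2)}$ enter only through $\alpha$ and the summation range, the coincidence of $\alpha$ forces the target fields to satisfy the same relations as the source fields; statement (4) is then obtained by the same construction after relabeling the level and selecting the appropriate $n\times n$ block.

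The step I expect to be the main obstacle is the OPE closure in (3). In the larger algebra the generator $W^{(2)}_{i,j}$ with $1\le i,j\le n$ carries the extended defining sum $\sum_{u=1}^{m+n}E^{(1)}_{u,j}[-1]E^{(2)}_{i,u}[-1]$, so I must show that the extra off-block summands $\sum_{u=n+1}^{m+n}E^{(1)}_{u,j}[-1]E^{(2)}_{i,u}[-1]$ produce no new fields in the singular part of $W^{(2)}_{i,j}(z)W^{(2)}_{p,q}(w)$, but are absorbed into the $\alpha$-dependent terms already present in the $\mathfrak{gl}(n)$ computation. This is a finite but delicate normal-ordered OPE calculation, and it is precisely the place where the exact coincidence of levels $(k+m)+n=k+(m+n)$ is indispensable.
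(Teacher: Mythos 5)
Theorem~\ref{AM} is not proved in the paper at all: it is a catalogue of results imported from \cite{Genra} (injectivity of the Miura map), \cite{AM} (the explicit generators and their OPEs) and Section~4 of \cite{U8} (the embedding $\iota$), so the only comparison available is with those sources, and your outline does follow their route. In particular, your identification of the level identity $(k+m)+n=k+(m+n)$ as the decisive point for (3) is exactly right. One refinement: the extra summands $\sum_{u=n+1}^{m+n}E^{(1)}_{u,j}[-1]E^{(2)}_{i,u}[-1]$ are not ``absorbed into the $\alpha$-dependent terms''; rather, every full-range sum produced by the Wick contractions recombines into the \emph{ambient} algebra's own generators $W^{(2)}_{a,b}$ with $a,b\le n$, so that the OPEs of $\{W^{(1)}_{i,j},W^{(2)}_{i,j}\}_{i,j\le n}$ close on this set with structure constants depending only on $\alpha$; the level identity is then what makes those constants agree with the ones of $\mathcal{W}^{k+m}(\mathfrak{gl}(2n),(2^n))$. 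These are two logically independent points and both are needed.

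There are two genuine gaps. First, OPE closure alone does not prove (1)--(3). Closure of the proposed fields under OPE shows that they generate \emph{some} vertex subalgebra of $V^{\kappa}(\mathfrak{gl}(n))^{\otimes 2}$; it does not show that they lie in $\mathcal{W}^k(\mathfrak{gl}(2n),(2^n))$ (i.e.\ in the joint kernel of the screening operators, which is the actual content of \cite{Genra} and \cite{AM}), nor that they \emph{strongly} generate it, which requires the character argument: $\mathcal{W}^k(\mathfrak{g},f)$ is freely strongly generated by fields indexed by a basis of $\mathfrak{g}^f$. This freeness is also what your step (3) uses silently twice: to know that prescribing $\iota$ on generators with matching OPEs extends to a well-defined vertex algebra homomorphism, and to get injectivity (PBW monomials map to linearly independent PBW monomials). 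Injectivity is never addressed in your proposal, yet it is what makes $\iota$ an embedding rather than a mere homomorphism. Second, your treatment of (4) fails as stated: ``the same construction after relabeling'' can only produce subalgebras whose structure constants carry the ambient parameter $\alpha=k+(m+n)$, i.e.\ copies of $\mathcal{W}^{k+m}(\mathfrak{gl}(2n),(2^n))$, never of $\mathcal{W}^{k}(\mathfrak{gl}(2n),(2^n))$, whose parameter is $k+n$; by your own $\alpha$-matching principle, a generator-to-generator copy at level $k$ is impossible. The literal statement (4) is presumably a misprint for $k+m$ — that is the version actually used later (cf.\ Corollary~\ref{Corr}) — but your argument, as written, does not prove (4) in the form stated.
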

In \cite{U4} Theorem~5.1, the author constructed a surjective homomorphism from the affine (super) Yangian to the universal enveloping algebra of a rectangular $W$-(super)algebra.

\end{Definition}
\begin{Theorem}[Theorem 5.1 in \cite{U4} and Theorem 9.2 in \cite{KU}]\label{Maim}
\begin{enumerate}
\item
Suppose that $\ve=\hbar\alpha$.
There exists an algebra homomorphism 
\begin{equation*}
\Phi^{n}\colon Y_{\hbar,\ve}(\widehat{\mathfrak{sl}}(n))\to \mathcal{U}(\mathcal{W}^{k}(\mathfrak{gl}(2n),(2^{n})))
\end{equation*} 
determined by
\begin{gather*}
\Phi^n(H_{i,0})=\begin{cases}
W^{(1)}_{n,n}-W^{(1)}_{1,1}+2\alpha&\text{ if }i=0,\\
W^{(1)}_{i,i}-W^{(1)}_{i+1,i+1}&\text{ if }i\neq 0,
\end{cases}\\
\Phi^n(X^+_{i,0})=\begin{cases}
W^{(1)}_{n,1}t&\text{ if }i=0,\\
W^{(1)}_{i,i+1}&\text{ if }i\neq 0,
\end{cases}
\quad \Phi^n(X^-_{i,0})=\begin{cases}
W^{(1)}_{1,n}t^{-1}&\text{ if }i=0,\\
W^{(1)}_{i+1,i}&\text{ if }i\neq 0,
\end{cases}
\end{gather*}
and
\begin{align*}
\Phi^n(H_{i,1})&=
-\hbar W^{(2)}_{i,i}t+\hbar W^{(2)}_{i+1,i+1}t+\dfrac{i}{2}\hbar\Phi^n(H_{i,0})-\hbar W^{(1)}_{i,i}W^{(1)}_{i+1,i+1}\\
&\quad+\hbar \displaystyle\sum_{s \geq 0}  \limits\displaystyle\sum_{u=1}^{i}\limits W^{(1)}_{i,u}t^{-s}W^{(1)}_{u,i}t^s+\hbar\displaystyle\sum_{s \geq 0} \limits\displaystyle\sum_{u=i+1}^{n}\limits  W^{(1)}_{i,u}t^{-s-1} W^{(1)}_{u,i}t^{s+1}\\
&\quad-\hbar\displaystyle\sum_{s \geq 0}\limits\displaystyle\sum_{u=1}^{i}\limits W^{(1)}_{i+1,u}t^{-s} W^{(1)}_{u,i+1}t^s-\hbar\displaystyle\sum_{s \geq 0}\limits\displaystyle\sum_{u=i+1}^{n} \limits W^{(1)}_{i+1,u}t^{-s-1} W^{(1)}_{u,i+1}t^{s+1}
\end{align*}
for $i\neq 0$.
\item In the case that $\ve\neq0$, the image of $\Phi^n$ is dense in $\mathcal{U}(\mathcal{W}^{k}(\mathfrak{gl}(2n),(2^{n})))$.
\end{enumerate}
\end{Theorem}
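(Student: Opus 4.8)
The plan is to reduce both parts to computations with the modes of the strong generators $W^{(1)}_{i,j}$ and $W^{(2)}_{i,j}$ inside $\mathcal{U}(\mathcal{W}^{k}(\mathfrak{gl}(2n),(2^{n})))$. By Theorem~\ref{AM}, the rectangular $W$-algebra embeds into $V^{\kappa}(\mathfrak{gl}(n))^{\otimes 2}$ and is strongly generated by $W^{(1)}_{i,j}$ and $W^{(2)}_{i,j}$, so their modes $W^{(u)}_{i,j}t^{s}$ topologically span $\mathcal{U}(\mathcal{W}^{k})$. First I would record, once and for all, the commutators among these modes: starting from the free-field OPE \eqref{OPE1} of $V^{\kappa}(\mathfrak{gl}(n))^{\otimes 2}$ and the explicit formulas for $W^{(1)}_{i,j}$ and $W^{(2)}_{i,j}$, one checks that the $W^{(1)}_{i,j}$ realize an affine $\widehat{\mathfrak{gl}}(n)$-current algebra (with the central terms coming from the two tensor factors), that $W^{(2)}_{i,j}$ transforms under the $W^{(1)}$-currents as an adjoint tensor, and that the self-OPE of $W^{(2)}$ is quadratic with structure constants involving the parameter $\alpha=k+n$.

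With this table of commutators in hand, part (1) is the verification that the prescribed images satisfy each defining relation \eqref{Eq2.1}--\eqref{Eq2.10}. The relations among the degree-zero generators \eqref{Eq2.2}, \eqref{Eq2.4} and the Serre relations \eqref{Eq2.10} follow because $\Phi^{n}(X^{\pm}_{i,0})$ and $\Phi^{n}(H_{i,0})$ have exactly the shape of $\ev^{n}_{\hbar,\ve}(X^{\pm}_{i,0})$ and $\ev^{n}_{\hbar,\ve}(H_{i,0})$ with each $E_{i,j}$ replaced by $W^{(1)}_{i,j}$; since the $W^{(1)}_{i,j}$ obey the same $\widehat{\mathfrak{gl}}(n)$-relations as the $E_{i,j}$, these checks are identical to the corresponding ones in Theorem~\ref{thm:main}. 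The genuinely new content lies in \eqref{Eq2.3} and \eqref{Eq2.5}--\eqref{Eq2.9}, whose images are quadratic in the modes and linear in $W^{(2)}$; here the hypothesis $\ve=\hbar\alpha$ enters essentially, since the constant $\ve+\tfrac{n}{2}\hbar$ appearing in \eqref{Eq2.6}, \eqref{Eq2.7}, \eqref{Eq2.9} must be produced by the parameter $\alpha$ built into $W^{(2)}$ together with the $\tfrac{n}{2}\hbar$-shift arising from normal ordering.

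The hard part will be two computations, exactly mirroring the two representative cases worked out for $\Psi$ in Section~3. The first is the level-one Cartan relation from \eqref{Eq2.1}, namely $[\Phi^{n}(\widetilde{H}_{i,1}),\Phi^{n}(\widetilde{H}_{j,1})]=0$: as in the compatibility with \eqref{Eq2.1} proved for Theorem~\ref{Main}, this demands cancelling a large collection of normal-ordered double sums of $W^{(1)}$-modes against one another, and it is the lengthiest bookkeeping in the argument. The second is the affine-node quadratic relation \eqref{Eq2.9} (and likewise \eqref{Eq2.6}, \eqref{Eq2.7}) for $(i,j)=(0,n-1),(n-1,0)$: because $\Phi^{n}(X^{+}_{0,0})=W^{(1)}_{n,1}t$ carries an extra factor of $t$, commuting it against the level-one images shifts the loop degree and exposes finitely many anomalous terms on top of the convergent tails $\sum_{s\geq0}W^{(1)}_{i,u}t^{-s-1}W^{(1)}_{u,i}t^{s+1}$, and reproducing the exact coefficient $\ve+\tfrac{n}{2}\hbar$ is the delicate step. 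I would organize these affine checks, as in Section~2, through the automorphisms $\tau_{i}$ and the elements $J(h_{i})$, so that the affine-node relations are deduced from the non-affine ones already verified.

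For part (2), density when $\ve\neq0$, I would run a leading-term argument with respect to the loop grading. The images of $H_{i,0},X^{\pm}_{i,0}$ and their iterated commutators place the non-diagonal modes $W^{(1)}_{i,j}t^{s}$ in the closure of the image; the commutators $[W^{(1)}_{i,j}t^{a},W^{(1)}_{j,i}t^{b}]$ then recover the loop-Cartan modes, and here $\ve\neq0$ (equivalently $\alpha\neq0$) is what makes the central pairing nondegenerate so that the whole of $\widehat{\mathfrak{gl}}(n)$ is reached, just as in the density statement of Theorem~\ref{thm:main}. Finally, the formula for $\Phi^{n}(H_{i,1})$ expresses the combination $W^{(2)}_{i,i}t-W^{(2)}_{i+1,i+1}t$, with nonzero coefficient $\pm\hbar$, modulo products of $W^{(1)}$-modes that already lie in the closure of the image; acting by the recovered $\widehat{\mathfrak{gl}}(n)$-currents and using the loop grading then yields every mode of $W^{(2)}$. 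Since $W^{(1)}_{i,j}$ and $W^{(2)}_{i,j}$ strongly generate $\mathcal{W}^{k}(\mathfrak{gl}(2n),(2^{n}))$ by Theorem~\ref{AM}, their modes topologically generate $\mathcal{U}(\mathcal{W}^{k})$, and density follows.
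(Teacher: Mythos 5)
This statement is not proved in the paper at all: it is imported verbatim, with attribution, from Theorem 5.1 of \cite{U4} and Theorem 9.2 of \cite{KU}, so there is no internal proof to measure your proposal against. Judged on its own terms, your outline does follow the same broad strategy as the cited work: realize $\mathcal{W}^k(\mathfrak{gl}(2n),(2^n))$ inside $V^{\kappa}(\mathfrak{gl}(n))^{\otimes 2}$ via Theorem~\ref{AM}, compute the commutators of the modes of $W^{(1)}_{i,j}$ and $W^{(2)}_{i,j}$ from \eqref{OPE1}, verify the relations \eqref{Eq2.1}--\eqref{Eq2.10} on the proposed images, and deduce density from strong generation. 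Your observations that the degree-zero checks reduce to the $\widehat{\mathfrak{gl}}(n)$-current relations satisfied by the $W^{(1)}_{i,j}$ (at the level matching the constant $2\alpha$ in $\Phi^n(H_{0,0})$), and that the hypothesis $\ve=\hbar\alpha$ must enter through the $\alpha$ built into $W^{(2)}_{i,j}$, are correct.

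As a proof, however, the proposal has concrete gaps. First, the two verifications you yourself flag as ``the hard part'' --- the level-one Cartan commutativity from \eqref{Eq2.1} and the affine-node relations \eqref{Eq2.6}, \eqref{Eq2.7}, \eqref{Eq2.9} with the exact coefficient $\ve+\frac{n}{2}\hbar$ --- are precisely the mathematical content of the theorem, and deferring them leaves it unproved. Second, your plan to organize the affine-node checks ``through the automorphisms $\tau_i$ and the elements $J(h_i)$'' does not transfer to this setting: $\tau_i$, $A_i$, $J(h_i)$ and Lemma~\ref{J} are constructions inside the (completed) affine Yangian, which is the target of $\Psi$ in Theorem~\ref{Main}, whereas the target here is $\mathcal{U}(\mathcal{W}^{k}(\mathfrak{gl}(2n),(2^{n})))$. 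To use that machinery you would first have to define $W$-algebra analogues of the $A_i$ and prove an analogue of Lemma~\ref{J} by direct mode computation, which is exactly the work being avoided; nor can you push $J(h_i)$ forward through $\Phi^n$, since $\Phi^n$ being a homomorphism is what is to be proved. Third, in the density argument the real role of $\ve\neq 0$ is not an abstract ``nondegeneracy of the central pairing'': it is that a nonzero central term in the commutators lets one extract individual modes from the infinite normally ordered tails $\sum_{s\geq0}W^{(1)}_{i,u}t^{-s-1}W^{(1)}_{u,i}t^{s+1}$ occurring in $\Phi^n(H_{i,1})$. Without spelling out that extraction, and without explaining how all modes of $W^{(2)}_{i,j}$ (not merely the differences $W^{(2)}_{i,i}t-W^{(2)}_{i+1,i+1}t$ and their $\mathfrak{sl}(n)$-adjoint orbit) land in the closure of the image, the final claim that the image is dense in $\mathcal{U}(\mathcal{W}^{k}(\mathfrak{gl}(2n),(2^{n})))$ is unsubstantiated.
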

By the definition of $\Phi^n$, we obtain the following relation.
\begin{Theorem}[Theorem 5.6 in \cite{U8}]\label{u8}
Suppose that $\ve=-(k+(n+m))\hbar$. We obtain the following relation:
\begin{equation*}
\Phi^{m+n}\circ\Psi_1=\iota\circ\Phi^n.
\end{equation*}
\end{Theorem}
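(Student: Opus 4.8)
The plan is to prove the identity by checking that the two algebra homomorphisms $\Phi^{m+n}\circ\Psi_1$ and $\iota\circ\Phi^n$, both of which go from $Y_{\hbar,\ve}(\widehat{\mathfrak{sl}}(n))$ into $\mathcal{U}(\mathcal{W}^{k}(\mathfrak{gl}(2m+2n),(2^{m+n})))$, agree on a generating set. As recalled in the proof of Theorem~\ref{Main-theorem}, the affine Yangian is generated by the degree-zero elements $X^\pm_{i,0}$ $(0\le i\le n-1)$ together with a single level-one element $\widetilde{H}_{i,1}$ with $i\neq0$, so it suffices to match the two maps on these generators. First I would record why the hypothesis on $\ve$ is the right normalization: in order for $\iota\circ\Phi^n$ to be defined, $\Phi^n$ must land in $\mathcal{U}(\mathcal{W}^{k+m}(\mathfrak{gl}(2n),(2^n)))$, whose structure constant is $\alpha=(k+m)+n$, while the target $\mathcal{W}^{k}(\mathfrak{gl}(2m+2n),(2^{m+n}))$ of $\Phi^{m+n}$ has structure constant $\alpha=k+(m+n)$. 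These coincide, $\alpha=k+m+n$, and by Theorem~\ref{Maim} this common value forces the same spectral parameter $\ve$ on both composites, namely the value in the hypothesis. I would also note that $\Phi^{m+n}$ is degreewise and therefore extends to $\widetilde{Y}_{\hbar,\ve}(\widehat{\mathfrak{sl}}(m+n))$, with the infinite sums in $\Psi_1$ converging in $\mathcal{U}(\mathcal{W}^{k})$.

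For the degree-zero generators the verification is immediate. By the definition of $\Psi_1$ and Theorem~\ref{Maim}, $\Phi^{m+n}(\Psi_1(X^+_{i,0}))$ equals $W^{(1)}_{i,i+1}$ for $i\neq0$ and $W^{(1)}_{n,1}t$ for $i=0$; on the other hand $\iota(\Phi^n(X^+_{i,0}))=\iota(W^{(1)}_{i,i+1})=W^{(1)}_{i,i+1}$, respectively $\iota(W^{(1)}_{n,1}t)=W^{(1)}_{n,1}t$, since $\iota$ fixes each strong generator $W^{(u)}_{a,b}$ by Theorem~\ref{AM}. The same matching holds for $X^-_{i,0}$ and for $H_{i,0}$.

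The substance is the compatibility on $\widetilde{H}_{i,1}$ (equivalently $H_{i,1}$) for $i\neq0$. The key lemma is that $\Phi^{m+n}$ restricts on the natural image of $U(\widehat{\mathfrak{sl}}(m+n))$ to the assignment $E_{a,b}t^s\mapsto W^{(1)}_{a,b}t^s$ for $a\neq b$, which holds because the $W^{(1)}_{a,b}$ span a copy of $\widehat{\mathfrak{gl}}(m+n)$ inside the $W$-algebra and $\Phi^{m+n}$ is an algebra homomorphism matching the Chevalley generators. Since the correction terms of $\Psi_1(H_{i,1})$ involve only off-diagonal matrix units $E_{i,k},E_{k,i},E_{i+1,k},E_{k,i+1}$ with $i,i+1\le n<k$, applying $\Phi^{m+n}$ replaces each $E_{a,b}t^s$ by $W^{(1)}_{a,b}t^s$. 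I would then expand both $\Phi^{m+n}(\Psi_1(H_{i,1}))$ and $\iota(\Phi^n(H_{i,1}))$ with the explicit formula of Theorem~\ref{Maim}. All range-independent terms, namely $-\hbar W^{(2)}_{i,i}t+\hbar W^{(2)}_{i+1,i+1}t$, $\frac{i}{2}\hbar(W^{(1)}_{i,i}-W^{(1)}_{i+1,i+1})$, $-\hbar W^{(1)}_{i,i}W^{(1)}_{i+1,i+1}$, and the sums over $u=1,\dots,i$, are literally the same on both sides, where it is essential that the two values of $\alpha$ coincide so that $\iota$ sends $W^{(2)}_{a,b}$ to the very element occurring in the $\Phi^{m+n}$ formula. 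Consequently
\[
\Phi^{m+n}(H_{i,1})-\iota(\Phi^n(H_{i,1}))=\hbar\sum_{s\ge0}\sum_{u=n+1}^{m+n}\bigl(W^{(1)}_{i,u}t^{-s-1}W^{(1)}_{u,i}t^{s+1}-W^{(1)}_{i+1,u}t^{-s-1}W^{(1)}_{u,i+1}t^{s+1}\bigr),
\]
which is exactly the negative of $\Phi^{m+n}$ applied to the two correction sums of $\Psi_1(H_{i,1})$; the two contributions cancel and the desired equality follows.

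The main obstacle will be the degree-one bookkeeping together with the care demanded by the completions. One must confirm that the summation ranges in the $\Phi^{m+n}(H_{i,1})$ formula (over $u=i+1,\dots,m+n$) exceed those in $\iota(\Phi^n(H_{i,1}))$ (over $u=i+1,\dots,n$) by precisely the block $u=n+1,\dots,m+n$, and that this excess is cancelled termwise by the image of the correction sums of $\Psi_1$, all inside the degreewise completion $\mathcal{U}(\mathcal{W}^{k})$ where products such as $W^{(1)}_{a,b}t^{-s-1}W^{(1)}_{b,a}t^{s+1}$ are interpreted via the relation \eqref{241}. The delicate point is verifying that $\Phi^{m+n}$ genuinely intertwines the matrix-unit products with the corresponding ordered products of $W^{(1)}$'s, not merely the single generators; this is exactly the content of the key lemma above, and once it is in place the remaining cancellation is a routine matching of indices.
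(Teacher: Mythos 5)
Your proposal is correct and takes essentially the same route as the paper, which offers no proof beyond the remark that the identity follows ``by the definition of $\Phi^n$'' (citing Theorem 5.6 of \cite{U8}): one checks the two homomorphisms on the generators $X^\pm_{i,0}$ and $H_{i,1}$, where the degree-zero generators match via the identification $E_{a,b}t^s\mapsto W^{(1)}_{a,b}t^s$ and the block $u=n+1,\dots,m+n$ in $\Phi^{m+n}(H_{i,1})$ cancels against the image under $\Phi^{m+n}$ of the correction sums of $\Psi_1(H_{i,1})$, exactly as you carry out. The one point to flag is cosmetic rather than substantive: the hypothesis $\ve=-(k+(n+m))\hbar$ in Theorem~\ref{u8} differs in sign from the condition $\ve=\hbar\alpha$ of Theorem~\ref{Maim} (an inconsistency of conventions internal to the paper), so your parameter-matching argument is right in substance even though ``the value in the hypothesis'' is stated with the opposite sign.
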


We can consider the coset vertex algebra $C(\mathcal{W}^k(\mathfrak{gl}(2m+2n),(2^{m+n}),\mathcal{W}^{k+m}(\mathfrak{sl}(2n),(2^{n})))$ by Theorem~\ref{AM}. Then, Theorem~\ref{Main-theorem} and Theorem~\ref{u8} induce the following corollary.
\begin{Corollary}\label{Corr}
In the case that $\ve\neq0$, we obtain
\begin{equation*}
\Phi^{m+n}_{\hbar,\ve}\circ\Psi_2\colon Y_{\hbar,\ve+n\hbar}(\widehat{\mathfrak{sl}}(m))\to C(\mathcal{U}(\mathcal{W}^k(\mathfrak{gl}(2m+2n),(2^{m+n})),\mathcal{U}(\mathcal{W}^{k+m}(\mathfrak{gl}(2n),(2^{n})))),
\end{equation*}
where
\begin{align*}
&\quad C(\mathcal{U}(\mathcal{W}^k(\mathfrak{gl}(2m+2n),(2^{m+n}))),\mathcal{U}(\mathcal{W}^{k+m}(\mathfrak{gl}(2n),(2^{n}))))\\
&=\{x\in\mathcal{U}(\mathcal{W}^k(\mathfrak{gl}(2m+2n),(2^{m+n})))\mid[x,\mathcal{U}(\mathcal{W}^{k+m}(\mathfrak{gl}(2n),(2^{n})))]=0\}.
\end{align*}
\end{Corollary}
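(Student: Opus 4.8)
The plan is to deduce the corollary from three facts already established: the commutativity of the images of $\Psi_1$ and $\Psi_2$ (Theorem~\ref{Main-theorem}), the intertwining identity $\Phi^{m+n}\circ\Psi_1=\iota\circ\Phi^n$ (Theorem~\ref{u8}), and the density of the image of $\Phi^n$ (Theorem~\ref{Maim}(2)). The underlying idea is that $\Psi_2$ produces elements which, after applying $\Phi^{m+n}$, commute with everything coming from $\Psi_1$; and the $\Psi_1$-side exhausts, densely, exactly the copy of $\mathcal{U}(\mathcal{W}^{k+m}(\mathfrak{gl}(2n),(2^{n})))$ sitting inside $\mathcal{U}(\mathcal{W}^k(\mathfrak{gl}(2m+2n),(2^{m+n})))$ that is being centralized.

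First I would note that $\Phi^{m+n}$ is degreewise and hence extends continuously to the degreewise completion $\widetilde{Y}_{\hbar,\ve}(\widehat{\mathfrak{sl}}(m+n))$, so that $\Phi^{m+n}\circ\Psi_1$ and $\Phi^{m+n}\circ\Psi_2$ are well defined (this is already implicit in Theorem~\ref{u8} and in the statement of the corollary). By Theorem~\ref{Main-theorem}, for all $a\in Y_{\hbar,\ve}(\widehat{\mathfrak{sl}}(n))$ and $b\in Y_{\hbar,\ve+n\hbar}(\widehat{\mathfrak{sl}}(m))$ one has $[\Psi_1(a),\Psi_2(b)]=0$ in $\widetilde{Y}_{\hbar,\ve}(\widehat{\mathfrak{sl}}(m+n))$. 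Applying the homomorphism $\Phi^{m+n}$ gives $[\Phi^{m+n}(\Psi_1(a)),\Phi^{m+n}(\Psi_2(b))]=0$. By Theorem~\ref{u8} (with $\ve=-(k+(n+m))\hbar$), the first factor equals $\iota(\Phi^n(a))$, so $\Phi^{m+n}(\Psi_2(b))$ commutes with $\iota(\Phi^n(a))$ for every $a$.

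Next I would invoke density. By Theorem~\ref{Maim}(2), when $\ve\neq0$ the image of $\Phi^n$ is dense in $\mathcal{U}(\mathcal{W}^{k+m}(\mathfrak{gl}(2n),(2^{n})))$. Since $\iota$ is an embedding of vertex algebras whose induced map on universal enveloping algebras is a continuous (degreewise) homomorphism with image the subalgebra $\mathcal{U}(\mathcal{W}^{k+m}(\mathfrak{gl}(2n),(2^{n})))\subset\mathcal{U}(\mathcal{W}^k(\mathfrak{gl}(2m+2n),(2^{m+n})))$ appearing in the centralizer (Theorem~\ref{AM}(4)), the set $\{\iota(\Phi^n(a))\}$ is dense in that subalgebra. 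Because multiplication, and hence the commutator $[\Phi^{m+n}(\Psi_2(b)),\,\cdot\,]$, is continuous for the degreewise topology, vanishing of this commutator on a dense subset forces it to vanish on the whole subalgebra. Thus $\Phi^{m+n}(\Psi_2(b))$ lies in $C(\mathcal{U}(\mathcal{W}^k(\mathfrak{gl}(2m+2n),(2^{m+n}))),\mathcal{U}(\mathcal{W}^{k+m}(\mathfrak{gl}(2n),(2^{n}))))$ for every $b$, which is the assertion.

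The main obstacle I anticipate is the topological bookkeeping in this density step: one must be sure that commuting with a dense subset really does imply commuting with the closure, which requires the commutator to be continuous in the degreewise-completed topology and requires $\iota$ to carry dense subsets to dense subsets. A secondary point to verify is the compatibility of parameters — the condition $\ve=-(k+(n+m))\hbar$ under which Theorem~\ref{u8} holds must be consistent with the hypotheses ($\ve\neq0$, together with the normalization $\ve=\hbar\alpha$) needed to apply $\Phi^n$, $\Phi^{m+n}$ and the density statement, so that all three cited results are available simultaneously. Once these are secured the conclusion is immediate.
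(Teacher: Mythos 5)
Your proposal is correct and is essentially the paper's own argument: the paper deduces the corollary directly from Theorem~\ref{Main-theorem} (commutativity of the images of $\Psi_1$ and $\Psi_2$) and Theorem~\ref{u8} ($\Phi^{m+n}\circ\Psi_1=\iota\circ\Phi^n$), with the hypothesis $\ve\neq0$ present precisely so that the density statement of Theorem~\ref{Maim}(2) can be used to pass from the image of $\iota\circ\Phi^n$ to all of $\mathcal{U}(\mathcal{W}^{k+m}(\mathfrak{gl}(2n),(2^{n})))$. The continuity and parameter-matching points you flag are exactly the details the paper leaves implicit, and you resolve them correctly.
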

We can also 
\begin{Theorem}
In the case that $\ve\neq0$, the homomorphism induces the homomorphism
\begin{equation*}
\Phi^{m+n}_{\hbar,\ve}\circ\Psi_2\colon Y_{\hbar,\ve+n\hbar}(\widehat{\mathfrak{sl}}(m))\to \mathcal{U}(C(\mathcal{W}^k(\mathfrak{gl}(2m+2n),(2^{m+n})),\mathcal{W}^{k+m}(\mathfrak{sl}(2n),(2^{n})))).
\end{equation*}
\end{Theorem}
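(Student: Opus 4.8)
The plan is to refine Corollary~\ref{Corr}, whose target is only the centralizer $C(\mathcal{U}(\mathcal{W}^k(\mathfrak{gl}(2m+2n),(2^{m+n}))),\mathcal{U}(\mathcal{W}^{k+m}(\mathfrak{gl}(2n),(2^{n}))))$ of enveloping algebras, to the smaller algebra $\mathcal{U}(C(\dots))$. Since $\Phi^{m+n}\circ\Psi_2$ is an algebra homomorphism and, by Definition~\ref{Defi}, $\mathcal{U}(C(A,B))$ is the closed subalgebra of $\mathcal{U}(A)$ generated by the modes $v_{(r)}$ of the coset elements $v\in C(A,B)$, it suffices to exhibit generators of $Y_{\hbar,\ve+n\hbar}(\widehat{\mathfrak{sl}}(m))$ whose images are built from modes of elements lying in the coset $C(\mathcal{W}^{k}(\mathfrak{gl}(2m+2n),(2^{m+n})),\mathcal{W}^{k+m}(\mathfrak{sl}(2n),(2^{n})))$. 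As in the proof of Theorem~\ref{Main-theorem} I take the generators $\widetilde{H}_{j,1}$ and $X^\pm_{l,0}$. Reading off the formulas for $\Phi^{m+n}$ in Theorem~\ref{Maim} and for $\Psi_2$, the images $\Phi^{m+n}(\Psi_2(X^\pm_{l,0}))$ are the second-block generators $W^{(1)}_{i,j}$ with $i,j\in\{n+1,\dots,m+n\}$, while $\Phi^{m+n}(\Psi_2(\widetilde{H}_{j,1}))$ is a combination of the second-block fields $W^{(2)}_{i,i}$, of products of second-block $W^{(1)}_{i,j}$, and of cross terms $\sum_{u=1}^{n}(W^{(1)}_{u,i})_{(-1)}W^{(1)}_{j,u}$ with $i,j\in\{n+1,\dots,m+n\}$ produced by the $Q_j$-part of $\Psi_2$ together with the first-block sums in $\Phi^{m+n}(H_{j+n,1})$. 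Thus the theorem reduces to showing that $W^{(1)}_{i,j}$, $W^{(2)}_{i,j}$ and $\sum_{u=1}^{n}(W^{(1)}_{u,i})_{(-1)}W^{(1)}_{j,u}$ lie in the coset for all $i,j\in\{n+1,\dots,m+n\}$.

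To verify coset membership I would work inside the realization $\mathcal{W}^{k}(\mathfrak{gl}(2m+2n),(2^{m+n}))\subset V^{\kappa}(\mathfrak{gl}(m+n))^{\otimes2}$ of Theorem~\ref{AM}(1), so that every OPE factors through affine OPEs in the two tensor factors. By definition the coset is cut out by $w_{(r)}v=0$ for $r\geq0$ and $w$ a strong generator of $\mathcal{W}^{k+m}(\mathfrak{sl}(2n),(2^{n}))$, i.e.\ a traceless combination of $W^{(1)}_{a,b}$ and $W^{(2)}_{a,b}$ with $1\leq a,b\leq n$. For the linear generators $W^{(1)}_{i,j}=E^{(1)}_{i,j}[-1]+E^{(2)}_{i,j}[-1]$ this is immediate: since the index blocks $\{1,\dots,n\}$ and $\{n+1,\dots,m+n\}$ are disjoint, all commutator contributions vanish, and the only double pole comes from $\kappa(E_{a,a},E_{i,i})=1$ in each factor, which cancels after passing to the traceless combinations $W^{(1)}_{a,a}-W^{(1)}_{b,b}$. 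Likewise the cross terms are handled factorwise exactly as in the affine vertex algebra version proved above, where the computation $x_{(r)}(\sum_{u}(E_{u,i}[-1])_{(-1)}E_{j,u}[-1])=0$ for $x\in\mathfrak{sl}(n)$ shows $(W^{(1)}_{a,b})_{(r)}(\sum_{u=1}^{n}(W^{(1)}_{u,i})_{(-1)}W^{(1)}_{j,u})=0$.

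The genuinely new input, and what I expect to be the main obstacle, is the behaviour under the quadratic generators $W^{(2)}_{a,b}=\sum_{u=1}^{n}E^{(1)}_{u,b}[-1]E^{(2)}_{a,u}[-1]+\alpha E^{(2)}_{a,b}[-1]$. Because these are normally ordered products mixing both tensor factors, the conditions $(W^{(2)}_{a,b})_{(r)}W^{(1)}_{i,j}=0$, $(W^{(2)}_{a,b})_{(r)}W^{(2)}_{i,j}=0$ and $(W^{(2)}_{a,b})_{(r)}(\sum_{u}(W^{(1)}_{u,i})_{(-1)}W^{(1)}_{j,u})=0$ must be checked through the non-commutative Wick formula. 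Here one uses that a contraction is nonzero only when a first-block index $a,b,u\leq n$ meets a second-block index $i,j>n$; each such contraction produces either a commutator with disjoint support (hence zero) or a Kronecker-delta term killed after restricting to the traceless part, and the surviving contributions cancel in pairs. Since $W^{(2)}_{i,j}$ itself contains a sum $\sum_{u}E^{(1)}_{u,j}[-1]E^{(2)}_{i,u}[-1]$ running over all $u$, the careful bookkeeping of these cancellations, rather than any conceptual difficulty, is the heart of the argument.

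Finally, having placed the three families of generators in $C(\mathcal{W}^{k}(\mathfrak{gl}(2m+2n),(2^{m+n})),\mathcal{W}^{k+m}(\mathfrak{sl}(2n),(2^{n})))$, and since the coset is a vertex subalgebra (so normally ordered products and derivatives of coset elements stay in the coset), every element of the image of $\Phi^{m+n}\circ\Psi_2$ is a limit of polynomials in modes of coset elements and hence lies in $\mathcal{U}(C(\dots))$. This is strictly sharper than Corollary~\ref{Corr}, as in general $\mathcal{U}(C(A,B))\subseteq C(\mathcal{U}(A),\mathcal{U}(B))$ need not be an equality; the hypothesis $\ve\neq0$ enters only to make the relevant realizations and the identification $\Phi^{m+n}\circ\Psi_1=\iota\circ\Phi^n$ of Theorem~\ref{u8} available.
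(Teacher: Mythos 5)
Your reduction is the same as the paper's, and your first two steps are fine: it suffices to treat the generators $X^{\pm}_{l,0}$ and $\widetilde{H}_{j,1}$, whose images under $\Phi^{m+n}\circ\Psi_2$ are built from the second-block currents $W^{(1)}_{i,j}$ ($i,j\geq n+1$), the differences $W^{(2)}_{i,i}-W^{(2)}_{i+1,i+1}$ (note that only these differences occur, not arbitrary $W^{(2)}_{i,j}$), and the cross terms $\sum_{u\leq n}(W^{(1)}_{u,i})_{(-1)}W^{(1)}_{j,u}$; and the coset membership of the linear generators $W^{(1)}_{i,j}$ follows from the cancellation of the double pole for traceless first-block combinations, exactly as in the paper. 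The genuine gap is your third paragraph. The vanishing of $(W^{(2)}_{a,b})_{(r)}$ on the quadratic elements is the entire content of the theorem beyond what Corollary~\ref{Corr} already gives, and you do not prove it: ``the surviving contributions cancel in pairs'' is an expectation, not an argument. The non-commutative Wick formula applied to two quadratic fields produces single contractions, double contractions, terms proportional to $\alpha$, and cubic correction terms, and verifying that all of these vanish after restricting to the traceless part is precisely the hard computation; you explicitly defer it (``careful bookkeeping \dots is the heart of the argument''), so the proposal names what must be shown but leaves exactly that unshown.

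The paper avoids this computation entirely, by the route you considered and then set aside. Since by Corollary~\ref{Corr} the image of $\Phi^{m+n}\circ\Psi_2$ lies in the centralizer $C(\mathcal{U}(\mathcal{W}^k(\mathfrak{gl}(2m+2n),(2^{m+n}))),\mathcal{U}(\mathcal{W}^{k+m}(\mathfrak{gl}(2n),(2^{n}))))$, and since the second-block modes $W^{(1)}_{p,q}t^s$ are already known to centralize, the double-commutator identity \eqref{align1234} (an identity in the completed enveloping algebra coming from the OPEs of \cite{U4}) expresses $(W^{(2)}_{i,i}-W^{(2)}_{i+1,i+1})t^{-1}$ as a bracket of centralizer elements plus elements generated by second-block $W^{(1)}$ modes. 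Hence $[xt^r,(W^{(2)}_{i,i}-W^{(2)}_{i+1,i+1})t^{-1}]=0$ for all $x\in\mathcal{W}^{k+m}(\mathfrak{sl}(2n),(2^{n}))$ and all $r\geq0$, and because $[xt^r,vt^{-1}]=\sum_{j\geq 0}\binom{r}{j}(x_{(j)}v)t^{r-1-j}$, this forces $x_{(j)}v=0$ for all $j\geq0$, i.e.\ coset membership — no Wick computation needed. This also explains the hypothesis $\ve\neq0$: it is what makes Corollary~\ref{Corr} (via the density statement in Theorem~\ref{Maim}) available, whereas in your proposed route $\ve\neq0$ plays no role at all, which is a sign that you discarded the tool the proof actually runs on. To repair your proposal, either carry out the Wick-formula cancellations in full, or replace your third paragraph by this bootstrap from Corollary~\ref{Corr}.
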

\begin{proof}
By the definition, we find that $W^{(1)}_{i,j}[-1]$ is contained in the universal enveloping algebra of the coset for $i,j\geq n+1$. The OPEs of $W^{(1)}_{i,j}$ and $\mathcal{W}^{k+m}(\mathfrak{sl}(2n),(2^{n}))$ are non-zero for $i,j\geq n+1$ due to the inner product $(E_{i,i},E_{p,p})=1$ for $i\geq n+1,p\leq n$.
Since $(E_{i,i}-E_{i+1,i+1},E_{p,p}-E_{p+1,p+1})=0$ for $i\geq n+1,p\leq n-1$, we find that
\begin{equation*}
[W^{(1)}_{i,j}t^s,\mathcal{U}(\mathcal{W}^{k+m}(\mathfrak{sl}(2n),(2^{n})))]=0
\end{equation*}
for $i,j\geq n+1$.
Thus, the image of $\Phi^{m+n}_{\hbar,\ve}\circ\Psi_2$ is contained in the coset if $W^{(2)}_{i,i}-W^{(2)}_{i+1,i+1}$ is contained in the coset for $i\geq n+1$. 
By the OPEs of the rectangular $W$-algebra $(\mathcal{W}^k(\mathfrak{gl}(2m+2n),(2^{m+n}))$ given in \cite{U4}, we have
\begin{align}
&\quad[[(W^{(2)}_{i,i}-W^{(2)}_{i+1,i+1})t,W^{(1)}_{i,i+1}t^{-2}],W^{(1)}_{i+1,i}]\nonumber\\
&=(W^{(2)}_{i,i}-W^{(2)}_{i+1,i+1})t^{-1}+\text{the elements generated by $W^{(1)}_{p,q}t^s$ for $p,q\geq n+1$ and $s\in\mathbb{Z}$}.\label{align1234}
\end{align}
By Corollary~\ref{Corr}, the left hand side of \eqref{align1234} is contained in the centralizer of $\mathcal{U}(\mathcal{W}^{k+m}(\mathfrak{gl}(2n),(2^{n}))$ with $\mathcal{U}(\mathcal{W}^k(\mathfrak{gl}(2m+2n),(2^{m+n}))$. Thus, we obtain
\begin{gather*}
[xt^r,(W^{(2)}_{i,i}-W^{(2)}_{i+1,i+1})t^{-1}]=0\text{ if }x\in\mathcal{W}^{k+m}(\mathfrak{sl}(2n),(2^{n})).
\end{gather*}
Then, we find that $W^{(2)}_{i,i}-W^{(2)}_{i+1,i+1}$ is contained in the coset. 
\end{proof}
At the last of this section, we will note the relationship between $\Psi_1,\Psi_2$ and two embeddings of the finite Yangian of type $A$. The Yangian associated with $\mathfrak{sl}(n)$ is the associative algebra whose generators are
\begin{equation*}
\{H_{i,r},X^\pm_{i,r}\mid 1\leq i\leq n-1, r=0,1\}
\end{equation*}
with the defining relations \eqref{Eq2.1}-\eqref{Eq2.5}, \eqref{Eq2.8} and \eqref{Eq2.10}. We denote the Yangian associated with $\mathfrak{sl}(n)$ by $Y_\hbar(\mathfrak{sl}(n))$. The Yangian $Y_\hbar(\mathfrak{sl}(n))$ can be naturally embedded into $Y_{\hbar,\ve}(\widehat{\mathfrak{sl}}(n))$ and we identify $Y_\hbar(\mathfrak{sl}(n))$ with the corresponding subalgebra. By the defining relations, we obtain two embeddings:
\begin{gather*}
\Psi_1^{\text{fin}}\colon Y_\hbar(\mathfrak{sl}(n))\to Y_\hbar(\mathfrak{sl}(m+n)),\ A_{i,r}\mapsto A_{i,r},\\
\Psi_2^{\text{fin}}\colon Y_\hbar(\mathfrak{sl}(m))\to Y_\hbar(\mathfrak{sl}(m+n))A_{i,r}\mapsto A_{i+m,r}
\end{gather*}
for $A=H,X^\pm$. We note that $\Psi_2^{\text{fin}}$ corresponds to the homomorphism $\psi_n$ defined (4.2) in \cite{BK0}. In Section 6 of \cite{U9}, we show that $\Psi_1$ is the affine analogue of $\Psi_1^{\text{fin}}$. Here after, we will show the similar result for $\Psi_2$. 

In \cite{BK}, Brundan-Kleshchev wrote down a finite $W$-algebra of type $A$ as a quotient algebra of the shifted Yangian, which is a subalgebra of the Yangian associated with $\mathfrak{gl}(n)$. Especially, Brundan-Kleshchev's work gave a homomorphism
\begin{gather*}
\Phi^{n,\text{fin}}\colon Y_\hbar(\mathfrak{sl}(n))\to\mathcal{W}^{\text{fin}}(\mathfrak{gl}(2n),(2^n)),
\end{gather*}
where $\mathcal{W}^{\text{fin}}(\mathfrak{gl}(2n),(2^n))$ is a finite $W$-algebra associated with $\mathfrak{gl}(2n)$ and a nilpotent element of type $(2^n)$. In Section 6 of \cite{U4}, we show the relation:
\begin{equation*}
\Phi^{n,\text{fin}}=p\circ\Phi^n_{\hbar,\ve}|_{Y_\hbar(\mathfrak{sl}(n))},
\end{equation*}
where $p$ is a natural projection from $\mathcal{U}(\mathcal{W}^{k}(\mathfrak{gl}(2n),(2^n)))$ to $\mathcal{W}^{\text{fin}}(\mathfrak{gl}(2n),(2^n))$. By the definition of $p$ and a direct computation, we can show that
\begin{equation*}
\Phi^{m+n,\text{fin}}\circ\Psi_2^{\text{fin}}(A_{i,r})=p\circ\Phi^{m+n}\circ\Psi_2(A_{i,r})
\end{equation*}
for $1\leq i\leq m-1$ and $A=H,X^\pm$. Thus, we can consider that $\Psi_2$ is the affine analogue of $\Psi_2^{\text{fin}}$.

\appendix
\section{Some formulas for the proof of Theorem~\ref{Main} and Theorem~\ref{Main-theorem}}
For the proof of Theorem~\ref{Main-theorem}, we note one lemma.
\begin{Lemma}
For $a,b\geq0$, we obtain
\begin{align}
&\quad[E_{j+n,u}t^{-s-a}E_{u,j+n}t^{s+a},E_{i,k}t^{-v-1} E_{k,i}t^{v+1}]\nonumber\\
&=E_{j+n,u}t^{-s-a}(\delta_{i,j+n}E_{u,k}t^{s-v+a-1}-\delta_{k,u}E_{i,j+n}t^{s-v+a-1})E_{k,i}t^{v+1}\nonumber\\
&\quad+(\delta_{u,i}E_{j+n,k}t^{-s-v-a-1}-\delta_{j+n,k}E_{i,u}t^{-s-v-a-1})E_{u,j+n}t^{s+a}E_{k,i}t^{v+1}\nonumber\\
&\quad+E_{i,k}t^{-v-1} E_{j+n,u}t^{-s-a}(\delta_{j+n,k}E_{u,i}t^{s+v+a+1}-\delta_{u,i}E_{k,j+n}t^{s+v+a+1})\nonumber\\
&\quad+E_{i,k}t^{-v-1}(\delta_{u,k}E_{j+n,i}t^{v-s-a+1}-\delta_{i,j+n}E_{k,u}t^{v-s-a+1})E_{u,j+n}t^{s+a},\label{AP-1}\\
&\quad[E_{u,j+n}t^{-s-a}E_{j+n,u}t^{s+a},E_{i,k}t^{-v-1} E_{k,i}t^{v+1}]\nonumber\\
&=E_{u,j+n}t^{-s-a}(\delta_{u,i}E_{j+n,k}t^{s+a-v-1}-\delta_{k,j+n}E_{i,u}t^{s+a-v-1})E_{k,i}t^{v+1}\nonumber\\
&\quad+(\delta_{i,j+n}E_{u,k}t^{-s-v-a-1}-\delta_{k,u}E_{i,j+n}t^{-s-v-a-1})E_{j+n,u}t^{s+a}E_{k,i}t^{v+1}\nonumber\\
&\quad+E_{i,k}t^{-v-1} E_{u,j+n}t^{-s-a}(\delta_{u,k}E_{j+n,i}t^{s+v+a+1}-\delta_{i,j+n}E_{k,u}t^{s+v+a+1})\nonumber\\
&\quad+E_{i,k}t^{-v-1} (\delta_{j+n,k}E_{u,i}t^{v-s-a+1}-\delta_{i,u}E_{k,j+n}t^{v-s-a+1})E_{j+n,u}t^{s+a},\label{AP-2}\\
&\quad[E_{u,i}t^{-s-a}E_{i,u}t^{s+a},E_{l,j+n}t^{-v-b}E_{j+n,l}t^{v+b}]\nonumber\\
&=E_{u,i}t^{-s-a}(\delta_{u,l}E_{i,j+n}t^{s-v+a-b}-\delta_{i,j+n}E_{l,u}t^{s-v+a-b})E_{j+n,l}t^{v+b}\nonumber\\
&\quad+(\delta_{l,i}E_{u,j+n}t^{-s-v-a-b}-\delta_{u,j+n}E_{l,i}t^{-s-v-a-b})E_{i,u}t^{s+a}E_{j+n,l}t^{v+b}\nonumber\\
&\quad+E_{l,j+n}t^{-v-b}E_{u,i}t^{-s-a}(\delta_{u,j+n}E_{i,l}t^{s+v+a+b}-\delta_{i,l}E_{j+n,u}t^{s+v+a+b})\nonumber\\
&\quad+E_{l,j+n}t^{-v-b}(\delta_{i,j+n}E_{u,l}t^{v-s+b-a}-\delta_{u,l}E_{j+n,i}t^{v-s+b-a})E_{i,u}t^{s+a},\label{AQ-1}\\
&\quad[E_{i,u}t^{-s-a}E_{u,i}t^{s+a},E_{l,j+n}t^{-v-b}E_{j+n,l}t^{v+b}]\nonumber\\
&=E_{i,u}t^{-s-a}(\delta_{i,l}E_{u,j+n}t^{s-v+a-b}-\delta_{j+n,u}E_{l,i}t^{s-v+a-b})E_{j+n,l}t^{v+b}\nonumber\\
&\quad+(\delta_{u,l}E_{i,j+n}t^{-s-v-a-b}-\delta_{i,j+n}E_{l,u}t^{-s-v-a-b})E_{u,i}t^{s+a}E_{j+n,l}t^{v+b}\nonumber\\
&\quad+E_{l,j+n}t^{-v-b}E_{i,u}t^{-s-a}(\delta_{i,j+n}E_{u,l}t^{s+v+a+b}-\delta_{u,l}E_{j+n,i}t^{s+v+a+b})\nonumber\\
&\quad+E_{l,j+n}t^{-v-b}(\delta_{u,j+n}E_{i,l}t^{v-s+b-a}-\delta_{i,l}E_{j+n,u}t^{v-s+b-a})E_{u,i}t^{s+a}.\label{AQ-2}
\end{align}
\end{Lemma}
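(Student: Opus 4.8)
The plan is to prove all four identities \eqref{AP-1}--\eqref{AQ-2} by one and the same mechanism, so I would carry out \eqref{AP-1} in full and observe that \eqref{AP-2}, \eqref{AQ-1}, \eqref{AQ-2} follow by the identical argument after relabelling the indices and the exponents. The only inputs are the derivation property of the commutator in the associative (completed enveloping) algebra where these matrix-unit currents live and the elementary relation
\begin{equation*}
[E_{a,b}t^p, E_{c,d}t^q] = \delta_{b,c}E_{a,d}t^{p+q} - \delta_{d,a}E_{c,b}t^{p+q} + \delta_{p+q,0}\,p\,\tr(E_{a,b}E_{c,d})\,\tilde{c},
\end{equation*}
so the whole statement is a structural bookkeeping exercise rather than something requiring a new idea.

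First I would abbreviate the two quadratic elements on the left of \eqref{AP-1} as $xy$ and $zw$, with $x=E_{j+n,u}t^{-s-a}$, $y=E_{u,j+n}t^{s+a}$, $z=E_{i,k}t^{-v-1}$ and $w=E_{k,i}t^{v+1}$. The key is to expand in the specific order
\begin{equation*}
[xy,zw]=[xy,z]\,w+z\,[xy,w]=x[y,z]w+[x,z]yw+zx[y,w]+z[x,w]y,
\end{equation*}
using $[xy,z]=x[y,z]+[x,z]y$ and $[xy,w]=x[y,w]+[x,w]y$. Substituting the elementary relation into each of the four inner brackets $[y,z]$, $[x,z]$, $[y,w]$, $[x,w]$ and simplifying the resulting delta factors reproduces, term by term and in exactly the written factor order, the four lines of \eqref{AP-1}. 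I would emphasize that it is precisely this expansion, rather than $[xy,zw]=x[y,zw]+[x,zw]y$, that lands on the displayed orderings; the alternative expansion leaves the two uncontracted factors in the opposite order and would force extra reordering commutators.

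It then remains to check that the central part of the elementary relation contributes nothing. Because $s,a,v\geq0$, the total exponents of $[x,z]$ and $[y,w]$ are $-s-a-v-1\leq -1$ and $s+a+v+1\geq1$, so neither is ever zero and neither produces a central term. The brackets $[y,z]$ and $[x,w]$ have total exponents $s+a-v-1$ and $-(s+a)+v+1$, which vanish only under the single common condition $s+a=v+1$, and only when the indices coincide as $u=k$, $j+n=i$. Under that condition both central contributions reduce to a scalar multiple of the identical monomial $E_{i,u}t^{-s-a}E_{u,i}t^{s+a}$, with coefficients $+(s+a)\tilde{c}$ (from $[y,z]$, carried through $x[y,z]w$) and $-(s+a)\tilde{c}$ (from $[x,w]$, carried through $z[x,w]y$); hence they cancel and no central term survives. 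The same sign-reversed pairing of the two middle brackets disposes of the central terms in \eqref{AP-2}, \eqref{AQ-1} and \eqref{AQ-2}.

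The main obstacle I anticipate is not conceptual but clerical: correctly matching each of the four delta-contracted terms to the right line of the statement while tracking the signs, the shifts in the powers of $t$, and the order of the three surviving factors. Organizing the computation around the one fixed expansion above, and verifying the central cancellation once, keeps this under control, and the remaining three identities are then obtained by the evident substitutions of indices $(j+n\leftrightarrow u,\ i\leftrightarrow k)$ and exponents $(a,b)$.
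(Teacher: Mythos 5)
Your proposal is correct and takes essentially the same route as the paper, whose proof of this lemma is simply the phrase ``direct computation'': your Leibniz expansion $[xy,zw]=x[y,z]w+[x,z]yw+zx[y,w]+z[x,w]y$ reproduces each displayed line in the stated factor order, and your verification that the central contributions from $[y,z]$ and $[x,w]$ cancel (the only brackets whose total $t$-exponent can vanish) completes the argument.
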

The proof is due to a direct computation.

\section*{Acknowledgement}
The author expresses his sincere thanks to Thomas Creutzig and Nicolas Guay for the helpful discussion.

\section*{Funding}
This work was supported by JSPS Overseas Research Fellowships, Grant Number JP2360303. 
\section*{Data Availability}
The authors confirm that the data supporting the findings of this study are available within the article and its supplementary materials.
\section*{Conflicts of interests/Competing interests}
The authors have no competing interests to declare that are relevant to the content of this article.
\bibliographystyle{plain}
\bibliography{syuu}
\end{document}